\newtheoremstyle{dotless}{}{}{\itshape}{}{\bfseries}{}{ }{}
\theoremstyle{dotless}
\newtheorem{theorem}{Theorem}[section]
\newtheorem{corollary}{Corollary}[section]
\newtheorem{lemma}{Lemma}[section]
\newtheorem{rem}{Remark}[section]
\DeclareMathOperator{\adj}{adj}
\title{ On the $A_{\alpha}$-spectra of some join graphs}
\author{Mainak Basunia\thanks{Department of Mathematics, Indian Institute of Technology Kharagpur, Kharagpur 721302, India. Email: leo28mynnix@gmail.com}\and Iswar Mahato\thanks{Department of Mathematics, Indian Institute of Technology Kharagpur, Kharagpur 721302, India. Email: iswarmahato02@gmail.com} \and M. Rajesh Kannan\thanks{Department of Mathematics, Indian Institute of Technology Kharagpur, Kharagpur 721302, India. Email: rajeshkannan@maths.iitkgp.ac.in, rajeshkannan1.m@gmail.com }}
\date{\today}
\begin{document}
\maketitle
\baselineskip=0.25in

\begin{abstract}\label{sec1}
Let $G$ be a simple, connected graph and let $A(G)$ be the adjacency matrix of $G$. If $D(G)$ is the diagonal matrix of the vertex degrees of $G$, then for every real $\alpha \in [0,1]$, the matrix $A_{\alpha}(G)$ is defined as
$$A_{\alpha}(G) = \alpha D(G) + (1- \alpha) A(G).$$
The eigenvalues of the matrix $A_{\alpha}(G)$ form the $A_{\alpha}$-spectrum of $G$. Let $G_1 \dot{\vee} G_2$, $G_1 \underline{\vee} G_2$, $G_1 \langle \textrm{v} \rangle G_2$ and $G_1 \langle \textrm{e} \rangle G_2$ denote the subdivision-vertex join, subdivision-edge join, $R$-vertex join and $R$-edge join of two graphs $G_1$ and $G_2$, respectively.
In this paper, we compute the $A_{\alpha}$-spectra of $G_1 \dot{\vee} G_2$, $G_1 \underline{\vee} G_2$, $G_1 \langle \textrm{v} \rangle G_2$ and $G_1 \langle \textrm{e} \rangle G_2$ for a regular graph $G_1$ and an arbitrary graph $G_2$ in terms of their $A_{\alpha}$-eigenvalues.  As an application of these results, we construct infinitely many pairs of $A_{\alpha}$-cospectral graphs.
\end{abstract}

{\bf AMS Subject Classification(2010):} 05C50, 05C05

\textbf{Keywords. } $\alpha$-adjacency matrix, $A_{\alpha}$-spectra, Subdivision-vertex join, Subdivision-edge join, $R$-vertex join, $R$-edge join.  

\section{Introduction}\label{sec2}
All graphs considered in this article are simple, undirected and connected. Let $G=(V(G),E(G))$ be a graph with the vertex set $V(G)$ and the edge set $E(G)$. The adjacency matrix of $G$, denoted by $A(G)$, is an $n \times n$ symmetric matrix whose rows and columns are indexed by $V(G)$. The $(i,j)$-th entry of $A(G)$ is $1$, if the vertices $i$ and $j$ are adjacent in $G$, and 0 otherwise. We denote the degree of the vertex $v$ in $G$ by $d_{G}(v)$, and define $D(G)$ to be the $n \times n$ diagonal matrix, whose diagonal entries are the degrees of the vertices of $G$. The Laplacian matrix of $G$, denoted by $L(G)$, is defined as $L(G)=D(G)-A(G)$. The signless Laplacian matrix of $G$, denoted by $Q(G)$, is defined as $Q(G)=D(G)+A(G)$. In \cite{A_Q-merging_by_Nikiforov}, the author introduced 
 a family of  matrices $A_{\alpha}(G)$ as follows:
\begin{equation}\label{eq1}
A_{\alpha}(G) = \alpha D(G) + (1- \alpha) A(G),\qquad \text{for any }\alpha \in [0,1].
\end{equation}
It is clear that $A_{\alpha}(G)$ is equal to the adjacency matrix of $G$ if $\alpha=0$, and is equal to $\frac{1}{2}Q(G)$ if $\alpha=\frac{1}{2}$.

Given an $n \times n$ matrix $M$, let $M^T$, $\det (M)$ and $\adj(M)$ denote the transpose, the determinant  and the adjugate
of $M$, respectively. The characteristic polynomial of $M$ is denoted by $\psi_M(x)$, which is defined as
\begin{equation*}\label{eq2}
    \psi_M(x) = \det(x I_n -M),
\end{equation*}
where $I_n$ is the identity matrix of order $n$. In particular, for a graph $G$ on $n$ vertices, $\psi_{A(G)}(x)$ and $\psi_{A_{\alpha}(G)}(x)$ denote the characteristic polynomial of $A(G)$ and $A_{\alpha}(G)$, respectively. The roots of the characteristic polynomial of $M$ are called the $M$-eigenvalues. Let $\lambda_1(A(G)) \geq \lambda_2(A(G)) \geq \dots \geq \lambda_n(A(G))$ and $\lambda_1(A_{\alpha}(G)) \geq \lambda_2(A_{\alpha}(G)) \geq \dots \geq \lambda_n(A_{\alpha}(G))$ be the $A$-eigenvalues and $A_{\alpha}$-eigenvalues of $G$, respectively. The set of all eigenvalues of $A(G)$ and $A_{\alpha}(G)$  together with their multiplicities is called the $A$-spectrum and the $A_{\alpha}$-spectrum of $G$, respectively. If $\lambda_1>\lambda_2>\dots>\lambda_k$ are the distinct $A_{\alpha}$-eigenvalues of $G$, then the $A_{\alpha}$-spectrum of $G$  can be written as 
$$\sigma(A_{\alpha}(G))=\{[\lambda_1]^{m_1}, [\lambda_2]^{m_2}, \hdots ,[\lambda_k]^{m_k}\},$$
where $m_i$ is the algebraic multiplicity of $\lambda_i$, for $1\leq i\leq k$.
Two graphs are said to be $A$-cospectral (respectively, $A_{\alpha}$-cospectral) if they have the same $A$-spectrum (respectively, $A_{\alpha}$-spectrum).

In spectral graph theory, computing the spectra and the characteristic polynomials of various classes of matrices associated with the graphs are interesting problems considered in the literature. Various graph operations such as the disjoint union, the Cartesian product, the Kronecker product, the corona, the edge corona, the neighborhood corona, the subdivision-edge neighborhood corona, the join, the subdivision-vertex join, the subdivision-edge join, the $R$-vertex join, the $R$-edge join etc have been introduced and their adjacency, Laplacian and signless Laplacian spectra are computed in \cite{spec_of_corona_by_barik_pati_sarma, spec_of_gr_by_hamm,  spec_of_corona_by_Cui_&_Tian, intro_2_th_of_gr_spectra, spectra_of_gr_th&apps,spec_of_rvertjoin_&_redgejoin_by_das_&_panigrahi, spec_of_edge_corona_by_hou_shiu, spec_of_nbd_corona_by_gopalapillai, spec_of_gr_op_based_on_R_gr_by_Lan_Zhou,   spec_of_sub_vert_sub_edge_nbd_corona_by_Liu_Lu,spec_of_subvertexjoin_subedgejoin_by_Liu_&_Zhang, spec_of_nbd_corona_by_Liu_Zhou, spec_of_corona_by_Mcleman_&_Mcnicholas, signless_lap_spec_of_corona_and_edge_corona_by_Wang_Zhou}. Recently, the $A_{\alpha}$-spectra of some graph operations have been studied in \cite{A_alph_spec_of_graph_prod_by_Li_Wang, A_alpha_of_join_by_Lin_Liu_Xue, corona_gr_&_alpha_eigen_by_Tahir_&_Zhang}. Motivated by these works, in this article, we determine the $A_{\alpha}$-spectra of  subdivision-vertex join, subdivision-edge join, $R$-vertex join and $R$-edge join of two graphs $G_1$ and $G_2$, where $G_1$ is a regular graph and $G_2$ is an arbitrary graph. As applications of these results on the $A_{\alpha}$-spectra, we construct infinitely many pairs of $A_{\alpha}$-cospectral graphs. 
The results obtained in this paper extends the results  \cite{spec_of_rvertjoin_&_redgejoin_by_das_&_panigrahi,spec_of_subvertexjoin_subedgejoin_by_Liu_&_Zhang} for $A_\alpha$-spectra.

The join  of two graphs $G_1$ and $G_2$, denoted by $G_1 \vee G_2$, is the disjoint union of $G_1$ and $G_2$ together with all possible edges connecting all the vertices of $G_1$ with all the vertices of $G_2$  \cite{GT_by_Harary}. The subdivision graph  of a graph $G$, denoted by  $S(G)$, is the graph obtained by inserting a new vertex in every edge of $G$, that is by replacing each edge of $G$ by  $P_3$, the path on $3$ vertices \cite{intro_2_th_of_gr_spectra}. Based on this subdivision graph, two new graph operations, namely the subdivision-vertex join and the subdivision-edge join are introduced in \cite{spectra_of_two_new_joins_by_indulal}. The subdivision-vertex join of two graphs $G_1$ and $G_2$, denoted by $G_1 \dot{\vee} G_2$, is the graph obtained from $S(G_1)$ and $G_2$ by joining every vertex of $V(G_1)$ with every vertex of $V(G_2)$. The subdivision-edge join  of $G_1$ and $G_2$, denoted by $G_1 \underline{\vee} G_2$, is the graph obtained from $S(G_1)$ and $G_2$ by joining every vertex of $I(G_1)$ with every vertex of $V(G_2)$, where $I(G_1)$ is the set of inserted vertices of $S(G_1)$. The $R$-graph  of a graph $G$, denoted by $\mathcal{R}(G)$, is the graph obtained from $G$ by introducing a new vertex $u_e$ for each edge $e \in E(G)$, and making $u_e$ adjacent to both the end vertices of $e$ \cite{spectra_of_gr_th&apps}. In \cite{res_dist_&_kirchhoff_indx_by_Liu&Zhou&Bu}, the authors defined two new graph operations based on $R$-graph, namely the $R$-vertex join and the $R$-edge join. The $R$-vertex join of two graphs $G_1$ and $G_2$, denoted by $G_1\langle \textrm{v} \rangle G_2$, is the graph obtained from $\mathcal{R}(G_1)$ and $G_2$ by joining every vertex of $V(G_1)$ with every vertex of $V(G_2)$. The $R$-edge join of $G_1$ and $G_2$, denoted by $G_1 \langle \textrm{e} \rangle G_2$, is the graph obtained from $\mathcal{R}(G_1)$ and $G_2$ by joining every vertex of $I(G_1)$ with every vertex of $V(G_2)$,  where $I(G_1)$ is the set of inserted vertices of $\mathcal{R}(G_1)$.

This paper is organized as follows: In section \ref{sec3}, we collect some preliminary results and define some useful notations. In Section \ref{sec4}, \ref{sec5}, \ref{sec6} and \ref{sec7}, we obtain the characteristic polynomials of $A_{\alpha}$-matrices for the graphs $G_1 \dot{\vee} G_2$, $G_1 \underline{\vee} G_2$,  $G_1 \langle \textrm{v} \rangle G_2$ and $G_1 \langle \textrm{e} \rangle G_2$ respectively, where $G_1$ is an $r_1$-regular graph and $G_2$ is an arbitrary graph. In each of these four sections, we include some results on the eigenvalues of the said matrices taking $G_2$ as some particular graphs, like regular and complete bipartite. Also, as an application of these results, we construct infinitely many pairs of graphs having the same $A_{\alpha}$-spectrum.


\section{Preliminaries}\label{sec3}
Let $G$ be a graph on $n$ vertices and $m$ edges. The $\textit{incidence matrix}$ $R(G)$ of the graph $G$ is the $(0,1)$-matrix, whose rows and columns are indexed by the vertex set and the edge set of $G$, respectively. The $(i,j)$-th entry of $R(G)$ is $1$, if the vertex $i$ is incident to the edge $j$, and $0$  otherwise. The \textit{line graph} of $G$, denoted by $\mathcal{L}(G)$, is the graph with vertices are  the edges of $G$. Two vertices in $\mathcal{L}(G)$ are adjacent if and only if the corresponding edges have a common end-vertex in $G$. It is well known  \cite{intro_2_th_of_gr_spectra} that
\begin{equation}\label{eq3}
    R(G)^T R(G) = A(\mathcal{L}(G)) + 2I_m.
\end{equation}
If $G$ is an $r$-regular graph, then
\begin{equation}\label{eq4}
    R(G)R(G)^T = A(G) + rI_n.
\end{equation}

 We will use the symbols $\boldsymbol{0_n}$ and $\boldsymbol{1_n}$ ($\boldsymbol{0_{m\times n}}$ and $\boldsymbol{J_{m\times n}}$) for the column vectors ($m \times n$ matrices) consisting all $0$'s and all $1$'s, respectively. The $M$-coronal $\Gamma_M(x)$ of an $n \times n$ square matrix $M$ is defined \cite{spec_of_corona_by_Cui_&_Tian, spec_of_corona_by_Mcleman_&_Mcnicholas} by
\begin{equation}\label{eq5}
    \Gamma_M(x) = \boldsymbol{1_n^T}(x I_n - M)^{-1} \boldsymbol{1_n}.
\end{equation}
It is known that \cite[Proposition 2] {spec_of_corona_by_Cui_&_Tian}, if each row sum of an $n\times n$ matrix $M$ is constant, say $t$, then
\begin{equation}\label{eq6}
    \Gamma_M(x)=\frac{n}{x-t}.
\end{equation}

Now we state some lemmas which will be useful to prove our main results.
\begin{lemma} [\textbf{Schur complement formula}] \textup{\cite{schur_compl_&_its_appl_by_Zhang}} \label{lem1}
    Let $M_1$, $M_2$, $M_3$ and $M_4$ be matrices of size $r\times r$, $r\times s$, $s\times r$ and $s\times s$, respectively. Then
    \begin{equation*}\label{eq7}
        \begin{split}
            \det \begin{pmatrix}  M_1 & M_2\\M_3 & M_4 \end{pmatrix} &= \det(M_4) \cdot \det \Big(M_1 - M_2 M_4^{-1} M_3\Big), \textrm{when} \enskip M_4 \enskip \textrm{is invertible}.\\ & = \det(M_1) \cdot \det \Big(M_4 -M_3 M_1^{-1}M_2 \Big), \textrm{when} \enskip M_1 \enskip \textrm{is invertible}.
        \end{split}
    \end{equation*}
   The matrices $M_1 - M_2 M_4^{-1} M_3$ and $M_4 - M_3 M_1^{-1} M_2$ are called the \textit{Schur complements} of $M_4$ and $M_1$, respectively.
\end{lemma}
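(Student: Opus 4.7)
The plan is to establish the formula via an explicit block factorization of the partitioned matrix. Assume first that $M_4$ is invertible. I would introduce the block upper-triangular factor
\[
U = \begin{pmatrix} I_r & M_2 M_4^{-1} \\ \boldsymbol{0_{s\times r}} & I_s \end{pmatrix}
\]
and verify by direct block multiplication that
\[
\begin{pmatrix} M_1 & M_2 \\ M_3 & M_4 \end{pmatrix} = U \begin{pmatrix} M_1 - M_2 M_4^{-1} M_3 & \boldsymbol{0_{r\times s}} \\ M_3 & M_4 \end{pmatrix}.
\]
Since $U$ is block upper triangular with identity blocks on the diagonal, $\det(U)=1$, and the right-hand factor is block lower triangular with diagonal blocks $M_1 - M_2 M_4^{-1} M_3$ and $M_4$. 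Multiplicativity of the determinant together with the fact that a block triangular matrix has determinant equal to the product of the determinants of its diagonal blocks then yields the first identity.

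For the second identity, assuming $M_1$ is invertible, I would use the analogous left-multiplication by the block lower-triangular factor $\bigl(\begin{smallmatrix} I_r & 0 \\ -M_3 M_1^{-1} & I_s\end{smallmatrix}\bigr)$ to reduce $\bigl(\begin{smallmatrix} M_1 & M_2 \\ M_3 & M_4\end{smallmatrix}\bigr)$ to $\bigl(\begin{smallmatrix} M_1 & M_2 \\ 0 & M_4 - M_3 M_1^{-1} M_2\end{smallmatrix}\bigr)$ and conclude the same way. Alternatively, one may simply swap the two diagonal blocks (which contributes a sign that cancels with the analogous sign incurred on the other side of the identity) and reuse the first case.

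The only genuine ingredient behind both cases is the determinant rule for block triangular matrices, which follows either from iterated Laplace expansion along the zero block or from the Leibniz formula by noting that any non-vanishing signed permutation product must send the top-index rows into the top-index columns. Since everything is symbolic and the blocks are manipulated by a single elimination step, I do not anticipate any substantive obstacle; the proof is essentially bookkeeping to confirm the block factorization, and the invertibility hypothesis on $M_4$ (respectively $M_1$) is used only to justify the existence of the eliminator matrix.
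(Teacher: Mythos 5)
Your block-factorization argument is correct: the identity
\[
\begin{pmatrix} M_1 & M_2 \\ M_3 & M_4 \end{pmatrix} = \begin{pmatrix} I_r & M_2 M_4^{-1} \\ 0 & I_s \end{pmatrix}\begin{pmatrix} M_1 - M_2 M_4^{-1} M_3 & 0 \\ M_3 & M_4 \end{pmatrix}
\]
checks out by direct block multiplication, the determinant rule for block triangular matrices does the rest, and the symmetric case with $M_1$ invertible is handled the same way. Note that the paper itself offers no proof of this lemma --- it is quoted as a known result with a citation to Zhang's monograph on Schur complements --- so there is nothing to compare against; what you have written is the standard textbook derivation and is complete.
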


\begin{lemma}\textup{\cite[Proposition $2.2$]{spec_of_subvertexjoin_subedgejoin_by_Liu_&_Zhang}} \label{lem2}
    Let $A$ be an $n\times n$ real matrix. Then
    \begin{equation*}\label{eq8}
        \det(A+ cJ_{n\times n}) = \det(A) +  c\boldsymbol{1_n^T} \emph{adj}(A) \boldsymbol{1_n},
    \end{equation*}
    where $c$ is a real number.
\end{lemma}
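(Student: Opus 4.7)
The plan is to derive the identity from the Schur complement formula (Lemma \ref{lem1}) by evaluating a single bordered determinant in two different ways. I would introduce the $(n+1)\times(n+1)$ block matrix
$$B = \begin{pmatrix} A & \boldsymbol{1_n} \\ -c\,\boldsymbol{1_n^T} & 1 \end{pmatrix},$$
and apply Lemma \ref{lem1} twice: once using the $1\times 1$ scalar block (which is always invertible), and once using the $n\times n$ block $A$ (assuming, for the moment, that $A$ is invertible).

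Applying Lemma \ref{lem1} with $M_4 = 1$ yields
$$\det(B) = 1\cdot\det\bigl(A-\boldsymbol{1_n}\cdot 1\cdot(-c\,\boldsymbol{1_n^T})\bigr) = \det\bigl(A+c\,\boldsymbol{1_n}\boldsymbol{1_n^T}\bigr) = \det(A+cJ_{n\times n}),$$
which recovers the left-hand side of the target identity. Applying Lemma \ref{lem1} instead with $M_1 = A$ (under the temporary assumption that $A$ is invertible) gives
$$\det(B) = \det(A)\cdot\bigl(1-(-c\,\boldsymbol{1_n^T})A^{-1}\boldsymbol{1_n}\bigr) = \det(A) + c\,\boldsymbol{1_n^T}\bigl(\det(A)\,A^{-1}\bigr)\boldsymbol{1_n}.$$
Using the standard identity $\det(A)\,A^{-1} = \adj(A)$ for invertible $A$, the second term becomes $c\,\boldsymbol{1_n^T}\adj(A)\boldsymbol{1_n}$. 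Equating the two evaluations of $\det(B)$ establishes the formula whenever $A$ is invertible.

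The main obstacle is extending the result to singular $A$, where the calculation above breaks down because $A^{-1}$ does not exist. To dispatch this, I would invoke a density argument: both sides of the claimed identity are polynomials in the $n^2$ entries of $A$, since the determinant is polynomial by the Leibniz formula and each entry of $\adj(A)$ is, up to a sign, an $(n-1)\times(n-1)$ minor of $A$. Since the invertible matrices form a Zariski-dense subset of the space of real $n\times n$ matrices, two polynomial functions agreeing on this dense set must be identically equal, so the identity extends to all $A$.
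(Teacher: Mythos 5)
Your argument is correct: the two Schur-complement evaluations of the bordered matrix $B$ are applied exactly as Lemma \ref{lem1} allows (the $1\times 1$ block is always invertible, and the block $A$ is invertible in the generic case), the identity $\det(A)A^{-1}=\adj(A)$ is used legitimately, and the passage to singular $A$ by density of $GL_n(\mathbb{R})$ together with the polynomial nature of both sides is a standard and valid closing step. Note, however, that the paper does not prove this lemma at all; it is imported by citation from Liu and Zhang, where the usual argument is different and more elementary: one expands $\det(A+c\boldsymbol{1_n}\boldsymbol{1_n^T})$ by multilinearity in the columns, observes that every term in which two or more columns have been replaced by $c\boldsymbol{1_n}$ vanishes (proportional columns), and identifies the surviving first-order terms $c\sum_j\det\bigl(A \text{ with column } j \text{ replaced by } \boldsymbol{1_n}\bigr)$ with $c\,\boldsymbol{1_n^T}\adj(A)\boldsymbol{1_n}$ via cofactor expansion. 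That route treats invertible and singular $A$ uniformly and needs no limiting or density argument, whereas your route has the advantage of reusing the Schur complement machinery already set up in Lemma \ref{lem1} and generalizes immediately to $\det(A+c\,uv^T)=\det(A)+c\,v^T\adj(A)\,u$ for arbitrary vectors $u,v$. Either proof is acceptable; just make the density step explicit about why $GL_n(\mathbb{R})$ is dense (it is the complement of the zero set of the nonzero polynomial $\det$).
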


\begin{lemma} \textup{\cite[Corollary $2.3$]{spec_of_subvertexjoin_subedgejoin_by_Liu_&_Zhang}} \label{lem3}
    Let $A$ be an $n\times n$ real matrix and $c$ be a real number. Then
    \begin{equation*} \label{eq9}
        \det(xI_n - A - c J_{n\times n}) = (1-c \Gamma_A(x)) \det (xI_n -A).
    \end{equation*}
\end{lemma}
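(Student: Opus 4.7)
The plan is to derive Lemma \ref{lem3} as a direct consequence of Lemma \ref{lem2} together with the definition of the coronal in \eqref{eq5}. The key observation is that $J_{n\times n} = \boldsymbol{1_n}\boldsymbol{1_n^T}$, so the expression $xI_n - A - cJ_{n\times n}$ is a rank-one perturbation of $xI_n - A$, exactly the setting to which Lemma \ref{lem2} applies.

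First I would substitute $xI_n - A$ in place of the matrix ``$A$'' appearing in Lemma \ref{lem2}, and $-c$ in place of $c$. This yields, immediately and without any calculation, the identity
\begin{equation*}
\det\bigl(xI_n - A - cJ_{n\times n}\bigr) = \det(xI_n - A) \;-\; c\,\boldsymbol{1_n^T}\,\adj(xI_n-A)\,\boldsymbol{1_n}.
\end{equation*}
The remaining task is to convert the adjugate term into the coronal $\Gamma_A(x)$. For this I would invoke the classical identity $\adj(M) = \det(M)\,M^{-1}$, valid whenever $M$ is invertible. Setting $M = xI_n - A$ and assuming first that $x$ is not an eigenvalue of $A$, we get
\begin{equation*}
\boldsymbol{1_n^T}\,\adj(xI_n - A)\,\boldsymbol{1_n} = \det(xI_n - A)\cdot \boldsymbol{1_n^T}(xI_n - A)^{-1}\boldsymbol{1_n} = \det(xI_n - A)\,\Gamma_A(x),
\end{equation*}
by the definition \eqref{eq5} of $\Gamma_A(x)$. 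Substituting back and factoring out $\det(xI_n-A)$ gives the desired expression $(1 - c\,\Gamma_A(x))\det(xI_n-A)$.

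The only subtlety is that the identity must hold for \emph{every} real (or complex) $x$, while the manipulation with $(xI_n - A)^{-1}$ is valid only where $xI_n - A$ is invertible, i.e.\ away from the spectrum of $A$. I would handle this standard issue by a polynomial identity argument: the left-hand side $\det(xI_n - A - cJ_{n\times n})$ is a polynomial in $x$, and so is $\det(xI_n - A) - c\,\boldsymbol{1_n^T}\adj(xI_n - A)\boldsymbol{1_n}$, since the entries of $\adj(xI_n-A)$ are polynomials in $x$. Having shown the two polynomial expressions agree off the finite set $\sigma(A)$, they must be equal identically, which is precisely the statement of the lemma when rewritten using $\Gamma_A(x)$.

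There is no real obstacle here; the argument is essentially a one-line application of Lemma \ref{lem2} together with the $\adj$–inverse identity. The only point requiring care is the interpretation of $\Gamma_A(x)$ at eigenvalues of $A$, which is resolved by reading the right-hand side $(1 - c\,\Gamma_A(x))\det(xI_n - A)$ as the polynomial $\det(xI_n - A) - c\,\boldsymbol{1_n^T}\adj(xI_n-A)\boldsymbol{1_n}$ with the singularities of $\Gamma_A(x)$ cancelled by the factor $\det(xI_n - A)$.
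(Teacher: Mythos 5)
Your derivation is correct and is precisely the intended one: the paper states this lemma without proof as Corollary 2.3 of the cited reference, where it follows from the rank-one determinant formula (Lemma \ref{lem2} here) combined with $\adj(xI_n-A)=\det(xI_n-A)(xI_n-A)^{-1}$ and the definition \eqref{eq5} of the coronal, exactly as you argue. Your added care about interpreting the right-hand side as a polynomial identity when $x$ lies in the spectrum of $A$ is a sensible refinement rather than a deviation.
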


\begin{lemma} \textup{\cite{intro_2_th_of_gr_spectra}} \label{lem10}
     Let $G$ be an $r$-regular graph on $n$ vertices, and let $\mathcal{L}(G)$ be the line graph of $G$. If the characteristic polynomials of the matrices $A(G)$ and $A(\mathcal{L}(G))$ are $\psi_{A(G)}(x)$ and $\psi_{A(\mathcal{L}(G))}(x)$, respectively, then
     $$ \psi_{A(\mathcal{L}(G))}(x) = (x+2)^{m-n} \psi_{A(G)}(x-r+2). $$
\end{lemma}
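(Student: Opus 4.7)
The plan is to exploit the incidence-matrix factorisations recalled in~(\ref{eq3}) and~(\ref{eq4}), namely $R(G)^T R(G) = A(\mathcal{L}(G)) + 2I_m$ and, using $r$-regularity, $R(G) R(G)^T = A(G) + rI_n$. The bridge between these two products is the classical determinant identity
\begin{equation*}
x^m \det(xI_n - XY) \;=\; x^n \det(xI_m - YX),
\end{equation*}
valid for any $n\times m$ matrix $X$ and any $m\times n$ matrix $Y$. This identity is itself an immediate consequence of Lemma~\ref{lem1}: evaluating the determinant of the $(n+m)\times(n+m)$ block matrix with top row $(xI_n,\,X)$ and bottom row $(Y,\,I_m)$ via the Schur complement of $I_m$ yields $\det(xI_n - XY)$, while the Schur complement of $xI_n$ yields $x^{n-m}\det(xI_m - YX)$; equating the two expressions gives the displayed identity.

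Taking $X = R(G)$ and $Y = R(G)^T$ and substituting the two factorisations into this relation produces
\begin{equation*}
x^m \det\bigl((x-r)I_n - A(G)\bigr) \;=\; x^n \det\bigl((x-2)I_m - A(\mathcal{L}(G))\bigr).
\end{equation*}
To align this with the definitions $\psi_{A(G)}(y) = \det(yI_n - A(G))$ and $\psi_{A(\mathcal{L}(G))}(x) = \det(xI_m - A(\mathcal{L}(G)))$, the next step is the polynomial substitution $x \mapsto x+2$ on both sides. This turns the right-hand side into $(x+2)^n\,\psi_{A(\mathcal{L}(G))}(x)$ and the left-hand side into $(x+2)^m\,\psi_{A(G)}(x-r+2)$. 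Cancelling the factor $(x+2)^n$ as a polynomial identity then yields exactly the claimed formula.

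There is no substantive obstacle here beyond careful bookkeeping. The main subtleties worth flagging are that the regularity hypothesis is used precisely to collapse $R(G)R(G)^T = A(G) + D(G)$ into the scalar shift $A(G) + rI_n$ (without $r$-regularity one cannot combine $D(G)$ with $xI_n$ as a single shift of the spectral parameter), and that the shift $x \mapsto x+2$ must be performed \emph{after} applying the $XY$--$YX$ identity but \emph{before} matching against $\psi_{A(G)}$, so that the argument $x-r+2$ lands in the right place.
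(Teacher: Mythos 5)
The paper does not prove this lemma at all; it is quoted directly from the cited reference. Your argument is correct and is precisely the classical proof of this fact (the one given in the cited textbook): the $XY$ versus $YX$ determinant identity applied to the factorisations $R(G)^TR(G)=A(\mathcal{L}(G))+2I_m$ and $R(G)R(G)^T=A(G)+rI_n$, followed by the shift $x\mapsto x+2$, with regularity used exactly where you say it is.
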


For two positive integers $p, q$, let $K_p$ and $K_{p,q}$ denote the complete graph on $p$ vertices, and complete bipartite graph on $p+ q$ vertices, respectively. 
\begin{lemma} \textup{\cite[Proposition $37$]{A_Q-merging_by_Nikiforov}} \label{lem4}
    The spectrum of $A_{\alpha}(K_{p,q})$ is\\
    $\sigma(A_{\alpha}(K_{p,q})) = \bigg\{ \frac{\alpha (p+q)+ \sqrt{{\alpha}^2 (p+q)^2 + 4pq(1-2\alpha)}}{2}$, $[\alpha p]^{q-1}$,$ [\alpha q]^{p-1}$, $\frac{\alpha (p+q)- \sqrt{{\alpha}^2 (p+q)^2 + 4pq(1-2\alpha)}}{2} \bigg\}$.
\end{lemma}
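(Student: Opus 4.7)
The plan is to exploit the block structure of $A_\alpha(K_{p,q})$ by splitting the $\mathbb{R}^{p+q}$ into subspaces on which the matrix acts transparently. Label the vertices so that the bipartition has parts of size $p$ and $q$; then $D(K_{p,q})=\diag(qI_p,\, pI_q)$ and $A(K_{p,q})$ is the off-diagonal block matrix with $J_{p\times q}$ in the top-right and its transpose in the bottom-left. Consequently,
\begin{equation*}
A_{\alpha}(K_{p,q}) \;=\; \begin{pmatrix} \alpha q\, I_p & (1-\alpha)\, J_{p\times q} \\[2pt] (1-\alpha)\, J_{q\times p} & \alpha p\, I_q \end{pmatrix}.
\end{equation*}

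First I would isolate the easy eigenvalues. For any vector $x\in\mathbb{R}^p$ with $\boldsymbol{1_p^T}x=0$, the block vector $(x,\boldsymbol{0_q})^T$ is annihilated by the off-diagonal blocks (since $J_{q\times p}x=\boldsymbol{0_q}$) and is scaled by $\alpha q$ on the top. This yields an eigenvalue $\alpha q$ of multiplicity $p-1$. By the symmetric construction with $y\in\mathbb{R}^q$ satisfying $\boldsymbol{1_q^T}y=0$, I pick up the eigenvalue $\alpha p$ with multiplicity $q-1$.

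The remaining two eigenvalues come from the $2$-dimensional invariant subspace spanned by $(\boldsymbol{1_p},\boldsymbol{0_q})^T$ and $(\boldsymbol{0_p},\boldsymbol{1_q})^T$; since $J_{p\times q}\boldsymbol{1_q}=q\boldsymbol{1_p}$ and $J_{q\times p}\boldsymbol{1_p}=p\boldsymbol{1_q}$, the restriction of $A_\alpha(K_{p,q})$ to this subspace is represented by the $2\times 2$ matrix
\begin{equation*}
M \;=\; \begin{pmatrix} \alpha q & (1-\alpha)q \\ (1-\alpha)p & \alpha p \end{pmatrix}.
\end{equation*}
A direct computation gives $\mathrm{tr}(M)=\alpha(p+q)$ and $\det(M)=pq(2\alpha-1)$, so solving the quadratic $\lambda^2-\alpha(p+q)\lambda+pq(2\alpha-1)=0$ produces the two remaining eigenvalues $\frac{\alpha(p+q)\pm\sqrt{\alpha^2(p+q)^2+4pq(1-2\alpha)}}{2}$. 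A count confirms $(p-1)+(q-1)+2=p+q$, so the full spectrum has been identified; no serious obstacle arises, as the entire argument is block-structural.
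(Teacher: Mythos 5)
Your proof is correct: the block form of $A_{\alpha}(K_{p,q})$, the $(p-1)+(q-1)$ eigenvectors supported on a single part and orthogonal to the all-ones vector, and the $2\times 2$ matrix $M$ on the span of $(\boldsymbol{1_p},\boldsymbol{0_q})^T$ and $(\boldsymbol{0_p},\boldsymbol{1_q})^T$ (with $\mathrm{tr}(M)=\alpha(p+q)$ and $\det(M)=pq(2\alpha-1)$) are all computed accurately, and the multiplicities sum to $p+q$. The paper does not prove this lemma --- it simply cites it from Nikiforov --- so there is no in-paper argument to compare against; your equitable-partition/block-decomposition derivation is the standard one and fully establishes the stated spectrum.
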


\begin{lemma} \textup{\cite[Theorem $3$]{corona_gr_&_alpha_eigen_by_Tahir_&_Zhang}} \label{lem6}
    For the graph $K_{p,q}$, the $A_{\alpha}$-coronal is given by,
    \begin{equation*} \label{eq10}
        \Gamma_{A_{\alpha}(K_{p,q})}(x) = \frac{(p+q)x
        -\alpha (p + q)^2 + 2pq}{x^2 -\alpha (p+q)x + (2\alpha -1)pq}.
    \end{equation*}
\end{lemma}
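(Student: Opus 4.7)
The plan is to compute $\Gamma_{A_\alpha(K_{p,q})}(x)$ directly from the definition $\Gamma_M(x) = \boldsymbol{1_n^T}(xI_n - M)^{-1}\boldsymbol{1_n}$ given in \eqref{eq5}. The shortcut \eqref{eq6} for constant-row-sum matrices does not apply here: the vertices in the two colour classes of $K_{p,q}$ have different degrees, so the rows of $A_\alpha(K_{p,q})$ sum to either $q$ or $p$. I would start by writing down the block form induced by the bipartition,
\[
A_\alpha(K_{p,q}) = \begin{pmatrix} \alpha q\, I_p & (1-\alpha)\, J_{p\times q} \\ (1-\alpha)\, J_{q\times p} & \alpha p\, I_q \end{pmatrix},
\]
since every vertex of the first colour class has degree $q$ and every vertex of the second has degree $p$.

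Rather than invert $xI_{p+q} - A_\alpha(K_{p,q})$ in closed form, I would solve the linear system $(xI_{p+q} - A_\alpha(K_{p,q}))\mathbf{y} = \boldsymbol{1_{p+q}}$ and then read $\Gamma$ off as $\boldsymbol{1_{p+q}^T}\mathbf{y}$. Because both the matrix and the right-hand side are invariant under any permutation preserving the bipartition, the solution must be constant on each colour class, so the ansatz $\mathbf{y} = (a\,\boldsymbol{1_p^T},\ b\,\boldsymbol{1_q^T})^T$ is without loss of generality. Substituting this ansatz and using $J_{p\times q}\boldsymbol{1_q} = q\,\boldsymbol{1_p}$ and $J_{q\times p}\boldsymbol{1_p} = p\,\boldsymbol{1_q}$, the vector equation collapses to the $2\times 2$ scalar system
\[
\begin{pmatrix} x-\alpha q & -(1-\alpha)q \\ -(1-\alpha)p & x-\alpha p \end{pmatrix} \begin{pmatrix} a \\ b \end{pmatrix} = \begin{pmatrix} 1 \\ 1 \end{pmatrix}.
\]

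The rest is Cramer's rule. The determinant of the coefficient matrix simplifies, via the identity $\alpha^2 - (1-\alpha)^2 = 2\alpha - 1$, to $x^2 - \alpha(p+q)x + (2\alpha-1)pq$, which is exactly the denominator claimed. Solving for $a$ and $b$ and computing $\Gamma = pa + qb$, the numerator collects to $(p+q)x - \alpha(p^2+q^2) + 2(1-\alpha)pq$, which becomes $(p+q)x - \alpha(p+q)^2 + 2pq$ after applying $p^2 + q^2 + 2pq = (p+q)^2$. There is no real obstacle: the bipartite symmetry reduces the inversion to a $2\times 2$ problem, and the only care required is the bookkeeping to recognise the two $(p+q)^2$-completion cancellations.
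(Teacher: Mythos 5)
Your proof is correct. Note that the paper does not actually prove this lemma; it is imported verbatim as Theorem 3 of the cited reference of Tahir and Zhang, so there is no internal argument to compare against. Your direct computation is a clean and complete substitute: the block form of $A_\alpha(K_{p,q})$ is right (degrees $q$ and $p$ on the two colour classes), the reduction of $(xI_{p+q}-A_\alpha(K_{p,q}))\mathbf{y}=\boldsymbol{1_{p+q}}$ to a $2\times 2$ system via the constant-on-classes ansatz is legitimate (for $x$ outside the spectrum the solution is unique and invariant under the bipartition-preserving automorphisms, hence constant on each class, and the resulting identity of rational functions extends everywhere), and the algebra checks out: the determinant $(x-\alpha q)(x-\alpha p)-(1-\alpha)^2pq$ collapses to $x^2-\alpha(p+q)x+(2\alpha-1)pq$, and $pa+qb$ has numerator $(p+q)x-\alpha(p^2+q^2)+2(1-\alpha)pq=(p+q)x-\alpha(p+q)^2+2pq$. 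You are also right that the shortcut \eqref{eq6} is unavailable here since $A_\alpha(K_{p,q})$ does not have constant row sums unless $p=q$; this is precisely why the lemma is needed separately from the regular case.
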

In \cite[Lemma 12]{spec_of_rvertjoin_&_redgejoin_by_das_&_panigrahi}, the authors have obtained an expression for the inverse of the matrix $(cI_n -dJ_{n\times n})$, for $c,d > 0$. We modify the conditions on $c$ and $d$, and restate the result with proof.
\begin{lemma} \label{lem5}
    Let $c$ and $d$ be two real numbers such that the matrix $(cI_n -dJ_{n\times n})$ is invertible. Then
    \begin{equation} \label{eq9_1}
        (cI_n -dJ_{n\times n})^{-1} = \frac{1}{c}I_n + \frac{d}{c(c-nd)}J_{n\times n}.
    \end{equation}
\end{lemma}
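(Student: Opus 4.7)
The plan is to use the ansatz that the inverse has the form $aI_n + bJ_{n\times n}$ for suitable scalars $a,b$, which is natural because $cI_n - dJ_{n\times n}$ lies in the commutative algebra spanned by $I_n$ and $J_{n\times n}$, and this algebra is closed under taking inverses (of its invertible elements). The key algebraic fact that makes everything work is $J_{n\times n}^2 = nJ_{n\times n}$.

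First I would briefly record the spectral information: the eigenvalues of $cI_n - dJ_{n\times n}$ are $c - nd$ (with eigenvector $\boldsymbol{1_n}$) and $c$ (on the orthogonal complement, with multiplicity $n-1$), so the invertibility hypothesis is equivalent to $c \neq 0$ and $c - nd \neq 0$. This confirms that the denominators $c$ and $c(c-nd)$ appearing in the right-hand side of the claimed formula are nonzero, so the expression is well-defined.

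Next I would form the product
\begin{equation*}
(cI_n - dJ_{n\times n})\bigl(aI_n + bJ_{n\times n}\bigr) = ca\, I_n + \bigl(cb - da - ndb\bigr)J_{n\times n},
\end{equation*}
using $J_{n\times n}^2 = nJ_{n\times n}$. Setting this equal to $I_n$ yields the two scalar equations $ca = 1$ and $cb - da - ndb = 0$. Solving them gives $a = 1/c$ and $b = d/(c(c-nd))$, which matches the claimed formula. To conclude rigorously, I would substitute these values back into $aI_n + bJ_{n\times n}$ and observe that the same computation shows it is a two-sided inverse (the algebra is commutative, so left-inverse equals right-inverse).

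There is no real obstacle here; the only point requiring a brief justification is that the stated hypothesis (invertibility of $cI_n - dJ_{n\times n}$) is exactly what guarantees $c\neq 0$ and $c - nd \neq 0$, which is needed to divide by these quantities when solving the linear system.
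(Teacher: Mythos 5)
Your proposal is correct and follows essentially the same route as the paper's proof: both use the eigenvalues of $J_{n\times n}$ to deduce $c\neq 0$ and $c-nd\neq 0$ from invertibility, and both reduce to the identity $J_{n\times n}^2 = nJ_{n\times n}$ to check that the product with $\frac{1}{c}I_n + \frac{d}{c(c-nd)}J_{n\times n}$ equals $I_n$. The only cosmetic difference is that you derive the coefficients from the ansatz $aI_n + bJ_{n\times n}$ while the paper verifies the given formula directly.
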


\begin{proof}
    The eigenvalues of $J_{n\times n}$ are $n$ and $0$, and $0$ has multiplicity $n-1$. Therefore, we have
    \begin{equation*}
        \det (cI_n -dJ_{n\times n}) = c^{n-1}(c-nd).
    \end{equation*}
    Since the matrix $(cI_n -dJ_{n\times n})$ is invertible, we have $ \det (cI_n -dJ_{n\times n}) = c^{n-1}(c-nd) \neq 0$. Thus $c \neq 0$ and $c-nd \neq 0$. So the expression on the right hand side of \eqref{eq9_1} is valid.
    Now,
    \begin{equation*}
    \begin{split}
        (cI_n -dJ_{n\times n}) \cdot \Big(\frac{1}{c}I_n + \frac{d}{c(c-nd)}J_{n\times n}\Big) &=I_{n} -\frac{d}{c}J_{n\times n} + \frac{d}{c-nd}J_{n\times n} -\frac{nd^2}{c(c-nd)}J_{n\times n}\\
        &=I_{n}.
    \end{split}
    \end{equation*}
    Hence the inverse of the matrix $(cI_n -dJ_{n\times n})$ is $\frac{1}{c}I_n + \frac{d}{c(c-nd)}J_{n\times n}$.
\end{proof}
\begin{rem}\label{labeling}
If $G_1$ is a graph on  $n_1$ vertices and $m_1$ edges, and $G_2$ is a graph on  $n_2$ vertices, each of the graphs  $G_1 \dot{\vee} G_2$, $G_1 \underline{\vee} G_2$,  $G_1 \langle \emph{\textrm{v}} \rangle G_2$ and $G_1 \langle \emph{\textrm{e}} \rangle G_2$ has $(n_1 + m_1 + n_2)$ vertices. We consider the following partition of the vertex set of above graphs:  $V(G_1) \cup I(G_1) \cup V(G_2)$, where $V(G_1)=\{v_1,v_2, \ldots, v_{n_1}\}$ and $V(G_2)=\{u_1,u_2, \ldots, u_{n_2}\}$ are vertex sets of $G_1$ and $G_2$, respectively, and $I(G_1) = \{v^\prime_1, v^\prime_2, \ldots, v^\prime_{m_1} \}$ is the set of inserted vertices to construct the graphs $S(G_1)$ and $R(G_1)$ from $G_1$. In the following figure, we illustrate the labeling process with a particular example.
\newpage
\begin{figure}[h]
\centering
\includegraphics[scale=.6]{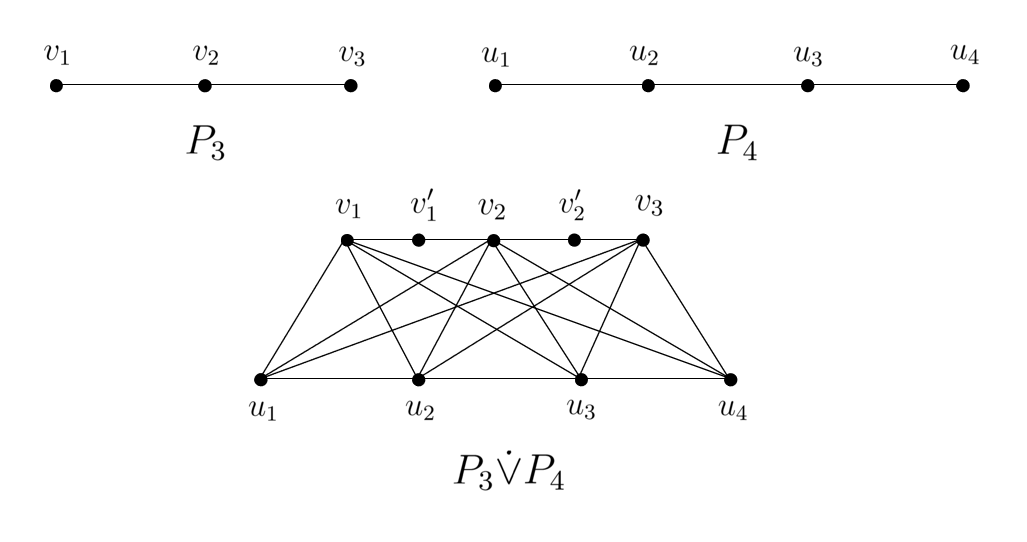}
\caption{The subdivision-vertex join of $P_3$ and $P_4$}
\end{figure}

\end{rem}

\section{$A_{\alpha}$-spectrum of $G_1 \dot{\vee} G_2$ } \label{sec4}
In this section, we discuss results related to the computation of  $A_{\alpha}$-spectrum of $G_1 \dot{\vee} G_2$, the subdivision-vertex join of the graphs $G_1$ and $G_2$. To begin with we obtain an expression for the $A_{\alpha}$-characteristic polynomial of $G_1 \dot{\vee} G_2$, where  $G_1$ is an $r_1$-regular graph and $G_2$ is an arbitrary graph.

\begin{theorem}\label{th1}
    Let $G_1$ be an $r_1$-regular graph on $n_1$ vertices and $m_1$ edges, and $G_2$ be a  graph on $n_2$ vertices. Let $\Gamma_{A_{\alpha}(G_2)}(x)$ be the $A_{\alpha}(G_2)$-coronal of $G_2$. Then, for each $\alpha \in [0,1]$, the $A_{\alpha}$-characteristic polynomial of $G_1 \dot{\vee} G_2$ is 
    \begin{multline} \label{eq11}
        \psi_{A_{\alpha}(G_1 \dot{\vee} G_2)}(x) =(x-2\alpha)^{m_1 -n_1} \cdot \psi_{A_{\alpha}(G_2)}(x- \alpha n_1)\\
        \cdot \prod_{i=2}^{n_1}\bigg(  x^2 -\alpha (2+ r_1 + n_2)x + \alpha (\alpha r_1 +r_1+2 \alpha n_2) -(1- \alpha)\Big( (1-\alpha)r_1 + \lambda_i\big(A_{\alpha}(G_1)\big) \Big)   \bigg)\\
        \cdot \bigg( x^2 -\alpha (2+r_1+n_2)x -2(r_1-2\alpha r_1 -\alpha^2 n_2) -n_1(1-\alpha)^2(x-2\alpha) \Gamma_{A_{\alpha}(G_2)}(x-\alpha n_1) \bigg).
    \end{multline}
\end{theorem}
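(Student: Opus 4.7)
The plan is to compute $\det\bigl(xI-A_\alpha(G_1\dot{\vee}G_2)\bigr)$ directly by writing the matrix in block form according to the partition $V(G_1)\cup I(G_1)\cup V(G_2)$ of Remark~\ref{labeling} and applying the Schur complement (Lemma~\ref{lem1}) twice. Since vertices of $V(G_1)$ have degree $r_1+n_2$, inserted vertices have degree $2$, and each $u\in V(G_2)$ has degree $d_{G_2}(u)+n_1$, the block form is
\[
xI-A_\alpha(G_1\dot{\vee}G_2)=\begin{pmatrix}
(x-\alpha(r_1+n_2))I_{n_1} & -(1-\alpha)R(G_1) & -(1-\alpha)J_{n_1\times n_2}\\
-(1-\alpha)R(G_1)^T & (x-2\alpha)I_{m_1} & 0\\
-(1-\alpha)J_{n_2\times n_1} & 0 & (x-\alpha n_1)I_{n_2}-A_\alpha(G_2)
\end{pmatrix}.
\]

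I would first take the Schur complement with respect to the central block $(x-2\alpha)I_{m_1}$, which pulls out the factor $(x-2\alpha)^{m_1}$ and, via the regularity identity $R(G_1)R(G_1)^T=A(G_1)+r_1I_{n_1}$ from~\eqref{eq4}, reduces the $(V(G_1),V(G_1))$-block to
\[
N(x):=(x-\alpha(r_1+n_2))I_{n_1}-\frac{(1-\alpha)^2}{x-2\alpha}\bigl(A(G_1)+r_1I_{n_1}\bigr),
\]
leaving the other blocks involving $V(G_2)$ untouched. A second Schur step against the $(V(G_2),V(G_2))$-block contributes the factor $\psi_{A_\alpha(G_2)}(x-\alpha n_1)$; since $J_{n_1\times n_2}\bigl((x-\alpha n_1)I_{n_2}-A_\alpha(G_2)\bigr)^{-1}J_{n_2\times n_1}=\Gamma_{A_\alpha(G_2)}(x-\alpha n_1)\,J_{n_1\times n_1}$, what remains is $\det\bigl(N(x)-\gamma(x)J_{n_1\times n_1}\bigr)$ with $\gamma(x)=(1-\alpha)^2\Gamma_{A_\alpha(G_2)}(x-\alpha n_1)$. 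Because $G_1$ is regular, $\boldsymbol{1_{n_1}}$ is a common eigenvector of $A(G_1)$, $N(x)$ and $J_{n_1\times n_1}$; decomposing $\mathbb{R}^{n_1}=\langle\boldsymbol{1_{n_1}}\rangle\oplus\langle\boldsymbol{1_{n_1}}\rangle^\perp$, the matrix $J_{n_1\times n_1}$ vanishes on the complement, producing the $n_1-1$ non-Perron factors coming from $\lambda_i(A(G_1))$, $i=2,\dots,n_1$, while the Perron direction picks up the additional term $-n_1\gamma(x)$.

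The remaining work is algebraic. Each of these $n_1$ scalar factors carries the denominator $(x-2\alpha)$, and clearing them consumes exactly $(x-2\alpha)^{n_1}$ of the prefactor, leaving $(x-2\alpha)^{m_1-n_1}$. The main obstacle is then bookkeeping: converting the non-Perron eigenvalues via the regularity relation $\lambda_i(A_\alpha(G_1))=\alpha r_1+(1-\alpha)\lambda_i(A(G_1))$, and expanding $(x-2\alpha)(x-\alpha(r_1+n_2))$ to verify that the constant terms collapse to $\alpha(\alpha r_1+r_1+2\alpha n_2)-(1-\alpha)\bigl((1-\alpha)r_1+\lambda_i(A_\alpha(G_1))\bigr)$ in each generic factor and to $-2(r_1-2\alpha r_1-\alpha^2 n_2)$ in the Perron factor. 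A minor technical point is that the Schur steps formally require $x\neq 2\alpha$ and the invertibility of $(x-\alpha n_1)I_{n_2}-A_\alpha(G_2)$; since both sides of~\eqref{eq11} are polynomials in $x$, the identity extends to all $x$ by continuity.
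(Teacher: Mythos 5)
Your proposal is correct and follows essentially the same route as the paper: a double Schur complement on the $3\times 3$ block form, the regularity identity $RR^T=A(G_1)+r_1I_{n_1}$, and the regularity of $G_1$ to dispose of the all-ones perturbation, with the bookkeeping checks you describe coming out exactly as in \eqref{eq11}. The only cosmetic differences are that you eliminate the $I(G_1)$ block before the $V(G_2)$ block (the paper does the reverse) and that you handle the rank-one term $-\gamma(x)J_{n_1\times n_1}$ by simultaneous diagonalization on $\langle\boldsymbol{1_{n_1}}\rangle\oplus\langle\boldsymbol{1_{n_1}}\rangle^{\perp}$ rather than via Lemma~\ref{lem3} together with the coronal formula \eqref{eq6}; the two devices are equivalent here.
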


\begin{proof}
With respect to the labeling of vertices considered in Remark \ref{labeling}, the adjacency matrix of $G_1 \dot{\vee} G_2$ is
\begin{equation}\label{eq12}
    A(G_1 \dot{\vee} G_2) =
    \begin{bmatrix}
        0_{n_1 \times n_1} & R & J_{n_1 \times n_2}\\
        R^T & 0_{m_1 \times m_1} & 0_{m_1 \times n_2}\\
        J_{n_2 \times n_1} & 0_{n_2 \times m_1} & A(G_2)
    \end{bmatrix},
\end{equation}
where $R$ is the $0-1$ incidence matrix of $G_1$.\\
The degrees of the vertices of the graph $G_1 \dot{\vee} G_2$ are:
\begin{equation*}\label{eq13}
    \begin{split}
        d_{G_1 \dot{\vee} G_2}(v_i) & = r_1 + n_2, \enskip \textrm{for} \enskip i=1,2, \ldots , n_1;\\
        d_{G_1 \dot{\vee} G_2}(v^\prime_j) & = 2, \enskip \textrm{for} \enskip j=1,2, \ldots , m_1;\\
        d_{G_1 \dot{\vee} G_2}(u_k) & = d_{G_2}(u_k) + n_1, \enskip \textrm{for} \enskip k=1,2, \ldots , n_2.
    \end{split}
\end{equation*}
So the diagonal matrix of order $(n_1 +m_1+n_2)\times (n_1+m_1+n_2)$, whose diagonal entries are the degrees of the vertices of the graph $G_1 \dot{\vee} G_2$ is
\begin{equation}\label{eq14}
    D(G_1 \dot{\vee} G_2) =
    \begin{bmatrix}
        (r_1 + n_2)I_{n_1} & 0_{n_1 \times m_1} & 0_{n_1 \times n_2}\\
        0_{m_1 \times n_1} & 2I_{m_1} & 0_{m_1 \times n_2}\\
        0_{n_2 \times n_1} & 0_{n_2 \times m_1} & D(G_2) + n_1I_{n_2}
    \end{bmatrix}.
\end{equation}
Using \eqref{eq12} and \eqref{eq14}, we have 
\begin{equation*}\label{eq15}
    A_{\alpha}(G_1 \dot{\vee} G_2)=
    \begin{bmatrix}
        \alpha(r_1 + n_2)I_{n_1} & (1-\alpha)R & (1-\alpha)J_{n_1 \times n_2}\\
        (1-\alpha)R^T & 2\alpha I_{m_1} & 0_{m_1 \times n_2}\\
        (1-\alpha)J_{n_2 \times n_1} & 0_{n_2 \times m_1} & A_{\alpha}(G_2) + \alpha n_1 I_{n_2}
    \end{bmatrix}.
\end{equation*}
Therefore, the characteristic polynomial of $A_{\alpha}(G_1 \dot{\vee} G_2)$ is
\begin{gather}\label{eq16}
    \begin{split}
    \psi_{A_{\alpha}(G_1 \dot{\vee}G_2)}(x) &= \det \Big(xI_{n_1+m_1+n_2}-A_{\alpha}(G_1 \dot{\vee} G_2) \Big)\\
    &= \det \begin{bmatrix}
        \Big(x-\alpha(r_1 + n_2)\Big) I_{n_1} & -(1-\alpha)R & -(1-\alpha)J_{n_1 \times n_2}\\
        -(1-\alpha)R^T & \Big(x-2\alpha \Big) I_{m_1} & 0_{m_1 \times n_2}\\
        -(1-\alpha)J_{n_2 \times n_1} & 0_{n_2 \times m_1} & (x-\alpha n_1) I_{n_2} - A_{\alpha}(G_2)
            \end{bmatrix}\\
    &=  \det \Big( (x-\alpha n_1) I_{n_2} - A_{\alpha}(G_2) \Big) \cdot \det S \quad (\text{by Lemma } \ref{lem1}),
    \end{split}
\end{gather}
where
\begin{multline*}
    S=
    \begin{bmatrix}
        \Big(x-\alpha(r_1 + n_2)\Big) I_{n_1} & -(1-\alpha)R\\
        -(1-\alpha)R^T & \Big(x-2\alpha \Big) I_{m_1}
    \end{bmatrix}\\
    -
    \begin{bmatrix}
        -(1-\alpha)J_{n_1 \times n_2}\\
        0_{m_1 \times n_2}
    \end{bmatrix}
    \Big( (x-\alpha n_1) I_{n_2} - A_{\alpha}(G_2) \Big)^{-1}
    \begin{bmatrix}
        -(1-\alpha)J_{n_2 \times n_1} & 0_{n_2 \times m_1}
    \end{bmatrix}.
\end{multline*}
\begin{multline*}
    \text{Now,   } \det S = \det \Bigg(
    \begin{bmatrix}
        \Big(x-\alpha(r_1 + n_2)\Big) I_{n_1} & -(1-\alpha)R\\
        -(1-\alpha)R^T & \Big(x-2\alpha \Big) I_{m_1}
    \end{bmatrix}\\
    - (1-\alpha)^2
    \begin{bmatrix}
        \Gamma_{A_{\alpha}(G_2)}(x-\alpha n_1) J_{n_1 \times n_1} & 0_{n_1 \times m_1}\\
        0_{m_1 \times n_1} & 0_{m_1 \times m_1}
    \end{bmatrix}
    \Bigg)
\end{multline*}
\begin{flalign*}
    = \det
    \begin{bmatrix}
        \Big(x-\alpha(r_1 + n_2)\Big) I_{n_1} - (1-\alpha)^2\Gamma_{A_{\alpha}(G_2)}(x-\alpha n_1) J_{n_1 \times n_1} & -(1-\alpha)R\\
        -(1-\alpha)R^T & \Big(x-2\alpha \Big) I_{m_1}
    \end{bmatrix}.    &&
\end{flalign*}
By Lemma \ref{lem1},  we have
\begin{multline*}
    \det S = \det \bigg( (x-2\alpha ) I_{m_1}\bigg) \\
    \cdot \det \bigg(  \Big(x-\alpha(r_1 + n_2)\Big) I_{n_1} - (1-\alpha)^2\Gamma_{A_{\alpha}(G_2)}(x-\alpha n_1)J_{n_1 \times n_1} \\
    -\Big((1-\alpha)R \Big) \Big( \big(x-2\alpha \big) I_{m_1} \Big)^{-1}\Big( (1-\alpha)R^T\Big) \bigg)
\end{multline*}

\begin{multline*}
    = (x-2\alpha)^{m_1}\\
    \cdot \det \bigg(  \Big(x-\alpha r_1 - \alpha n_2\Big) I_{n_1} - (1-\alpha)^2\Big(\Gamma_{A_{\alpha}(G_2)}(x-\alpha n_1) \Big)J_{n_1 \times n_1} -\frac{(1-\alpha)^2}{(x-2\alpha)}RR^T \bigg)
\end{multline*}
Now, by Lemma \ref{lem3}, we have
\begin{multline}\label{eq21}
    \det S= (x-2\alpha)^{m_1}  \cdot \det \bigg(  \big(x-\alpha r_1 - \alpha n_2\big) I_{n_1} - \frac{(1-\alpha)^2}{(x-2\alpha)}RR^T \bigg)\\
    \cdot \bigg( 1-  (1-\alpha)^2\Gamma_{A_{\alpha}(G_2)}(x-\alpha n_1) \Gamma_{\frac{(1-\alpha)^2}{(x-2\alpha)}RR^T}(x-\alpha r_1 -\alpha n_2)  \bigg).
\end{multline}

It is clear from equation \eqref{eq4} that the eigenvalues of the matrix $RR^T$ are $r_1 + \lambda_i(A(G_1))$ for $i=1,2, \ldots, n_1$. Again each row sum of the matrix $\frac{(1-\alpha)^2}{(x-2\alpha)} RR^T$ is  $\frac{2r_1(1-\alpha)^2}{(x-2\alpha)}$. Therefore from \eqref{eq6}, we have
\begin{equation*}\label{eq22}
    \Gamma_{\frac{(1-\alpha)^2}{(x-2\alpha)}RR^T}(x) = \frac{n_1}{x-\frac{2r_1(1-\alpha)^2}{(x-2\alpha)}} .
\end{equation*}
Thus \eqref{eq21} reduces to
\begin{multline*}
    \det S = (x-2\alpha)^{m_1}  \cdot \prod_{i=1}^{n_1} \bigg( x-\alpha r_1 - \alpha n_2 - \frac{(1-\alpha)^2}{(x-2\alpha)}\Big(r_1 +\lambda_i(A(G_1))\Big) \bigg)\\
    \cdot \bigg( 1-  (1-\alpha)^2 \frac{n_1(x-2\alpha)}{(x-2\alpha)(x-\alpha r_1 -\alpha n_2) -2r_1(1-\alpha)^2} \Gamma_{A_{\alpha}(G_2)}(x-\alpha n_1)  \bigg)
\end{multline*}
\begin{multline*}
    = (x-2\alpha)^{m_1-n_1}  \cdot \prod_{i=1}^{n_1} \bigg(\big(x-2\alpha\big) \big(x-\alpha r_1 - \alpha n_2\big) - (1-\alpha)^2\Big(r_1 +\lambda_i(A(G_1))\Big) \bigg)\\
    \cdot \frac{ (x-2\alpha)(x-\alpha r_1 -\alpha n_2) -2r_1(1-\alpha)^2  -  (1-\alpha)^2\Gamma_{A_{\alpha}(G_2)}(x-\alpha n_1) \cdot n_1(x-2\alpha)}{(x-2\alpha)(x-\alpha r_1 -\alpha n_2) -2r_1(1-\alpha)^2}
\end{multline*}
\begin{multline*}
    = (x-2\alpha)^{m_1-n_1}  \cdot \prod_{i=2}^{n_1} \bigg(\big(x-2\alpha\big) \big(x-\alpha r_1 - \alpha n_2\big) - (1-\alpha)^2\Big(r_1 +\lambda_i(A(G_1))\Big) \bigg)\\
    \cdot \bigg((x-2\alpha)(x-\alpha r_1 -\alpha n_2) -2r_1(1-\alpha)^2  -  n_1 (1-\alpha)^2(x-2\alpha)\cdot \Gamma_{A_{\alpha}(G_2)}(x-\alpha n_1) \bigg).
\end{multline*}
Since $A_{\alpha}(G_1) = \alpha r_1 I_{n_1} + (1-\alpha)A(G_1)$, the eigenvalues of $A_{\alpha}(G_1)$ are $\lambda_i(A_{\alpha}(G_1)) = \alpha r_1 + (1-\alpha)\lambda_i(A(G))$ for $i=1,2,\ldots,n_1$. Thus
\begin{multline*}
    \det S = (x-2\alpha)^{m_1-n_1} \\ \cdot \prod_{i=2}^{n_1} \bigg(\big(x-2\alpha\big) \big(x-\alpha r_1 - \alpha n_2\big) - (1-\alpha)^2 r_1 -(1-\alpha)\lambda_i\big(A_{\alpha}(G_1)\big) + \alpha (1-\alpha)r_1 \bigg)\\
    \cdot \bigg((x-2\alpha)(x-\alpha r_1 -\alpha n_2) -2r_1(1-\alpha)^2  -  n_1 (1-\alpha)^2(x-2\alpha)\cdot \Gamma_{A_{\alpha}(G_2)}(x-\alpha n_1) \bigg).
\end{multline*}
Simplifying this, we get the required result from \eqref{eq16}.
\end{proof}

Now, in the following corollary, we obtain the $A_{\alpha}$-eigenvalues of $G_1 \dot{\vee} G_2$, where  $G_2$ is an $r_2$-regular graph.

\begin{corollary}\label{cor1.1}
    Let $G_1$ be an $r_1$-regular graph on $n_1$ vertices and $m_1$ edges, and $G_2$ be an $r_2$-regular graph on $n_2$ vertices.
    \begin{enumerate}[label={\upshape \arabic*.}]
        \item If $r_1 =1$, then for each $\alpha \in [0,1]$, the $A_{\alpha}$-spectrum of $G_1 \dot{\vee} G_2$ consists precisely of:
        \begin{enumerate}[label= {\upshape (\roman*)}]
            \item $\alpha (1+n_2)$;
            \item $2\alpha + \lambda_i\big( A_{\alpha}(G_2) \big)$, $i=2,3, \ldots, n_2$ and
            \item three roots of the equation $F(x)=0$, where
            \begin{equation*}
                F(x)=(x-2\alpha -r_2)\big( x^2 -\alpha(3+n_2)x -2(1 -2\alpha - {\alpha}^2n_2) \big) - 2n_2(1-\alpha)^2(x-2\alpha).
            \end{equation*}
        \end{enumerate}

        \item If $r_1 \geq 2$, then for each $\alpha \in [0,1]$, the $A_{\alpha}$-spectrum of $G_1 \dot{\vee} G_2$ consists precisely of:
        \begin{enumerate}[label= {\upshape (\roman*)}]
            \item $2\alpha$, repeated $m_1 -n_1$ times;
            \item $\alpha n_1 + \lambda_i\big(A_{\alpha}(G_2)\big)$, $i=2,3, \ldots, n_2$;
            \item two roots of the equation $G_i(x) =0$ for each $i= 2,3,\ldots,n_1$, where
            \begin{multline*}
                G_i(x) = x^2 -\alpha (2+ r_1 +n_2)x + \alpha(\alpha r_1 +r_1 + 2\alpha n_2)
                \\-(1-\alpha)\big( r_1(1-\alpha) + \lambda_i\big(A_{\alpha}(G_1)\big) \big)
            \end{multline*}
            and
            \item three roots of the equation $F(x)=0$, where
            \begin{multline*}
                F(x)=(x-\alpha n_1 -r_2)\big( x^2 -\alpha(2+r_1+n_2)x -2(r_1 -2\alpha r_1 - {\alpha}^2n_2) \big)
                \\- n_1n_2(1-\alpha)^2(x-2\alpha).
            \end{multline*}
        \end{enumerate}
    \end{enumerate}
\end{corollary}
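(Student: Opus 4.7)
The plan is to deduce the corollary by specializing Theorem \ref{th1} to the case where $G_2$ is $r_2$-regular. The crucial simplification is that, for an $r_2$-regular graph $G_2$, the matrix $A_{\alpha}(G_2)$ has every row sum equal to $r_2$, so equation \eqref{eq6} gives
\[
\Gamma_{A_{\alpha}(G_2)}(x-\alpha n_1) \;=\; \frac{n_2}{x-\alpha n_1 - r_2}.
\]
Moreover, since $r_2$ is a (simple) $A_{\alpha}$-eigenvalue of $G_2$, I can factor
\[
\psi_{A_{\alpha}(G_2)}(x-\alpha n_1) \;=\; (x-\alpha n_1 - r_2)\prod_{i=2}^{n_2}\bigl(x-\alpha n_1 - \lambda_i(A_{\alpha}(G_2))\bigr).
\]
The plan is to substitute both expressions into the three-factor product \eqref{eq11} from Theorem \ref{th1} and collect terms.

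For part 2 ($r_1 \ge 2$), I would proceed as follows. The prefactor $(x-2\alpha)^{m_1-n_1}$ directly contributes the eigenvalue $2\alpha$ with multiplicity $m_1-n_1$, giving (i). The product $\prod_{i=2}^{n_2}(x-\alpha n_1-\lambda_i(A_{\alpha}(G_2)))$ pulled out of $\psi_{A_{\alpha}(G_2)}(x-\alpha n_1)$ gives the eigenvalues in (ii). The middle product over $i=2,\dots,n_1$ in \eqref{eq11} is precisely $\prod_{i=2}^{n_1}G_i(x)$, yielding (iii). The remaining factor is $(x-\alpha n_1-r_2)$ times the last bracket of \eqref{eq11}; clearing the denominator $x-\alpha n_1-r_2$ inside that bracket gives
\[
(x-\alpha n_1-r_2)\bigl(x^2-\alpha(2+r_1+n_2)x-2(r_1-2\alpha r_1-\alpha^2 n_2)\bigr) - n_1 n_2 (1-\alpha)^2(x-2\alpha),
\]
which is $F(x)$, giving (iv). A quick degree count $(m_1-n_1)+(n_2-1)+2(n_1-1)+3 = m_1+n_1+n_2$ confirms everything is accounted for.

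For part 1 ($r_1=1$), the connected hypothesis forces $G_1=K_2$, so $n_1=2$, $m_1=1$, and the exponent $m_1-n_1=-1$ looks problematic: the prefactor $(x-2\alpha)^{-1}$ is only formal. The resolution, which is the main technical point, is to compute the single quadratic $G_2(x)$ in the middle product using $\lambda_2(A_{\alpha}(K_2))=2\alpha-1$; a short calculation shows that its discriminant is a perfect square $\alpha^2(n_2-1)^2$ and that
\[
G_2(x) \;=\; (x-2\alpha)\bigl(x-\alpha(n_2+1)\bigr).
\]
The linear factor $(x-2\alpha)$ cancels exactly against the formal $(x-2\alpha)^{-1}$, which (i) removes the spurious pole, (ii) produces the simple eigenvalue $\alpha(1+n_2)$ of part 1(i), and (iii) leaves the product over $i=2,\dots,n_2$ as eigenvalues of the form $2\alpha+\lambda_i(A_{\alpha}(G_2))$, which is 1(ii). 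Specializing the $F(x)$ of part 2 to $r_1=1$, $n_1=2$ produces exactly the cubic in 1(iii), completing the argument. The main obstacle is this bookkeeping for $r_1=1$: verifying the cancellation is not merely formal but actually yields a polynomial of the correct degree $m_1+n_1+n_2 = n_2+3$ with the claimed roots.
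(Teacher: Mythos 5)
Your proposal is correct and follows essentially the same route as the paper: substitute $\Gamma_{A_{\alpha}(G_2)}(x-\alpha n_1)=\frac{n_2}{x-\alpha n_1-r_2}$ and the factorization of $\psi_{A_{\alpha}(G_2)}(x-\alpha n_1)$ through the eigenvalue $r_2$ into Theorem \ref{th1}, then specialize to $n_1=2$, $m_1=1$ when $r_1=1$. The only difference is that you make explicit the factorization $G_2(x)=(x-2\alpha)\bigl(x-\alpha(1+n_2)\bigr)$ that absorbs the formal $(x-2\alpha)^{-1}$, a cancellation the paper performs silently when it states that the result follows from substituting the particular values into its equation \eqref{eq28}.
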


\begin{proof}
    Since $G_2$ is an $r_2$-regular graph on $n_2$ vertices, each row sum of the matrix $A_{\alpha}(G_2)$ is $r_2$. Therefore from \eqref{eq6}, we have
        \begin{equation}\label{eq27}
            \Gamma_{A_{\alpha}(G_2)}(x-\alpha n_1) = \frac{n_2}{x - \alpha n_1 -r_2}.
        \end{equation}
        Again $r_2$ is an eigenvalue of $A(G_2)$, therefore $r_2$ is an eigenvalue of $A_{\alpha}(G_2) = \alpha r_2 I_{n_2} + (1-\alpha)A(G_2)$.
        Using this and equation \eqref{eq27} in \eqref{eq11}, we get
        \begin{multline} \label{eq28}
            \psi_{A_{\alpha}(G_1 \dot{\vee} G_2)}(x) = (x-2\alpha)^{m_1-n_1} \prod_{i=2}^{n_2}\Big(x-\alpha n_1 -\lambda_i\big(A_{\alpha}(G_2)\big)\Big)\\
            \cdot \prod_{i=2}^{n_1}\Big( x^2 - \alpha \big( 2+r_1+n_2 \big)x + \alpha \big( \alpha r_1 +r_1 + 2\alpha n_2 \big) - \big( 1-\alpha \big) \big( r_1(1-\alpha) + \lambda_i(A_{\alpha}(G_1) \big) \big) \Big)\\
            \cdot \Big( \big( x-\alpha n_1 -r_2 \big) \big( x^2 -\alpha(2+r_1 +n_2)x -2(r_1 -2\alpha r_1 -\alpha^2 n_2) \big)  - n_1 n_2(1-\alpha)^2(x-2\alpha) \Big).
        \end{multline}
    \begin{enumerate}
        \item If $r_1 =1$, the only possibility for $G_1$ is  $P_2$, the path on two vertices. In this case, $n_1 =2$ and $m_1 =1$. Using these particular values of $r_1$, $n_1$ and $m_1$ in \eqref{eq28}, we get the desired result.

        \item If $r_1 \geq 2$, the graph $G_1$ can not have any pendant vertices, so it is not a tree. Thus $m_1 \geq n_1$, and the result follows from  \eqref{eq28}.
    \end{enumerate}
\end{proof}
Taking $G_2$ as $K_{p,q}$, we obtain the $A_{\alpha}$-eigenvalues of $G_1 \dot{\vee} G_2$ in the next corollary.

\begin{corollary}\label{cor1.2}
    Let $G_1$ be an $r_1$-regular graph with $n_1$ vertices and $m_1$ edges. Let $p,q \geq 1$ be integers and $G_2 = K_{p,q}$.
   \begin{enumerate}[label={\upshape \arabic*.}]
        \item If $r_1 =1$, then for each $\alpha \in [0,1]$, the $A_{\alpha}$-spectrum of $G_1 \dot{\vee} G_2$ consists precisely of:
        \begin{enumerate}[label= {\upshape (\roman*)}]
            \item $\alpha (1+p+q)$;
            \item $\alpha (p+2)$, repeated $q-1$ times;
            \item $\alpha (q+2)$, repeated $p-1$ times and
            \item four roots of the equation $F(x)=0$, where
            \begin{multline*}
                F(x) = \big( x^2 -\alpha(3+p+q)x - 2(1-2\alpha - {\alpha}^2p -{\alpha}^2q) \big) \\
                \cdot \big( x^2 -\alpha (4 + p +q)x +( 4{\alpha}^2 + 2{\alpha}^2p + 2{\alpha}^2q +2\alpha pq -pq) \big)\\
                -2(1-\alpha)^2(x-2\alpha) \big( (x-2\alpha)(p+q) -\alpha(p+q)^2 + 2pq \big).
            \end{multline*}
        \end{enumerate}

        \item If $r_1 \geq 2$, then for each $\alpha \in [0,1]$, the $A_{\alpha}$-spectrum of $G_1 \dot{\vee} G_2$ consists precisely of:
        \begin{enumerate}[label= {\upshape (\roman*)}]
            \item $2\alpha$, repeated $m_1 -n_1$ times;
            \item $\alpha (n_1 +p)$, repeated $q-1$ times;
            \item $\alpha (n_1 +q)$, repeated $p-1$ times;
            \item two roots of the equation $G_i(x) = 0$ for each $i=2,3,\ldots,n_1$, where
            \begin{multline*}
                G_i(x) = x^2 - \alpha (2+r_1+p+q)x + \alpha (\alpha r_1 + r_1 + 2\alpha p + 2\alpha q)\\
                -(1-\alpha)\big( (1-\alpha)r_1 + \lambda_i\big(A_{\alpha}(G_1)\big)\big)
            \end{multline*}
            and
            \item four roots of the equation $F(x)=0$, where
            \begin{multline*}
                F(x) = \big( x^2 -\alpha (2+r_1+p+q)x - 2 ( r_1-2\alpha r_1 - {\alpha}^2p -{\alpha}^2q)   \big) \\
                \cdot \big( x^2 -\alpha( 2n_1 + p +q)x +( {\alpha}^2n_1^2 + {\alpha}^2n_1p + {\alpha}^2n_1q +2\alpha pq -pq)  \big)\\
                -n_1(1-\alpha)^2(x-2\alpha)\big((x-\alpha n_1)(p+q) -\alpha( p+q )^2 +2pq \big).
            \end{multline*}
        \end{enumerate}
    \end{enumerate}
\end{corollary}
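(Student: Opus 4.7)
The plan is to specialize Theorem \ref{th1} to $G_2 = K_{p,q}$ by substituting the $A_\alpha$-characteristic polynomial of $K_{p,q}$ (Lemma \ref{lem4}) into the factor $\psi_{A_\alpha(G_2)}(x - \alpha n_1)$ of \eqref{eq11}, and its $A_\alpha$-coronal (Lemma \ref{lem6}) into the last bracketed factor. The first step is to factor
\begin{equation*}
\psi_{A_\alpha(K_{p,q})}(y) = (y - \alpha p)^{q-1}(y - \alpha q)^{p-1}\, Q(y),
\end{equation*}
where $Q(y) = y^2 - \alpha(p+q)y + (2\alpha-1)pq$ has as roots the two ``extreme'' eigenvalues from Lemma \ref{lem4}, and is precisely the denominator of $\Gamma_{A_\alpha(K_{p,q})}(y)$ in Lemma \ref{lem6}. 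Setting $y = x - \alpha n_1$, the two monomial factors contribute eigenvalues $\alpha(n_1 + p)$ of multiplicity $q-1$ and $\alpha(n_1 + q)$ of multiplicity $p-1$ directly; these yield items (ii)–(iii) in case 2, and items (ii)–(iii) in case 1 after substituting $n_1 = 2$.

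For the quartic $F(x)$, I would multiply the last bracket of \eqref{eq11} through by $Q(x - \alpha n_1)$ so as to clear the denominator of $\Gamma_{A_\alpha(K_{p,q})}(x - \alpha n_1)$. This produces $Q(x - \alpha n_1) \cdot \bigl[x^2 - \alpha(2 + r_1 + n_2)x - 2(r_1 - 2\alpha r_1 - \alpha^2 n_2)\bigr] - n_1(1-\alpha)^2(x - 2\alpha) \cdot \bigl[(p+q)(x - \alpha n_1) - \alpha(p+q)^2 + 2pq\bigr]$. Expanding $Q(x - \alpha n_1)$ with $n_2 = p + q$ gives the second quadratic $x^2 - \alpha(2n_1 + p + q)x + \alpha^2 n_1^2 + \alpha^2 n_1 p + \alpha^2 n_1 q + 2\alpha pq - pq$, matching case 2 and specializing to case 1 on setting $n_1 = 2$; the first bracket already has the stated form with $n_2 = p + q$.

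The case split then comes from the factor $(x - 2\alpha)^{m_1 - n_1}$ and the product $\prod_{i=2}^{n_1}$ in \eqref{eq11}. In case 2 ($r_1 \geq 2$) we have $m_1 \geq n_1$, so $2\alpha$ appears with multiplicity $m_1 - n_1$ and the product survives intact as the quadratics $G_i(x)$. Case 1 forces $G_1 = P_2$, so $n_1 = 2$, $m_1 = 1$, and $\lambda_2(A_\alpha(P_2)) = 2\alpha - 1$; a short computation shows the single $i = 2$ quadratic factors as $(x - 2\alpha)(x - \alpha(1 + p + q))$, and the $(x - 2\alpha)$ factor cancels the $(x - 2\alpha)^{-1} = (x - 2\alpha)^{m_1 - n_1}$, leaving only the eigenvalue $\alpha(1 + p + q)$ of item (i). I expect the main obstacle to be purely the algebraic bookkeeping needed to expand $F(x)$ into the stated form and to verify signs after combining the two quadratics with the coronal correction — routine but lengthy, with no conceptual difficulty beyond the substitutions outlined above.
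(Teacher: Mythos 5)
Your proposal is correct and follows essentially the same route as the paper: substitute Lemma \ref{lem4} and Lemma \ref{lem6} into \eqref{eq11}, cancel the two ``extreme'' eigenvalue factors of $\psi_{A_\alpha(K_{p,q})}(x-\alpha n_1)$ against the denominator of the coronal to produce the quartic $F(x)$, and then split on $r_1=1$ versus $r_1\geq 2$. Your explicit factorization of the $i=2$ quadratic as $(x-2\alpha)\bigl(x-\alpha(1+p+q)\bigr)$ in the $P_2$ case, cancelling the $(x-2\alpha)^{m_1-n_1}=(x-2\alpha)^{-1}$ factor, is a correct and welcome elaboration of a step the paper leaves implicit.
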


\begin{proof}
    From Lemma \ref{lem4} and Lemma \ref{lem6}, we have\\\\
    $\sigma(A_{\alpha}(G_2)) = \bigg\{ \frac{\alpha (p+q)+ \sqrt{{\alpha}^2 (p+q)^2 + 4pq(1-2\alpha)}}{2}$, $[\alpha p]^{q-1}$,$ [\alpha q]^{p-1}$, $\frac{\alpha (p+q)- \sqrt{{\alpha}^2 (p+q)^2 + 4pq(1-2\alpha)}}{2} \bigg\}$
    \\\\
    and
    \begin{equation*} \label{eq32}
        \Gamma_{A_{\alpha}(G_2)}(x) = \frac{(p+q)x -\alpha (p + q)^2 + 2pq}{x^2 -\alpha (p+q)x + (2\alpha -1)pq}.
    \end{equation*}
   Using this in \eqref{eq11}, we have
    
    \begin{multline} \label{eq33}
        \psi_{A_{\alpha}(G_1 \dot{\vee} G_2)}(x) = \big(x-2\alpha\big)^{m_1 -n_1} \\
        \cdot \Bigg(x-\alpha n_1 - \frac{\alpha(p+q) + \sqrt{\alpha^2 {(p+q)}^2 + 4pq(1-2\alpha)}}{2}\Bigg)
        \big(x-\alpha n_1 -\alpha p\big)^{q-1} \\
        \cdot \big(x-\alpha n_1 -\alpha q\big)^{p-1}
        \Bigg(x-\alpha n_1 - \frac{\alpha(p+q) - \sqrt{\alpha^2 {(p+q)}^2 + 4pq(1-2\alpha)}}{2}\Bigg) \\
        \cdot \prod_{i=2}^{n_1}\bigg(  x^2 -\alpha \big(2+ r_1 + p+q\big)x + \alpha \big(\alpha r_1 +r_1+2 \alpha(p+q)\big) -\big(1- \alpha\big)\Big( (1-\alpha)r_1 + \lambda_i\big(A_{\alpha}(G_1)\big) \Big) \bigg)\\
        \cdot \bigg( x^2 -\alpha \big(2+r_1+p+q\big)x -2\big(r_1-2\alpha r_1 -\alpha^2(p+q)\big) \\
        -n_1\big(1-\alpha\big)^2\big(x-2\alpha\big) \cdot \frac{(p+q)(x-\alpha n_1) -\alpha (p + q)^2 + 2pq}{(x-\alpha n_1)^2 -\alpha (p+q)(x-\alpha n_1) + (2\alpha -1)pq} \bigg).
    \end{multline}
    The zeros of the denominator of $\frac{(p+q)(x-\alpha n_1) -\alpha (p + q)^2 + 2pq}{(x-\alpha n_1)^2 -\alpha (p+q)(x-\alpha n_1) + (2\alpha -1)pq}$ are
    \begin{multline*}
        \alpha n_1 + \frac{\alpha(p+q) + \sqrt{\alpha^2 {(p+q)}^2 + 4pq(1-2\alpha)}}{2} \enskip \textrm{and}\\
        \alpha n_1 + \frac{\alpha(p+q) - \sqrt{\alpha^2 {(p+q)}^2 + 4pq(1-2\alpha)}}{2}.
    \end{multline*}
    Using this in \eqref{eq33},  we get
    \begin{multline}\label{eq35}
        \psi_{A_{\alpha}(G_1 \dot{\vee} G_2)}(x) = \big(x-2\alpha\big)^{m_1 -n_1} \big(x-\alpha n_1 -\alpha p\big)^{q-1}  \big(x-\alpha n_1 -\alpha q\big)^{p-1}\\
        \cdot \prod_{i=2}^{n_1}\bigg(  x^2 -\alpha \big(2+ r_1 + p+q\big)x + \alpha \big(\alpha r_1 +r_1+2 \alpha(p+q)\big) -\big(1- \alpha\big)\Big( (1-\alpha)r_1 + \lambda_i\big(A_{\alpha}(G_1)\big) \Big) \bigg)\\
        \cdot \Bigg(\Big( x^2 -\alpha \big(2+r_1+p+q\big)x -2\big(r_1-2\alpha r_1 -\alpha^2(p+q)\big)\Big)\\
        \cdot \Big( x^2 -(2\alpha n_1 + \alpha p + \alpha q)x +(\alpha^2 {n_1}^2 + \alpha^2 n_1 p + \alpha^2 n_1 q + 2\alpha pq -pq) \Big) \\
        -n_1\big(1-\alpha\big)^2\big(x-2\alpha\big) \Big((p+q)(x-\alpha n_1) -\alpha (p + q)^2 + 2pq \Big)\Bigg).
    \end{multline}
    \begin{enumerate}
        \item If $r_1 =1$, then $G_1=P_2$. Thus $n_1 =2$ and $m_1 =1$. Using these particular values of $r_1$, $n_1$ and $m_1$ in \eqref{eq35}, we get the desired result.

        \item If $r_1 \geq 2$, the graph $G_1$ is not a tree. Then we have $m_1 \geq n_1$, and the result follows from \eqref{eq35} .
    \end{enumerate}
\end{proof}

Finally, to conclude this section, we provide a construction of new pairs of $A_{\alpha}$-cospectral graphs from a given pair of $A_{\alpha}$-cospectral graphs in the following corollary. 

\begin{corollary}\label{cor1.3}
    \begin{enumerate}[label={\upshape \arabic*.}]
        \item Let $G_1$ and $G_2$ be two $A_{\alpha}$-cospectral regular graphs for $\alpha \in [0,1]$, and let $H$ be an arbitrary graph. Then the graphs $G_1 \dot{\vee} H$ and  $G_2 \dot{\vee} H$ are $A_{\alpha}$-cospectral.
        \item Let $H_1$ and $H_2$ be two $A_{\alpha}$-cospectral graphs with $\Gamma_{A_{\alpha}(H_1)}(x) = \Gamma_{A_{\alpha}(H_2)}(x)$ for $\alpha \in [0,1]$. If $G$ is a regular graph, then the graphs $G\dot{\vee} H_1$ and $G \dot{\vee} H_2$ are $A_{\alpha}$-cospectral.
    \end{enumerate}
\end{corollary}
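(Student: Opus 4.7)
The plan is to read off both cospectrality statements directly from the formula for $\psi_{A_\alpha(G_1 \dot{\vee} G_2)}(x)$ in Theorem \ref{th1}, by checking that every factor in that formula depends only on invariants that are preserved under the given hypotheses. Inspecting \eqref{eq11}, the right-hand side depends on $G_1$ only through the parameters $n_1$, $m_1$, $r_1$ and the multiset $\{\lambda_i(A_\alpha(G_1)) : 2 \le i \le n_1\}$; it depends on $G_2$ only through $n_2$, the characteristic polynomial $\psi_{A_\alpha(G_2)}(x)$, and the coronal $\Gamma_{A_\alpha(G_2)}(x)$. So in each part of the corollary, I just have to verify that the relevant list of invariants is preserved.

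For part 1, I would first argue that two $A_\alpha$-cospectral regular graphs automatically share $n_1$, $r_1$ and $m_1$. Cospectrality forces equality of the order $n_1$ and of the trace, and since for a regular graph $\mathrm{tr}(A_\alpha) = \alpha\cdot 2m_1 = \alpha n_1 r_1$, when $\alpha > 0$ this pins down $r_1$ (and then $m_1 = n_1 r_1/2$). For the exceptional case $\alpha = 0$ I would instead use $\mathrm{tr}(A_\alpha^2) = \mathrm{tr}(A^2) = 2m_1 = n_1 r_1$ to recover $r_1$. With $n_1,r_1,m_1$ common and the full $A_\alpha$-spectra of $G_1,G_2$ equal, the multiset $\{\lambda_i(A_\alpha(\cdot)) : 2\le i\le n_1\}$ (the spectrum with one copy of the Perron eigenvalue $r_1$ removed) also agrees. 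Substituting into \eqref{eq11} with the common graph $H$ therefore yields identical characteristic polynomials, so $G_1 \dot{\vee} H$ and $G_2 \dot{\vee} H$ are $A_\alpha$-cospectral.

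For part 2, cospectrality of $H_1$ and $H_2$ gives $n_2^{(1)} = n_2^{(2)} =: n_2$ and $\psi_{A_\alpha(H_1)}(x) = \psi_{A_\alpha(H_2)}(x)$, so the factor $\psi_{A_\alpha(G_2)}(x - \alpha n_1)$ in \eqref{eq11} is the same whether one plugs in $H_1$ or $H_2$. The extra assumption $\Gamma_{A_\alpha(H_1)}(x) = \Gamma_{A_\alpha(H_2)}(x)$ takes care of the only other $H$-dependent piece, namely the final quadratic bracket containing $\Gamma_{A_\alpha(G_2)}(x-\alpha n_1)$. All remaining factors depend only on the fixed regular graph $G$, so $\psi_{A_\alpha(G \dot{\vee} H_1)}(x) = \psi_{A_\alpha(G \dot{\vee} H_2)}(x)$.

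There is no real obstacle here; the whole proof is essentially a bookkeeping check that each piece of \eqref{eq11} is an invariant of the data that is assumed to match. The only slightly delicate point is the elementary observation used in part 1 that regular graphs with equal $A_\alpha$-spectrum must share the common degree, which I would dispatch with the trace argument above so that both $r_1$ and $m_1$ are automatically common and do not need to be assumed.
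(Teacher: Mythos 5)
Your proposal is correct and follows essentially the same route as the paper: the paper's proof simply notes that $A_{\alpha}$-cospectral regular graphs share regularity, order and size, and then compares the characteristic polynomials produced by Theorem \ref{th1}. The only difference is that you spell out the trace argument (with the $\alpha=0$ case handled via $\mathrm{tr}(A^2)=n_1r_1$) for why cospectral regular graphs share $r_1$ and $m_1$, a fact the paper asserts without proof.
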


\begin{proof}
    If two regular graphs are $A_{\alpha}$-cospectral, then they have same regularity with same number of vertices and  same number of edges. By applying  Theorem \ref{th1} on the concerned graphs and comparing their $A_{\alpha}$-characteristic polynomials, we get the required results.
\end{proof}


\section{$A_{\alpha}$-spectrum of $G_1 \underline{\vee} G_2$} \label{sec5}
This section is about the $A_{\alpha}$-spectrum of $G_1 \underline{\vee} G_2$, the subdivision-edge join of the graphs $G_1$ and $G_2$. We start by obtaining an expression for the $A_{\alpha}$-characteristic polynomial of $G_1 \underline{\vee} G_2$ for an $r_1$-regular graph $G_1$ and an arbitrary graph $G_2$.

\begin{theorem}\label{th2}
    Let $G_1$ be an $r_1$-regular graph on $n_1$ vertices and $m_1$ edges, and $G_2$ be a graph on $n_2$ vertices. Let $\Gamma_{A_{\alpha}(G_2)}(x)$ be the $A_{\alpha}(G_2)$-coronal of $G_2$.\begin{enumerate}[label={\upshape \arabic*.}]
        \item If $r_1 =1$, then for each $\alpha \in [0,1]$, the $A_{\alpha}$-characteristic polynomial of $G_1 \underline{\vee} G_2$ is given by,
        \begin{multline*}
            \psi_{A_{\alpha}(G_1 \underline{\vee} G_2)}(x) =(x-\alpha) \cdot \psi_{A_{\alpha}(G_2)}(x- \alpha)\\
            \cdot \Big( x^2 -\alpha (3+n_2)x +({\alpha}^2n_2 +4\alpha -2) - (1-\alpha)^2(x-\alpha) \Gamma_{A_{\alpha}(G_2)}(x-\alpha) \Big).
        \end{multline*}
        \item If $r_1 \geq 2$, then for each $\alpha \in [0,1]$, the $A_{\alpha}$-characteristic polynomial of $G_1 \underline{\vee} G_2$ is given by,
        \begin{multline*}
            \psi_{A_{\alpha}(G_1 \underline{\vee} G_2)}(x) =\big(x-2\alpha -\alpha n_2\big)^{m_1 -n_1} \cdot \psi_{A_{\alpha}(G_2)}(x- \alpha m_1)\\
            \cdot \Big( x^2 -\alpha (2+r_1+n_2)x +r_1({\alpha}^2n_2 +4\alpha -2) - m_1(1-\alpha)^2(x-\alpha r_1) \Gamma_{A_{\alpha}(G_2)}(x-\alpha m_1) \Big)\\
            \cdot \prod_{i=2}^{n_1}\Big(x^2 -\alpha (2+r_1+n_2)x + r_1({\alpha}^2n_2 +3\alpha -1) - (1-\alpha)\lambda_i\big(A_{\alpha}(G_1)\big)\Big).
         \end{multline*}
    \end{enumerate}
\end{theorem}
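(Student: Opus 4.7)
The plan is to mirror the argument of Theorem \ref{th1}, adapted to the block structure of the subdivision-edge join. With respect to the partition $V(G_1)\cup I(G_1)\cup V(G_2)$, the only $J$-type off-diagonal block now connects $I(G_1)$ with $V(G_2)$, so
\[
A_\alpha(G_1\underline{\vee} G_2)=
\begin{bmatrix}
\alpha r_1 I_{n_1} & (1-\alpha) R & 0_{n_1\times n_2} \\
(1-\alpha) R^T & \alpha(2+n_2) I_{m_1} & (1-\alpha) J_{m_1\times n_2} \\
0_{n_2\times n_1} & (1-\alpha) J_{n_2\times m_1} & A_\alpha(G_2)+\alpha m_1 I_{n_2}
\end{bmatrix}.
\]
First I would apply Lemma \ref{lem1} to the top-left block $(x-\alpha r_1)I_{n_1}$, which collapses cleanly (no $J$-interactions), and then again to the bottom-right block $(x-\alpha m_1)I_{n_2}-A_\alpha(G_2)$. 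This extracts a factor $\psi_{A_\alpha(G_2)}(x-\alpha m_1)$ and leaves an $m_1\times m_1$ determinant $\det T$, where
\[
T=(x-\alpha(2+n_2))I_{m_1}-\tfrac{(1-\alpha)^2}{x-\alpha r_1}R^TR-(1-\alpha)^2\Gamma_{A_\alpha(G_2)}(x-\alpha m_1)\,J_{m_1\times m_1}.
\]

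Substituting \eqref{eq3} turns $T$ into a linear combination of $I_{m_1}$, $A(\mathcal{L}(G_1))$, and $J_{m_1\times m_1}$. Since $G_1$ is $r_1$-regular, $\mathcal{L}(G_1)$ is $(2r_1-2)$-regular, so the non-$J$ part of $T$ has constant row sum and its coronal is given by \eqref{eq6}. Applying Lemma \ref{lem3} to strip off the $J$-term and Lemma \ref{lem10} to decompose the spectrum of $A(\mathcal{L}(G_1))$, the $-2$ eigenvalue of multiplicity $m_1-n_1$ produces the prefactor $(x-\alpha(2+n_2))^{m_1-n_1}$, while each of the remaining eigenvalues $\lambda_i(A(G_1))+r_1-2$ contributes a quadratic $(x-\alpha(2+n_2))(x-\alpha r_1)-(1-\alpha)^2(\lambda_i(A(G_1))+r_1)$. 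Using $\lambda_i(A_\alpha(G_1))=\alpha r_1+(1-\alpha)\lambda_i(A(G_1))$ then rewrites these as the quadratics in the product stated in the theorem. The key simplification is that the $i=1$ factor (with $\lambda_1(A(G_1))=r_1$) equals $(x-\alpha(2+n_2))(x-\alpha r_1)-2r_1(1-\alpha)^2$, which is exactly the denominator of the rational factor $1-c\,\Gamma_B(x)$ produced by Lemma \ref{lem3}; this cancellation is what assembles the single ``quadratic-plus-coronal'' factor of the theorem.

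The case split is then forced by the sign of $m_1-n_1$. For $r_1\geq 2$, $G_1$ has no pendant vertices, so $m_1\geq n_1$ and the resulting expression is precisely part 2. For $r_1=1$ one must have $G_1=P_2$, $(n_1,m_1)=(2,1)$, so $m_1-n_1=-1$; here the lone $i=2$ quadratic, computed from $\lambda_2(A_\alpha(P_2))=2\alpha-1$, factors as $(x-\alpha)(x-\alpha(2+n_2))$, absorbing the factor $(x-\alpha(2+n_2))^{-1}$ and leaving the simplified expression of part 1. I expect the main obstacle to be bookkeeping these cancellations cleanly — in particular the $i=1$-versus-coronal-denominator cancellation and the $P_2$ collapse — without losing or duplicating factors along the way.
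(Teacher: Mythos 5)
Your proposal is correct and follows essentially the same route as the paper: Schur complements against the $V(G_1)$ and $V(G_2)$ blocks, Lemma \ref{lem3} to strip the $J$-term, equation \eqref{eq3} together with Lemma \ref{lem10} to diagonalize $R^TR$, and the cancellation of the $i=1$ quadratic against the coronal denominator. The only (immaterial) difference is that for $r_1=1$ you specialize the general formula to $P_2$ and observe the $(x-2\alpha-\alpha n_2)$ cancellation, whereas the paper handles that case directly by noting $\mathcal{L}(P_2)=K_1$ has the single eigenvalue $0$.
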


\begin{proof}
    Consider the labeling of the graph $G_1 \underline{\vee} G_2$ given in Remark \ref{labeling}. Then, the adjacency matrix of $G_1 \underline{\vee} G_2$ is
\begin{equation}\label{eq38}
    A(G_1 \underline{\vee} G_2) =
    \begin{bmatrix}
        0_{n_1 \times n_1} & R & 0_{n_1 \times n_2}\\
        R^T & 0_{m_1 \times m_1} & J_{m_1 \times n_2}\\
        0_{n_2 \times n_1} & J_{n_2 \times m_1} & A(G_2)
    \end{bmatrix},
\end{equation}
where $R$ is the $0-1$ incidence matrix of $G_1$.\\
The degrees of the vertices of the graph $G_1 \underline{\vee} G_2$ are:
\begin{equation*}\label{eq39}
    \begin{split}
        d_{G_1 \underline{\vee} G_2}(v_i) & = r_1, \enskip \textrm{for} \enskip i=1,2, \ldots , n_1;\\
        d_{G_1 \underline{\vee} G_2}(v^\prime_j) & = 2+n_2, \enskip \textrm{for} \enskip j=1,2, \ldots , m_1;\\
        d_{G_1 \underline{\vee} G_2}(u_k) & = d_{G_2}(u_k) + m_1, \enskip \textrm{for} \enskip k=1,2, \ldots , n_2.
    \end{split}
\end{equation*}
So the diagonal matrix with diagonal entries are the degrees of the vertices of the graph $G_1 \underline{\vee} G_2$ is
\begin{equation}\label{eq40}
    D(G_1 \underline{\vee} G_2) =
    \begin{bmatrix}
        r_1 I_{n_1} & 0_{n_1 \times m_1} & 0_{n_1 \times n_2}\\
        0_{m_ \times n_1} & (2+n_2) I_{m_1} & 0_{m_1 \times n_2}\\
        0_{n_2 \times n_1} & 0_{n_2 \times m_1} & D(G_2) + m_1 I_{n_2}
    \end{bmatrix}.
\end{equation}
Using \eqref{eq38} and \eqref{eq40}, we get the $A_{\alpha}$-matrix of $G_1 \underline{\vee} G_2$ as
\begin{equation*}\label{eq41}
    A_{\alpha}(G_1 \underline{\vee} G_2)=
    \begin{bmatrix}
        \alpha r_1 I_{n_1} & (1-\alpha)R & 0_{n_1 \times n_2}\\
        (1-\alpha)R^T & (2+n_2)\alpha I_{m_1} & (1-\alpha)J_{m_1 \times n_2}\\
        0_{n_2 \times n_1} & (1-\alpha)J_{n_2 \times m_1} & A_{\alpha}(G_2) + \alpha m_1 I_{n_2}
    \end{bmatrix}.
\end{equation*}
Therefore, the characteristic polynomial of $A_{\alpha}(G_1 \underline{\vee} G_2)$ is
\begin{gather}\label{eq42}
    \begin{split}
    \psi_{A_{\alpha}(G_1 \underline{\vee}G_2)}(x) &= \det \Big(xI_{n_1+m_1+n_2}-A_{\alpha}(G_1 \underline{\vee} G_2) \Big)\\
    &= \det \begin{bmatrix}
        \Big(x-\alpha r_1\Big) I_{n_1} & -(1-\alpha)R & 0_{n_1 \times n_2}\\
        -(1-\alpha)R^T & \Big(x-2\alpha -\alpha n_2 \Big) I_{m_1} & -(1-\alpha)J_{m_1 \times n_2}\\
        0_{n_2 \times n_1} & -(1-\alpha)J_{n_2 \times m_1} & (x-\alpha m_1) I_{n_2} - A_{\alpha}(G_2)
            \end{bmatrix}\\
    &=  \det \Big( (x-\alpha m_1) I_{n_2} - A_{\alpha}(G_2) \Big) \cdot \det S \quad (\text{by Lemma } \ref{lem1}),
    \end{split}
\end{gather}
where
\begin{multline*}
    S=
    \begin{bmatrix}
        (x-\alpha r_1) I_{n_1} & -(1-\alpha)R\\
        -(1-\alpha)R^T & (x-2\alpha-\alpha n_2) I_{m_1}
    \end{bmatrix}\\
    -
    \begin{bmatrix}
        0_{n_1 \times n_2}\\
        -(1-\alpha)J_{m_1 \times n_2}
    \end{bmatrix}
    \Big( (x-\alpha m_1) I_{n_2} - A_{\alpha}(G_2) \Big)^{-1}
    \begin{bmatrix}
        0_{n_2 \times n_1} & -(1-\alpha)J_{n_2 \times m_1}
    \end{bmatrix}
\end{multline*}

\begin{flalign*}
    =\begin{bmatrix}
        (x-\alpha r_1) I_{n_1} & -(1-\alpha)R\\
        -(1-\alpha)R^T & (x-2\alpha -\alpha n_2 ) I_{m_1} -(1-{\alpha})^2\Gamma_{A_{\alpha}(G_2)}\big(x-\alpha m_1\big)J_{m_1 \times m_1}
    \end{bmatrix}.    &&
\end{flalign*}
By  Lemma \ref{lem1}, we have
\begin{multline*}
    \det S = \det \Big( (x-\alpha r_1) I_{n_1}\Big) \\
    \cdot \det \Big( (x-2\alpha -\alpha n_2)I_{m_1} - (1-\alpha)^2\Gamma_{A_{\alpha}(G_2)}(x-\alpha m_1)J_{m_1 \times m_1} -\frac{(1-\alpha)^2}{(x-\alpha r_1)}R^TR \Big)
\end{multline*}
\begin{multline}\label{eq46}
    = (x-\alpha r_1)^{n_1}  \cdot \det \Big(  (x-2\alpha - \alpha n_2) I_{m_1} - \frac{(1-\alpha)^2}{(x-\alpha r_1)}R^TR \Big)\\
    \cdot \Big( 1-  (1-\alpha)^2\Gamma_{A_{\alpha}(G_2)}(x-\alpha m_1) \Gamma_{\frac{(1-\alpha)^2}{(x-\alpha r_1)}R^TR}(x-2\alpha -\alpha n_2)  \Big),\\ 
(\text{using Lemma } \ref{lem3}).
\end{multline}
The line graph of an $r$-regular graph is a $(2r-2)$-regular graph. Using this in \eqref{eq3} for the graph $G_1$, we get  each row sum of $R^TR$ is $2r_1$. Therefore using \eqref{eq6}, we get
\begin{equation*}\label{eq47}
    \Gamma_{\frac{(1-\alpha)^2}{(x-\alpha r_1)}R^TR}(x-2\alpha -\alpha n_2) = \frac{m_1}{x-2\alpha -\alpha n_2 -\frac{2r_1(1-\alpha)^2}{(x-\alpha r_1)}}.
\end{equation*}
Using these information, \eqref{eq46} becomes
\begin{multline}\label{eq48}
    \det S = (x-\alpha r_1)^{n_1}  \cdot \prod_{i=1}^{m_1} \Big( x-2\alpha- \alpha n_2 - \frac{(1-\alpha)^2}{(x-\alpha r_1)} \lambda_i\big(A\big(\mathcal{L}(G_1)\big) \big) -2\frac{(1-\alpha)^2}{(x-\alpha r_1)} \Big)\\
    \cdot \bigg( 1-  (1-\alpha)^2\Gamma_{A_{\alpha}(G_2)}(x-\alpha m_1) \cdot \frac{m_1(x-\alpha r_1)}{(x-\alpha r_1)(x-2\alpha -\alpha n_2) -2r_1(1-\alpha)^2}   \bigg),
\end{multline}
where $\mathcal{L}(G_1)$ is the line graph of $G_1$.
\begin{enumerate}[label={\upshape \arabic*.}]
    \item If $r_1=1$, then $G_1=P_2$. Therefore, $\mathcal{L}(G_1)$ is just a vertex and $0$ is the only eigenvalue of $A(\mathcal{L}(G_1))$.  Thus \eqref{eq48} becomes
    \begin{multline*}
        \det S = (x-\alpha)^2\cdot \Big(x-2\alpha -\alpha n_2 - 2\frac{(1-\alpha)^2}{(x-\alpha)}\Big)\\
        \cdot \bigg( 1- (1-\alpha)^2\Gamma_{A_{\alpha}(G_2)}(x-\alpha) \cdot \frac{(x-\alpha)}{(x-\alpha)(x-2\alpha -\alpha n_2) -2(1-\alpha)^2}  \bigg)
    \end{multline*}
    \begin{flalign*}
        = (x-\alpha)\Big((x-\alpha)(x-2\alpha -\alpha n_2) -2(1-\alpha)^2 -(1-\alpha)^2(x-\alpha)\Gamma_{A_{\alpha}(G_2)}(x-\alpha)\Big).
    \end{flalign*}
    After simplifying this, we get the required result from \eqref{eq42}.
    \item If $r_1 \geq 2$, then $m_1 \geq n_1$. By Lemma \ref{lem10} on $G_1$,   the eigenvalues of $A(\mathcal{L}(G_1))$ are $\lambda_i(A(G_1)) + r_1-2$ for $i=1,2, \ldots, n_1$ and $-2$, repeated for $(m_1 -n_1)$ times.
    Using these information, \eqref{eq48} becomes
    \begin{multline*}
        \det S = (x-\alpha r_1)^{n_1}\cdot \frac{1}{(x-\alpha r_1)^{m_1}}\\
        \cdot \prod_{i=1}^{n_1} \Big( (x-\alpha r_1)(x-2\alpha- \alpha n_2) - (1-\alpha)^2 \big(\lambda_i(A(G_1))+r_1 -2) \big) -2(1-\alpha)^2 \Big)\\
        \cdot \prod_{n_1 +1}^{m_1}  \Big( (x-\alpha r_1)(x-2\alpha- \alpha n_2) - (1-\alpha)^2(-2) - 2(1-\alpha)^2 \Big)\\
        \cdot \frac{ (x-\alpha r_1)(x-2\alpha -\alpha n_2) -2r_1(1-\alpha)^2-m_1(1-\alpha)^2 (x-\alpha r_1) \Gamma_{A_{\alpha}(G_2)}(x-\alpha m_1)}{(x-\alpha r_1)(x-2\alpha -\alpha n_2) -2r_1(1-\alpha)^2 }
    \end{multline*}
    \begin{multline}\label{eq52}
        = (x-2\alpha -\alpha n_2)^{m_1 -n_1}\\
        \cdot \prod_{i=2}^{n_1} \Big( (x-\alpha r_1)(x-2\alpha- \alpha n_2) - (1-\alpha)^2 \big(\lambda_i(A(G_1))+r_1 \big) \Big)\\
        \cdot
        \Big((x-\alpha r_1)(x-2\alpha -\alpha n_2) -2r_1(1-\alpha)^2-m_1(1-\alpha)^2 (x-\alpha r_1) \Gamma_{A_{\alpha}(G_2)}(x-\alpha m_1)\Big).
    \end{multline}
    Since $G_1$ is an $r_1$-regular graph, therefore
    \begin{equation*}\label{eq53}
        A_{\alpha}(G_1)=\alpha r_1 I_{n_1} +(1-\alpha)A(G_1).
    \end{equation*}
    Thus
    \begin{equation*}\label{eq54}
        \lambda_i\big(A_{\alpha}(G_1)\big)= \alpha r_1 + (1-\alpha)\lambda_i\big(A(G_1)\big) \enskip \textrm{for} \enskip i=1,2,\ldots, n_1,
    \end{equation*}
    and hence,
    \begin{equation}\label{eq55}
        \lambda_i\big(A(G_1)\big) = \frac{1}{(1-\alpha)}\Big(\lambda_i\big(A_{\alpha}(G_1)\big) - \alpha r_1\Big)\enskip \textrm{for} \enskip i=1,2,\ldots,n_1.
    \end{equation}
    Now  replace $\lambda_i\big(A(G_1)\big)$ in \eqref{eq52} using \eqref{eq55}. After simplifying that, we get the desired result from \eqref{eq42}.
\end{enumerate}
\end{proof}

Now, in the following corollary, we obtain the $A_{\alpha}$-eigenvalues of $G_1 \underline{\vee} G_2$ taking $G_2$ as an $r_2$-regular graph.

\begin{corollary}\label{cor2.1}
    Let $G_1$ be an $r_1$-regular graph on $n_1$ vertices and $m_1$ edges, and $G_2$ be an $r_2$-regular graph on $n_2$ vertices.
    \begin{enumerate}[label={\upshape \arabic*.}]
        \item If $r_1 =1$, then for each $\alpha \in [0,1]$, the $A_{\alpha}$-spectrum of $G_1 \underline{\vee} G_2$ consists precisely of:
        \begin{enumerate}[label= {\upshape (\roman*)}]
            \item $\alpha$;
            \item $\alpha + \lambda_i\big( A_{\alpha}(G_2) \big)$, $i=2,3, \ldots, n_2$ and
            \item three roots of the equation $F(x)=0$, where
            \begin{equation*}\label{eq56}
                F(x)=(x-\alpha -r_2)\big( x^2 -\alpha(3+n_2)x +({\alpha}^2n_2 +4\alpha -2) \big) - n_2(1-\alpha)^2(x-\alpha).
            \end{equation*}
        \end{enumerate}

        \item If $r_1 \geq 2$, then for each $\alpha \in [0,1]$, the $A_{\alpha}$-spectrum of $G_1 \underline{\vee} G_2$ consists precisely of:
        \begin{enumerate}[label= {\upshape (\roman*)}]
            \item $\alpha(2+n_2)$, repeated $m_1 -n_1$ times;
            \item $\alpha m_1 + \lambda_i\big(A_{\alpha}(G_2)\big)$, $i=2,3, \ldots, n_2$;
            \item two roots of the equation $G_i(x) =0$ for each $i= 2,3,\ldots,n_1$, where
            \begin{equation*}\label{eq57}
                G_i(x) = x^2 -\alpha(2+ r_1 +n_2)x + ({\alpha}^2 r_1 n_2 +3\alpha r_1 -r_1) - (1-\alpha) \lambda_i\big(A_{\alpha}(G_1)\big)
            \end{equation*}
            and
            \item three roots of the equation $F(x)=0$, where
            \begin{multline*}
                F(x)=(x-\alpha m_1 -r_2)\big( x^2 -\alpha(2+r_1+n_2)x +r_1({\alpha}^2n_2 +4\alpha -2) \big)\\
                -m_1n_2(1-\alpha)^2(x-\alpha r_1) .
            \end{multline*}
        \end{enumerate}
    \end{enumerate}
\end{corollary}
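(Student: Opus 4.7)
The plan is to apply Theorem \ref{th2} with the regularity hypothesis on $G_2$ to reduce the coronal to an explicit rational function, then to recognize which linear factors cancel against the determinantal factor, leaving the spectrum in the stated form.

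First I would compute $\Gamma_{A_{\alpha}(G_2)}$. Because $G_2$ is $r_2$-regular, every row sum of $A_{\alpha}(G_2) = \alpha r_2 I_{n_2} + (1-\alpha) A(G_2)$ equals $r_2$, so equation \eqref{eq6} yields
\[
\Gamma_{A_{\alpha}(G_2)}(x - \alpha m_1) \;=\; \frac{n_2}{x - \alpha m_1 - r_2},
\]
with the obvious modification $m_1 \mapsto 1$ in case (1). In addition, regularity makes $r_2$ an eigenvalue of $A_{\alpha}(G_2)$, so $\psi_{A_{\alpha}(G_2)}(x - \alpha m_1)$ contains the linear factor $(x - \alpha m_1 - r_2)$, and the remaining factors are $\prod_{i=2}^{n_2}\bigl(x - \alpha m_1 - \lambda_i(A_{\alpha}(G_2))\bigr)$, giving the eigenvalues listed in item (ii) of each case.

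Next I would substitute these into the formula of Theorem \ref{th2} and clear the denominator $(x - \alpha m_1 - r_2)$ appearing in the coronal. In case (2) the ``mixed'' factor
\[
x^2 - \alpha(2+r_1+n_2)x + r_1(\alpha^2 n_2 + 4\alpha - 2) - m_1(1-\alpha)^2(x-\alpha r_1)\,\Gamma_{A_{\alpha}(G_2)}(x-\alpha m_1)
\]
becomes a rational function whose numerator is precisely the cubic $F(x)$ of item (iv) and whose denominator is $(x - \alpha m_1 - r_2)$; this denominator cancels exactly the factor of $\psi_{A_{\alpha}(G_2)}(x-\alpha m_1)$ corresponding to the eigenvalue $r_2$, leaving the three roots of $F(x)=0$. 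The factor $(x - 2\alpha - \alpha n_2)^{m_1 - n_1}$ from Theorem \ref{th2} directly supplies item (i), while the product $\prod_{i=2}^{n_1} G_i(x)$ is already in the stated form and supplies item (iii). Case (1) is handled identically: here $G_1 = P_2$ with $n_1 = 2, m_1 = 1$, so the product over $i=2,\ldots,n_1$ has a single factor $G_2(x)$ whose contribution together with the leading $(x-\alpha)$ from $\psi_{A_{\alpha}(G_2)}(x-\alpha)$ and the coronal-cleared cubic produces items (i)--(iii).

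The proof is therefore essentially bookkeeping: substitute, clear the denominator, observe the cancellation against the eigenvalue $r_2$ of $A_{\alpha}(G_2)$, and match the resulting factorization against the lists. The main thing to watch is that $r_2$ must indeed appear with multiplicity at least one inside $\psi_{A_{\alpha}(G_2)}(x-\alpha m_1)$; this is guaranteed by regularity of $G_2$ (via the constant row-sum argument), and so the cancellation is legitimate even when $r_2$ coincides with one of the other $\lambda_i(A_{\alpha}(G_2))$. I do not expect any genuine obstacle --- the only mild subtlety is treating the degenerate case $r_1 = 1$ separately, exactly as Theorem \ref{th2} already does, so that the ``$(x-2\alpha-\alpha n_2)^{m_1-n_1}$'' factor is absent.
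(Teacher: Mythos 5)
Your proposal is correct and follows exactly the route the paper takes (the paper's proof is just ``similar to that of Corollary \ref{cor1.1}'', i.e.\ substitute $\Gamma_{A_{\alpha}(G_2)}(x-\alpha m_1)=n_2/(x-\alpha m_1-r_2)$ via \eqref{eq6}, cancel that denominator against the factor of $\psi_{A_{\alpha}(G_2)}(x-\alpha m_1)$ coming from the eigenvalue $r_2$, and read off the factorization). One small slip in your case-(1) paragraph: Theorem \ref{th2} with $r_1=1$ has no product over $i=2,\ldots,n_1$, and the eigenvalue $\alpha$ comes from the explicit prefactor $(x-\alpha)$ in that formula, not from $\psi_{A_{\alpha}(G_2)}(x-\alpha)$; this does not affect the argument.
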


\begin{proof}
    Proof is similar to that of Corollary \ref{cor1.1}.
\end{proof}

Taking $G_2$ as $K_{p,q}$, we obtain the $A_{\alpha}$-eigenvalues of $G_1 \underline{\vee} G_2$ in the next corollary.

\begin{corollary}\label{cor2.2}
    Let $G_1$ be an $r_1$-regular graph on $n_1$ vertices and $m_1$ edges. Let $p,q \geq 1$ be integers and $G_2 = K_{p,q}$.
   \begin{enumerate}[label={\upshape \arabic*.}]
        \item If $r_1 =1$, then for each $\alpha \in [0,1]$, the $A_{\alpha}$-spectrum of $G_1 \underline{\vee} G_2$ consists precisely of:
        \begin{enumerate}[label= {\upshape (\roman*)}]
            \item $\alpha$;
            \item $\alpha (1+p)$, repeated $q-1$ times;
            \item $\alpha (1+q)$, repeated $p-1$ times and
            \item four roots of the equation $F(x)=0$, where
            \begin{multline*}
                F(x) = \big( x^2 -\alpha(3+p+q)x + ({\alpha}^2p +{\alpha}^2q +4\alpha -2) \big) \\
                \cdot \big( x^2 -\alpha (2 + p +q)x +( {\alpha}^2 + {\alpha}^2p + {\alpha}^2q +2\alpha pq -pq) \big)\\
                -(1-\alpha)^2(x-\alpha) \big( (x-\alpha)(p+q) -\alpha(p+q)^2 + 2pq \big).
            \end{multline*}
        \end{enumerate}

        \item If $r_1 \geq 2$, then for each $\alpha \in [0,1]$, the $A_{\alpha}$-spectrum of $G_1 \underline{\vee} G_2$ consists precisely of:
        \begin{enumerate}[label= {\upshape (\roman*)}]
            \item $\alpha(2+p+q)$, repeated $m_1 -n_1$ times;
            \item $\alpha (m_1 +p)$, repeated $q-1$ times;
            \item $\alpha (m_1 +q)$, repeated $p-1$ times;
            \item two roots of the equation $G_i(x) = 0$ for each $i=2,3,\ldots,n_1$, where
            \begin{multline*}
                G_i(x) = x^2 - \alpha (2+r_1+p+q)x + ({\alpha}^2 r_1 p + {\alpha}^2 r_1 q + 3\alpha r_1 - r_1)\\
                -(1-\alpha)  \lambda_i\big(A_{\alpha}(G_1)\big)
            \end{multline*}
            and
            \item four roots of the equation $F(x)=0$, where
            \begin{multline*}
                F(x) = \big( x^2 -\alpha (2+r_1+p+q)x + r_1 ({\alpha}^2p +{\alpha}^2q +4\alpha -2)   \big) \\
                \cdot \big( x^2 -\alpha( 2m_1 + p +q)x +( {\alpha}^2m_1^2 + {\alpha}^2m_1p + {\alpha}^2m_1q +2\alpha pq -pq)  \big)\\
                -m_1(1-\alpha)^2(x-\alpha r_1)\big((x-\alpha m_1)(p+q) -\alpha( p+q )^2 +2pq \big).
            \end{multline*}
        \end{enumerate}
    \end{enumerate}
\end{corollary}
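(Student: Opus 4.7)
The plan is to mirror the derivation of Corollary \ref{cor1.2}: start from the characteristic polynomial formula given by Theorem \ref{th2}, substitute the explicit $A_\alpha$-spectrum of $K_{p,q}$ from Lemma \ref{lem4} and the explicit $A_\alpha$-coronal from Lemma \ref{lem6}, and observe that the denominator of $\Gamma_{A_\alpha(K_{p,q})}$ cancels the two ``isolated'' linear factors of $\psi_{A_\alpha(K_{p,q})}$ that correspond to the non-$\alpha p$, non-$\alpha q$ eigenvalues. This cancellation is the whole point of the argument and reduces the analysis to recognising a quartic polynomial.

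More concretely, I would first set $y = x - \alpha m_1$ in case (2) (and $y = x-\alpha$ in case (1), since then $m_1 = 1$). By Lemma \ref{lem4},
\[
\psi_{A_\alpha(K_{p,q})}(y) = (y-\alpha p)^{q-1}(y-\alpha q)^{p-1}\bigl(y^2 - \alpha(p+q)y + (2\alpha -1)pq\bigr),
\]
where I have used the fact that the product of the two ``quadratic'' eigenvalues is $(2\alpha -1)pq$ and their sum is $\alpha(p+q)$. By Lemma \ref{lem6}, the denominator of $\Gamma_{A_\alpha(K_{p,q})}(y)$ is precisely this same quadratic $y^2 - \alpha(p+q)y + (2\alpha-1)pq$. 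Hence, when the factor
\[
\psi_{A_\alpha(K_{p,q})}(y)\cdot\Bigl(x^2 - \alpha(2+r_1+n_2)x + r_1(\alpha^2 n_2 + 4\alpha - 2) - m_1(1-\alpha)^2(x-\alpha r_1)\Gamma_{A_\alpha(K_{p,q})}(y)\Bigr)
\]
appearing in Theorem \ref{th2} is expanded, the $y^2 - \alpha(p+q)y + (2\alpha-1)pq$ in the denominator of $\Gamma_{A_\alpha(K_{p,q})}(y)$ cancels with the quadratic factor of $\psi_{A_\alpha(K_{p,q})}(y)$. What remains is the product of $(y-\alpha p)^{q-1}(y-\alpha q)^{p-1}$ with a quartic in $x$ obtained by multiplying the two quadratic pieces and subtracting $m_1(1-\alpha)^2(x-\alpha r_1)$ times the numerator $(p+q)y - \alpha(p+q)^2 + 2pq$ of $\Gamma_{A_\alpha(K_{p,q})}(y)$.

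Substituting $y = x -\alpha m_1$ into the surviving quartic yields exactly the polynomial $F(x)$ in part (2)(v); the factors $(y-\alpha p)^{q-1}(y-\alpha q)^{p-1}$ give items (ii) and (iii), while the $(x-\alpha(2+p+q))^{m_1-n_1}$ prefactor from Theorem \ref{th2}  (with $n_2 = p+q$) gives (i), and the quadratics $G_i(x)$ from Theorem \ref{th2} specialise (with $n_2 = p+q$) to the $G_i(x)$ in (iv). Case (1) is the same calculation specialised to $G_1 = P_2$, so that $r_1 = 1$, $n_1 = 2$, $m_1 = 1$, the quadratic factor disappears, and the $\prod_{i=2}^{n_1}$ range is empty.

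The only real obstacle is bookkeeping: verifying that after multiplying out and using $n_2 = p+q$ the expression produced by Theorem \ref{th2} matches the stated $F(x)$ verbatim, namely
\[
\bigl(x^2 -\alpha(2+r_1+p+q)x + r_1(\alpha^2 p + \alpha^2 q + 4\alpha -2)\bigr)\bigl(x^2 - \alpha(2m_1+p+q)x + \alpha^2 m_1^2 + \alpha^2 m_1 p + \alpha^2 m_1 q + 2\alpha pq - pq\bigr)
\]
minus $m_1(1-\alpha)^2(x-\alpha r_1)\bigl((x-\alpha m_1)(p+q) - \alpha(p+q)^2 + 2pq\bigr)$. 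This is a direct expansion that parallels the one carried out in the proof of Corollary \ref{cor1.2}.
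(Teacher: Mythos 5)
Your proposal is correct and takes essentially the same route as the paper: the paper's proof of Corollary \ref{cor2.2} is given only as ``similar to that of Corollary \ref{cor1.2}'', and that template is exactly what you describe---substitute the spectrum and coronal of $K_{p,q}$ from Lemmas \ref{lem4} and \ref{lem6} into Theorem \ref{th2}, cancel the quadratic denominator of $\Gamma_{A_\alpha(K_{p,q})}$ against the quadratic factor of $\psi_{A_\alpha(K_{p,q})}$, and read off the remaining factors. One cosmetic slip: for $G_1=P_2$ the range $\prod_{i=2}^{n_1}$ with $n_1=2$ is not empty; rather, case 1 of Theorem \ref{th2} is a separately stated formula containing no such product, which is what you in fact use, so the derivation is unaffected.
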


\begin{proof}
    Proof is similar to that of Corollary \ref{cor1.2}.
\end{proof}

Finally, to conclude this section, we provide a construction of new pairs of $A_{\alpha}$-cospectral graphs from a given pair of $A_{\alpha}$-cospectral graphs in the following corollary.

\begin{corollary}\label{cor2.3}
    \begin{enumerate}[label={\upshape \arabic*.}]
        \item Let $G_1$ and $G_2$ be two $A_{\alpha}$-cospectral regular graphs for $\alpha \in [0,1]$, and let $H$ be an arbitrary graph. Then the graphs  $G_1 \underline{\vee} H$ and  $G_2 \underline{\vee} H$ are $A_{\alpha}$-cospectral.
        \item Let $H_1$ and $H_2$ be two $A_{\alpha}$-cospectral graphs with $\Gamma_{A_{\alpha}(H_1)}(x) = \Gamma_{A_{\alpha}(H_2)}(x)$ for $\alpha \in [0,1]$. If $G$ is a regular graph, then the graphs $G\underline{\vee} H_1$ and $G  \underline{\vee} H_2$ are $A_{\alpha}$-cospectral.
    \end{enumerate}
\end{corollary}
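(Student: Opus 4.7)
The plan is to mimic the proof of Corollary \ref{cor1.3} by direct substitution into the characteristic polynomial formula established in Theorem \ref{th2}. Both parts reduce to checking that every quantity appearing in that formula is the same for the two graphs being compared.

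For part $1$, I would first record the standard fact that two $A_{\alpha}$-cospectral regular graphs necessarily agree in the parameters $n_1$, $r_1$, and hence $m_1 = n_1 r_1/2$: the order is the size of the matrix, the regularity can be recovered from the spectrum (for instance, as $\lambda_1$, or via $\mathrm{tr}\,A_{\alpha}/n$), and then $m_1$ follows. So if $G_1$ and $G_2$ are $A_{\alpha}$-cospectral regular graphs, they share $(n_1,r_1,m_1)$ and $\{\lambda_i(A_{\alpha}(G_1))\}_{i=1}^{n_1}=\{\lambda_i(A_{\alpha}(G_2))\}_{i=1}^{n_1}$. I would then inspect the expression for $\psi_{A_{\alpha}(G_1\underline{\vee} H)}(x)$ given in Theorem \ref{th2} (using whichever of the two cases $r_1=1$ or $r_1\geq 2$ applies) and note that every term --- the prefactor $(x-2\alpha-\alpha n_2)^{m_1-n_1}$, the factor $\psi_{A_{\alpha}(H)}(x-\alpha m_1)$, the quadratic involving $r_1$, $n_2$, $m_1$, and $\Gamma_{A_{\alpha}(H)}(x-\alpha m_1)$, as well as the product $\prod_{i=2}^{n_1}(\cdots)$ --- depends only on the shared parameters and on the shared multiset of $A_{\alpha}$-eigenvalues of $G_1$. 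Hence the polynomial is unchanged when $G_1$ is replaced by $G_2$, proving $A_{\alpha}$-cospectrality of $G_1\underline{\vee} H$ and $G_2\underline{\vee} H$.

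For part $2$, the setup is even easier: with $G$ a fixed $r_1$-regular graph, $n_1, r_1, m_1$ and the $A_{\alpha}$-eigenvalues $\lambda_i(A_{\alpha}(G))$ are identical in both polynomials $\psi_{A_{\alpha}(G\underline{\vee} H_1)}(x)$ and $\psi_{A_{\alpha}(G\underline{\vee} H_2)}(x)$. The only $H$-dependent ingredients in Theorem \ref{th2} are $\psi_{A_{\alpha}(H)}(x-\alpha m_1)$ and $\Gamma_{A_{\alpha}(H)}(x-\alpha m_1)$. The hypothesis that $H_1,H_2$ are $A_{\alpha}$-cospectral gives the equality of the former (since the characteristic polynomial is determined by the spectrum, and in particular the order $n_2$ agrees), and the explicit hypothesis $\Gamma_{A_{\alpha}(H_1)}(x)=\Gamma_{A_{\alpha}(H_2)}(x)$ gives the equality of the latter after the shift $x\mapsto x-\alpha m_1$. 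Substituting into Theorem \ref{th2} yields the same polynomial, proving the claim.

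There is essentially no obstacle --- the corollary is a direct consequence of the explicit formula, and the authors even say so for the analogue Corollary \ref{cor1.3}. The only mild point worth a sentence in the write-up is justifying why $A_{\alpha}$-cospectrality of two regular graphs forces equality of $(n_1,r_1,m_1)$, since otherwise one might worry that matching the $\lambda_i$'s is not enough to match the combinatorial parameters appearing as coefficients in Theorem \ref{th2}. Once that is recorded, both statements follow by inspection of the formula.
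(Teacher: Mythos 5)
Your proposal is correct and matches the paper's argument: the paper proves this corollary by noting that $A_{\alpha}$-cospectral regular graphs share the same order, regularity and number of edges, and then comparing the characteristic polynomials produced by Theorem \ref{th2} (exactly as it does for Corollary \ref{cor1.3}). Your additional remark justifying why $(n_1,r_1,m_1)$ are determined by the spectrum is a welcome detail the paper merely asserts.
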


\begin{proof}
    Proof is similar to that of Corollary \ref{cor1.3}.
\end{proof}


\section{$A_{\alpha}$-spectrum of $G_1 \langle \textrm{v} \rangle G_2$} \label{sec6}
In this section, we study about the $A_{\alpha}$-spectrum of $G_1 \langle \textrm{v} \rangle G_2$, the $R$-vertex join of the graphs $G_1$ and $G_2$. We start with obtaining the expression of the $A_{\alpha}$-characteristic polynomial of $G_1 \langle \textrm{v} \rangle G_2$, for an $r_1$-regular graph $G_1$ and an arbitrary graph $G_2$, in the following theorem.

\begin{theorem}\label{th3}
    Let $G_1$ be an $r_1$-regular graph on $n_1$ and $m_1$ edges, and $G_2$ be an arbitrary graph on $n_2$ vertices. Let $\Gamma_{A_{\alpha}(G_2)}(x)$ be the $A_{\alpha}(G_2)$-coronal of $G_2$. Then for each $\alpha \in [0,1]$, the $A_{\alpha}$-characteristic polynomial of $G_1 \langle \emph{v} \rangle G_2$ is given by,
    \begin{multline*}
        \psi_{A_{\alpha}(G_1 \langle \emph{v} \rangle G_2)}(x) =\big(x-2\alpha \big)^{m_1 -n_1} \cdot \psi_{A_{\alpha}(G_2)}(x- \alpha n_1)\\
        \cdot \prod_{i=2}^{n_1}\Big(x^2 -\big(2\alpha +\alpha r_1+\alpha n_2 + \lambda_i\big(A_{\alpha}(G_1)\big)\big)x + 2{\alpha}^2n_2 +3\alpha r_1 -r_1 + (3\alpha-1)\lambda_i\big(A_{\alpha}(G_1)\big)\Big)\\
        \cdot \Big( x^2 -(2\alpha +\alpha r_1 +\alpha n_2 +r_1)x + 2{\alpha}^2n_2 +6\alpha r_1 -2r_1 -n_1(1-\alpha)^2(x-2\alpha) \Gamma_{A_{\alpha}(G_2)}(x-\alpha n_1) \Big).
    \end{multline*}
\end{theorem}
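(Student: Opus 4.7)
The plan is to mirror the Schur-complement template already used in the proofs of Theorems \ref{th1} and \ref{th2}, adapted to the $R$-vertex join. First I would write out $A(G_1 \langle \textrm{v} \rangle G_2)$ and $D(G_1 \langle \textrm{v} \rangle G_2)$ in $3\times 3$ block form according to the partition $V(G_1)\cup I(G_1)\cup V(G_2)$ from Remark \ref{labeling}. The essential difference from the subdivision-vertex join is that the original edges of $G_1$ survive in $\mathcal{R}(G_1)$, so the top-left block of $A(G_1 \langle \textrm{v} \rangle G_2)$ is $A(G_1)$ (rather than zero) and each $v_i\in V(G_1)$ has degree $2r_1+n_2$ in $G_1\langle\textrm{v}\rangle G_2$; the other blocks are the incidence matrix $R$ of $G_1$, the coupling $J_{n_1\times n_2}$, and inserted-vertex degrees equal to $2$, as in \eqref{eq12}--\eqref{eq14}.

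Assembling $A_\alpha(G_1\langle\textrm{v}\rangle G_2)$ from these pieces, I would apply Lemma \ref{lem1} with respect to the $G_2$-block $(x-\alpha n_1)I_{n_2}-A_\alpha(G_2)$. This produces the factor $\psi_{A_\alpha(G_2)}(x-\alpha n_1)$ and leaves an $(n_1+m_1)\times(n_1+m_1)$ Schur complement $S$ whose upper-left $n_1\times n_1$ block carries the coronal correction $-(1-\alpha)^2\Gamma_{A_\alpha(G_2)}(x-\alpha n_1)\,J_{n_1\times n_1}$ coming from the $(1-\alpha)J_{n_1\times n_2}$ coupling. A second application of Lemma \ref{lem1}, this time Schur-complementing the $(x-2\alpha)I_{m_1}$ block, extracts a factor $(x-2\alpha)^{m_1}$ and reduces $S$ to an $n_1\times n_1$ matrix of the form $xI_{n_1}-A^{\ast}-cJ_{n_1\times n_1}$, where
\[
A^{\ast}=\alpha(2r_1+n_2)I_{n_1}+(1-\alpha)A(G_1)+\frac{(1-\alpha)^2}{x-2\alpha}RR^T \quad\text{and}\quad c=(1-\alpha)^2\,\Gamma_{A_\alpha(G_2)}(x-\alpha n_1).
\]
Using the identity $RR^T=A(G_1)+r_1 I_{n_1}$ from \eqref{eq4}, $A^{\ast}$ becomes a polynomial in $A(G_1)$, and in particular $\det(xI_{n_1}-A^{\ast})$ factors over the spectrum of $A(G_1)$.

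Now I would invoke Lemma \ref{lem3} to write $\det(xI_{n_1}-A^{\ast}-cJ_{n_1\times n_1})=(1-c\,\Gamma_{A^{\ast}}(x))\det(xI_{n_1}-A^{\ast})$. Since $G_1$ is $r_1$-regular, every row of $A^{\ast}$ has the common sum $t(x)=\alpha(2r_1+n_2)+(1-\alpha)r_1+\frac{2r_1(1-\alpha)^2}{x-2\alpha}$, so \eqref{eq6} yields the closed form $\Gamma_{A^{\ast}}(x)=n_1/(x-t(x))$. Peeling off the eigenvalue $\lambda_1(A(G_1))=r_1$ (equivalently $\lambda_1(A_\alpha(G_1))=r_1$), the coronal correction combines with that single $\lambda_1$-factor to produce the final bracketed quadratic-plus-coronal expression in the statement, while the remaining $n_1-1$ eigenvalues of $A(G_1)$ contribute the product $\prod_{i=2}^{n_1}(\cdots)$. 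Rewriting via $\lambda_i(A(G_1))=(\lambda_i(A_\alpha(G_1))-\alpha r_1)/(1-\alpha)$, exactly as in the proof of Theorem \ref{th2}, then converts each factor to the form stated.

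The main obstacle is pure algebraic bookkeeping: clearing the denominators $(x-2\alpha)$ that appear both inside $A^{\ast}$ and inside $\Gamma_{A^{\ast}}(x)$, verifying that the prefactor $(x-2\alpha)^{m_1}$ produced by the second Schur step combines correctly with the $n_1$ compensating $(x-2\alpha)$ factors coming from $\det(xI_{n_1}-A^{\ast})$ to yield the clean $(x-2\alpha)^{m_1-n_1}$ of the statement, and confirming that the resulting simplification produces the specific constants $2\alpha^2 n_2+3\alpha r_1-r_1$, $(3\alpha-1)$, and $2\alpha^2 n_2+6\alpha r_1-2r_1$. Structurally, however, the argument is the template already applied twice in the paper and should carry through without surprises.
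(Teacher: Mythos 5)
Your proposal is correct and follows essentially the same route as the paper's proof of Theorem \ref{th3}: two successive applications of the Schur complement formula (Lemma \ref{lem1}), first against the $G_2$-block and then against $(x-2\alpha)I_{m_1}$, followed by Lemma \ref{lem3} together with the constant-row-sum coronal formula \eqref{eq6} and the substitution $\lambda_i(A(G_1))=(\lambda_i(A_\alpha(G_1))-\alpha r_1)/(1-\alpha)$. The only cosmetic difference is that you absorb the diagonal shift $\alpha(r_1+n_2)I_{n_1}$ into your matrix $A^{\ast}$ and evaluate its coronal at $x$, whereas the paper keeps the shift outside and evaluates $\Gamma_{A_{\alpha}(G_1)+\frac{(1-\alpha)^2}{x-2\alpha}RR^T}$ at $x-\alpha r_1-\alpha n_2$; these are identical.
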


\begin{proof}
    The adjacency matrix of $G_1 \langle \textrm{v} \rangle G_2$ is
\begin{equation}\label{eq63}
    A(G_1 \langle \textrm{v} \rangle G_2) =
    \begin{bmatrix}
        A(G_1) & R & J_{n_1 \times n_2}\\
        R^T & 0_{m_1 \times m_1} & 0_{m_1 \times n_2}\\
        J_{n_2 \times n_1} & 0_{n_2 \times m_1} & A(G_2)
    \end{bmatrix},
\end{equation}
where $R$ is the $0-1$ incidence matrix of $G_1$.\\
The degrees of the vertices of the graph $G_1 \langle \textrm{v} \rangle G_2$ are:
\begin{equation*}
    \begin{split}
        d_{G_1 \langle \textrm{v} \rangle G_2}(v_i) & = 2r_1 +n_2, \enskip \textrm{for} \enskip i=1,2, \ldots , n_1;\\
        d_{G_1 \langle \textrm{v} \rangle G_2}(v^\prime_j) & = 2, \enskip \textrm{for} \enskip j=1,2, \ldots , m_1;\\
        d_{G_1 \langle \textrm{v} \rangle G_2}(u_k) & = d_{G_2}(u_k) + n_1, \enskip \textrm{for} \enskip k=1,2, \ldots , n_2.
    \end{split}
\end{equation*}
Therefore, 
\begin{equation}\label{eq65}
    D(G_1 \langle \textrm{v} \rangle G_2) =
    \begin{bmatrix}
        (2r_1+n_2) I_{n_1} & 0_{n_1 \times m_1} & 0_{n_1 \times n_2}\\
        0_{m_1 \times n_1} & 2I_{m_1} & 0_{m_1 \times n_2}\\
        0_{n_2 \times n_1} & 0_{n_2 \times m_1} & D(G_2) + n_1 I_{n_2}
    \end{bmatrix}.
\end{equation}
Using \eqref{eq63} and \eqref{eq65}, we get the $A_{\alpha}$-matrix of $G_1 \langle \textrm{v} \rangle G_2$ as
\begin{equation*}\label{eq66}
    A_{\alpha}(G_1 \langle \textrm{v} \rangle G_2)=
    \begin{bmatrix}
        A_{\alpha}(G_1) +(\alpha r_1 +\alpha n_2)I_{n_1} & (1-\alpha)R & (1-\alpha)J_{n_1 \times n_2}\\
        (1-\alpha)R^T & 2\alpha I_{m_1} & 0_{m_1 \times n_2}\\
        (1-\alpha)J_{n_2 \times n_1} & 0_{n_2 \times m_1} & A_{\alpha}(G_2) + \alpha n_1 I_{n_2}
    \end{bmatrix}.
\end{equation*}
Therefore, the characteristic polynomial of $A_{\alpha}(G_1 \langle \textrm{v} \rangle G_2)$ is
\begin{align}\label{eq67}
    \nonumber &\psi_{A_{\alpha}(G_1 \langle \textrm{v} \rangle G_2)}(x) = \det \Big(xI_{n_1+m_1+n_2}-A_{\alpha}(G_1 \langle \textrm{v} \rangle G_2) \Big)\\
    \nonumber &= \det
    \begin{bmatrix}
        (x-\alpha r_1 -\alpha n_2)I_{n_1}-A_{\alpha}(G_1) & -(1-\alpha)R & -(1-\alpha)J_{n_1 \times n_2}\\
        -(1-\alpha)R^T & (x-2\alpha) I_{m_1} & 0_{m_1 \times n_2}\\
        -(1-\alpha)J_{n_2 \times n_1} & 0_{n_2 \times m_1} & (x-\alpha n_1) I_{n_2} - A_{\alpha}(G_2)
    \end{bmatrix}\\
    &=  \det \Big( (x-\alpha n_1) I_{n_2} - A_{\alpha}(G_2) \Big) \cdot \det S \quad (\text{by Lemma } \ref{lem1}),
\end{align}
where
\begin{multline*}
    S=
    \begin{bmatrix}
        (x-\alpha r_1-\alpha n_2) I_{n_1} - A_{\alpha}(G_1) & -(1-\alpha)R\\
        -(1-\alpha)R^T & (x-2\alpha) I_{m_1}
    \end{bmatrix}\\
    -
    \begin{bmatrix}
        -(1-\alpha)J_{n_1 \times n_2}\\
        0_{m_1 \times n_2}
    \end{bmatrix}
    \Big( (x-\alpha n_1) I_{n_2} - A_{\alpha}(G_2) \Big)^{-1}
    \begin{bmatrix}
        -(1-\alpha)J_{n_2 \times n_1} & 0_{n_2 \times m_1}
    \end{bmatrix}
\end{multline*}

\begin{flalign*}
    =\begin{bmatrix}
        (x-\alpha r_1-\alpha n_2)I_{n_1} -A_{\alpha}(G_1)-(1-\alpha)^2\Gamma_{A_{\alpha}(G_2)}(x-\alpha n_1)J_{n_1 \times n_1} & -(1-\alpha)R\\
        -(1-\alpha)R^T & (x-2\alpha)I_{m_1}
    \end{bmatrix}.   &&
\end{flalign*}
By  Lemma \ref{lem1}, we get
\begin{multline*}
    \det S = \det \Big( (x-2\alpha) I_{m_1}\Big) \\
    \cdot \det \Big( (x-\alpha r_1 -\alpha n_2)I_{n_1} - A_{\alpha}(G_1) - (1-\alpha)^2\Gamma_{A_{\alpha}(G_2)}(x-\alpha n_1)J_{n_1 \times n_1} -\frac{(1-\alpha)^2}{(x-2\alpha)}RR^T \Big)
\end{multline*}
\begin{multline}\label{eq71}
    = (x-2\alpha)^{m_1}  \cdot \det \Big( (x-\alpha r_1 - \alpha n_2) I_{n_1} - A_{\alpha}(G_1) - \frac{(1-\alpha)^2}{(x-2\alpha)}RR^T \Big)\\
    \cdot \Big( 1-  (1-\alpha)^2\Gamma_{A_{\alpha}(G_2)}(x-\alpha n_1) \Gamma_{A_{\alpha}(G_1) + \frac{(1-\alpha)^2}{(x-2\alpha)}RR^T}(x-\alpha r_1-\alpha n_2)  \Big),\\
    \text{by Lemma }\ref{lem3}.
\end{multline}
Now using \eqref{eq1} in \eqref{eq4} for the graph $G_1$, we get
\begin{equation*} \label{eq72}
    RR^T=\frac{1}{(1-\alpha)}\big( A_{\alpha}(G_1) -\alpha r_1 I_{n_1}\big) + r_1I_{n_1}.
\end{equation*}
Thus
\begin{equation}\label{eq73}
    A_{\alpha}(G_1) + \frac{(1-\alpha)^2}{(x-2\alpha)}RR^T = \frac{1}{(x-2\alpha)}\Big((x-3\alpha +1)A_{\alpha}(G_1) + (r_1 -3\alpha r_1 + 2{\alpha}^2r_1)I_{n_1}\Big).
\end{equation}
Again, each row sum of the matrix $A_{\alpha}(G_1) + \frac{(1-\alpha)^2}{(x-2\alpha)} RR^T$ is  $r_1 + \frac{(1-\alpha)^2}{(x-2\alpha)}2r_1$.\\
Therefore from \eqref{eq6}, we have 
\begin{equation}\label{eq74}
    \Gamma_{A_{\alpha}(G_1) + \frac{(1-\alpha)^2}{(x-2\alpha)}RR^T}(x-\alpha r_1 -\alpha n_2) = \frac{n_1}{x-\alpha r_1 -\alpha n_2 -\Big(r_1 + \frac{(1-\alpha)^2}{(x-2\alpha)}2r_1\Big)}.
\end{equation}
Applying \eqref{eq73} and \eqref{eq74} in \eqref{eq71} and simplifying, we get
\begin{multline}\label{eq75}
    \det S = (x-2\alpha)^{m_1-n_1}\\
    \cdot \prod_{i=2}^{n_1} \Big(x^2 -\big(2\alpha +\alpha r_1+\alpha n_2 + \lambda_i\big(A_{\alpha}(G_1)\big)\big)x + 2{\alpha}^2n_2 +3\alpha r_1 -r_1 + (3\alpha-1)\lambda_i\big(A_{\alpha}(G_1)\big)\Big)\\
        \cdot \Big( x^2 -(2\alpha +\alpha r_1 +\alpha n_2 +r_1)x + 2{\alpha}^2n_2 +6\alpha r_1 -2r_1 -n_1(1-\alpha)^2(x-2\alpha) \Gamma_{A_{\alpha}(G_2)}(x-\alpha n_1) \Big).
\end{multline}
Finally, using \eqref{eq75} in \eqref{eq67}, we get the required result.
\end{proof}

Now, in the following corollary, we obtain the $A_{\alpha}$-eigenvalues of $G_1 \langle \text{v} \rangle G_2$ taking $G_2$ as an $r_2$-regular graph.

\begin{corollary}\label{cor3.1}
    Let $G_1$ be an $r_1$-regular graph on $n_1$ vertices and $m_1$ edges, and $G_2$ be an $r_2$-regular graph on $n_2$ vertices.
    \begin{enumerate}[label={\upshape \arabic*.}]
        \item If $r_1 =1$, then for each $\alpha \in [0,1]$, the $A_{\alpha}$-spectrum of $G_1 \langle \emph{v} \rangle G_2$ consists precisely of:
        \begin{enumerate}[label= {\upshape (\roman*)}]
            \item $\alpha (3+n_2) -1$;
            \item $2\alpha + \lambda_i\big( A_{\alpha}(G_2) \big)$, $i=2,3, \ldots, n_2$ and
            \item three roots of the equation $F(x)=0$, where
            \begin{equation*} \label{eq76}
                F(x)=(x-2\alpha -r_2)\big( x^2 -(3\alpha+\alpha n_2 +1)x + 2{\alpha}^2 n_2 +6\alpha -2 \big) - 2n_2(1-\alpha)^2(x-2\alpha) .
            \end{equation*}
        \end{enumerate}

        \item If $r_1 \geq 2$, then for each $\alpha \in [0,1]$, the $A_{\alpha}$-spectrum of $G_1 \langle \emph{v} \rangle G_2$ consists precisely of:
        \begin{enumerate}[label= {\upshape (\roman*)}]
            \item $2\alpha$, repeated $m_1 -n_1$ times;
            \item $\alpha n_1 + \lambda_i\big(A_{\alpha}(G_2)\big)$, $i=2,3, \ldots, n_2$;
            \item two roots of the equation $G_i(x) =0$ for each $i= 2,3,\ldots,n_1$, where
            \begin{multline*}
                G_i(x) = x^2 -\Big(2\alpha + \alpha r_1 +\alpha n_2 +\lambda_i\big(A_{\alpha}(G_1)\big)\Big)x\\
                + 2{\alpha}^2 n_2 +3\alpha r_1 -r_1 + (3\alpha -1)\lambda_i\big(A_{\alpha}(G_1)\big)
            \end{multline*}
            and
            \item three roots of the equation $F(x)=0$, where
            \begin{multline*}
                F(x)=(x-\alpha n_1 -r_2)\Big( x^2 -(2\alpha +\alpha r_1 + \alpha n_2 + r_1)x + (2{\alpha}^2n_2 + 6\alpha r_1 - 2r_1)\Big)
                \\- n_1n_2(1-\alpha)^2(x-2\alpha) .
            \end{multline*}
        \end{enumerate}
    \end{enumerate}
\end{corollary}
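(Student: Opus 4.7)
The plan is to specialize Theorem \ref{th3} to the case $G_2$ is $r_2$-regular and then reorganize the resulting expression so that each factor matches either an explicit eigenvalue or one of the polynomials $G_i(x)$, $F(x)$ in the corollary. The starting observation is that when $G_2$ is $r_2$-regular, each row sum of $A_{\alpha}(G_2)$ equals $r_2$, so \eqref{eq6} yields $\Gamma_{A_{\alpha}(G_2)}(x-\alpha n_1)=n_2/(x-\alpha n_1-r_2)$. Moreover $r_2$ is an eigenvalue of $A_\alpha(G_2)$, hence $(x-\alpha n_1-r_2)$ is a factor of $\psi_{A_\alpha(G_2)}(x-\alpha n_1)$; the remaining $n_2-1$ linear factors already contribute the eigenvalues $\alpha n_1+\lambda_i(A_\alpha(G_2))$ listed in item (ii) of both parts.

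For part 2 ($r_1\geq 2$), the hypothesis forces $m_1\geq n_1$, so $(x-2\alpha)^{m_1-n_1}$ is a genuine polynomial contributing the eigenvalue $2\alpha$ with multiplicity $m_1-n_1$, i.e.\ item (i). The product $\prod_{i=2}^{n_1}$ in Theorem \ref{th3} is already the family $G_i(x)$ from item (iii). The $i=1$ term of the same product is handled separately: substituting $\lambda_1(A_\alpha(G_1))=r_1$ into the quadratic gives $x^2-(2\alpha+\alpha r_1+\alpha n_2+r_1)x+2\alpha^2 n_2+6\alpha r_1-2r_1$, which coincides with the non-coronal part of the final factor of Theorem \ref{th3}. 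Inserting the closed-form coronal, combining with the $(x-\alpha n_1-r_2)$ factor drawn from $\psi_{A_\alpha(G_2)}(x-\alpha n_1)$, and multiplying through to clear the pole produces precisely the cubic $F(x)$ of item (iv).

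Part 1 ($r_1=1$) requires more care, because then $G_1=P_2$ forces $n_1=2$, $m_1=1$, and the exponent $m_1-n_1=-1$ makes $(x-2\alpha)^{m_1-n_1}$ a rational factor that must be cancelled. The cancellation is supplied by the single quadratic in the product, obtained by substituting $\lambda_2(A_\alpha(G_1))=2\alpha-1$ into the generic $G_i(x)$: a direct verification shows $x=2\alpha$ is a root, and by Vieta's formulas the other root is $\alpha(3+n_2)-1$, giving the eigenvalue in item (i) after the $(x-2\alpha)$ factor cancels the $(x-2\alpha)^{-1}$. The cubic $F(x)$ of item (iii) is then obtained exactly as in part 2 by specializing $r_1=1$, $n_1=2$ in the final factor, substituting the coronal, and absorbing the pole against the matching factor of $\psi_{A_\alpha(G_2)}(x-\alpha n_1)$.

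The main obstacle is purely bookkeeping: one must verify that the rational factor $(x-2\alpha)^{m_1-n_1}$ in part 1 is genuinely cancelled by the $i=2$ quadratic (ensuring the final expression is a polynomial of the correct degree $n_1+m_1+n_2$), and that the $(x-\alpha n_1-r_2)$ factor from $\psi_{A_\alpha(G_2)}$ clears the coronal pole in both parts so that $F(x)$ is a polynomial. Once these cancellations are checked, reading off the roots of each surviving factor yields the eigenvalue lists stated in the corollary, in complete analogy with the proof of Corollary \ref{cor1.1}.
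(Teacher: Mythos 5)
Your proposal is correct and follows essentially the same route as the paper's (implicit) proof, which simply specializes Theorem \ref{th3} using $\Gamma_{A_{\alpha}(G_2)}(x)=n_2/(x-r_2)$ and the fact that $r_2$ is an $A_{\alpha}$-eigenvalue of $G_2$, exactly as in the proof of Corollary \ref{cor1.1}. Your explicit verification that for $r_1=1$ the lone quadratic $G_2(x)$ has $x=2\alpha$ as a root (cancelling the factor $(x-2\alpha)^{-1}$) and that its other root is $\alpha(3+n_2)-1$ is a detail the paper glosses over, but it is the right check and it works out.
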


Taking $G_2$ as $K_{p,q}$, we obtain the $A_{\alpha}$-eigenvalues of $G_1 \langle \text{v} \rangle G_2$ in the next corollary.

\begin{corollary}\label{cor3.2}
    Let $G_1$ be an $r_1$-regular graph on $n_1$ vertices and $m_1$ edges. Let $p,q \geq 1$ be integers and $G_2 = K_{p,q}$.
   \begin{enumerate}[label={\upshape \arabic*.}]
        \item If $r_1 =1$, then for each $\alpha \in [0,1]$, the $A_{\alpha}$-spectrum of $G_1 \langle \emph{v} \rangle G_2$ consists precisely of:
        \begin{enumerate}[label= {\upshape (\roman*)}]
            \item $\alpha (3+p+q) -1$;
            \item $\alpha (2+p)$, repeated $q-1$ times;
            \item $\alpha (2+q)$, repeated $p-1$ times and
            \item four roots of the equation $F(x)=0$, where
            \begin{multline*}
            F(x) = \big( x^2 -(3\alpha + \alpha p + \alpha q +1)x + 2{\alpha}^2p + 2{\alpha}^2q +6\alpha -2 \big) \\
                \cdot \big( x^2 - (4\alpha +\alpha p +\alpha q)x + 4{\alpha}^2 + 2{\alpha}^2p + 2{\alpha}^2q +2\alpha pq -pq \big)\\
                -2(1-\alpha)^2(x-2\alpha) \big( (x-2\alpha)(p+q) -\alpha(p+q)^2 + 2pq \big).
            \end{multline*}
        \end{enumerate}

        \item If $r_1 \geq 2$, then for each $\alpha \in [0,1]$, the $A_{\alpha}$-spectrum of $G_1 \langle \emph{v} \rangle G_2$ consists precisely of:
        \begin{enumerate}[label= {\upshape (\roman*)}]
            \item $2\alpha$, repeated $m_1 -n_1$ times;
            \item $\alpha (n_1 +p)$, repeated $q-1$ times;
            \item $\alpha (n_1 +q)$, repeated $p-1$ times;
            \item two roots of the equation $G_i(x) = 0$ for each $i=2,3,\ldots,n_1$, where
            \begin{multline*}
                G_i(x) = x^2 - \Big(2\alpha +\alpha r_1 + \alpha p + \alpha q +\lambda_i\big(A_{\alpha}(G_1)\big)\Big)x \\
                +2{\alpha}^2p +2{\alpha}^2q +3\alpha r_1 - r_1 + (3\alpha -1)\lambda_i\big(A_{\alpha}(G_1)\big)
            \end{multline*}
            and
            \item four roots of the equation $F(x)=0$, where
            \begin{multline*}
                F(x) = \Big( x^2 -(2\alpha + \alpha r_1 + \alpha p+ \alpha q +r_1)x + 2{\alpha}^2p + 2{\alpha}^2q +6\alpha r_1 -2r_1 \Big) \\
                \cdot \Big( x^2 -(2\alpha n_1 + \alpha p +\alpha q)x + {\alpha}^2n_1^2 + {\alpha}^2n_1p + {\alpha}^2n_1q +2\alpha pq -pq\Big)\\
                -n_1(1-\alpha)^2(x-2\alpha)\Big((x-\alpha n_1)(p+q) -\alpha( p+q )^2 +2pq \Big).
            \end{multline*}
        \end{enumerate}
    \end{enumerate}
\end{corollary}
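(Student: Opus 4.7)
The plan is to follow exactly the same strategy used in the proof of Corollary \ref{cor1.2}: plug $G_2 = K_{p,q}$ into the formula from Theorem \ref{th3}, substitute the known $A_{\alpha}$-spectrum of $K_{p,q}$ (Lemma \ref{lem4}) and the $A_{\alpha}$-coronal of $K_{p,q}$ (Lemma \ref{lem6}), and then simplify by noticing that the two extreme roots of $\psi_{A_{\alpha}(K_{p,q})}(x-\alpha n_1)$ coincide with the zeros of the denominator of $\Gamma_{A_{\alpha}(K_{p,q})}(x-\alpha n_1)$, so they cancel after we clear denominators in the last factor of Theorem \ref{th3}.

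First I would write
\[
\psi_{A_{\alpha}(K_{p,q})}(x-\alpha n_1) = \big(x-\alpha n_1 -\alpha p\big)^{q-1}\big(x-\alpha n_1 -\alpha q\big)^{p-1}\cdot D(x),
\]
where $D(x) = (x-\alpha n_1)^2 - \alpha(p+q)(x-\alpha n_1) + (2\alpha-1)pq$ is precisely the denominator appearing in $\Gamma_{A_{\alpha}(K_{p,q})}(x-\alpha n_1)$. Substituting into Theorem \ref{th3} and multiplying the bracketed factor there by $D(x)$, the quartic
\begin{multline*}
F(x) = \Big(x^2 -(2\alpha+\alpha r_1+\alpha p+\alpha q+r_1)x + 2\alpha^2 p+2\alpha^2 q+6\alpha r_1-2r_1\Big)\\
\cdot \Big(x^2-(2\alpha n_1+\alpha p+\alpha q)x + \alpha^2 n_1^2 + \alpha^2 n_1 p + \alpha^2 n_1 q + 2\alpha pq - pq\Big)\\
- n_1(1-\alpha)^2(x-2\alpha)\Big((x-\alpha n_1)(p+q) - \alpha(p+q)^2 + 2pq\Big)
\end{multline*}
emerges as the product of the ``last factor'' in Theorem \ref{th3} with $D(x)$, producing the four roots of $F(x)=0$ listed in part (ii)(v) (respectively (i)(iv)). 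The remaining quadratic factors $G_i(x)$ come straight from the $\prod_{i=2}^{n_1}$ term of Theorem \ref{th3} with $n_2 = p+q$.

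Finally I would split into the two cases. For $r_1 = 1$, only $G_1 = P_2$ is possible, so $n_1 = 2$ and $m_1 = 1$; substituting these values eliminates the $(x-2\alpha)^{m_1-n_1}$ factor and collapses the $\prod_{i=2}^{n_1}$ into a single simple eigenvalue $\alpha(3+p+q)-1$ arising from $\lambda_2(A_{\alpha}(P_2)) = 2\alpha-1$, which after simplification gives part (i). For $r_1 \geq 2$ we have $m_1 \geq n_1$, and the conclusion (ii) is read off directly from the substituted formula. The only non-mechanical step is the cancellation of $D(x)$ against the two extreme roots of $\psi_{A_{\alpha}(K_{p,q})}$, which is identical to what was done in Corollary \ref{cor1.2}; once that is in place, the rest is bookkeeping.
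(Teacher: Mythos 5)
Your plan is correct and matches what the paper does: the paper omits an explicit proof of Corollary \ref{cor3.2} but (as with Corollary \ref{cor2.2}) intends exactly the argument of Corollary \ref{cor1.2}, namely substituting Lemma \ref{lem4} and Lemma \ref{lem6} into Theorem \ref{th3}, cancelling the two extreme eigenvalue factors of $\psi_{A_{\alpha}(K_{p,q})}(x-\alpha n_1)$ against the denominator $D(x)$ of the coronal to produce the quartic $F(x)$, and then specialising to $r_1=1$ (where $G_1=P_2$, $n_1=2$, $m_1=1$, and the factor $(x-2\alpha)^{-1}$ cancels against the root $2\alpha$ of the single quadratic in the product, leaving $\alpha(3+p+q)-1$) and to $r_1\geq 2$. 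All the identified factors and cancellations check out against the stated spectrum.
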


Finally, to conclude this section, we provide a construction of new pair of $A_{\alpha}$-cospectral graphs from a given pair of $A_{\alpha}$-cospectral graphs in the following corollary.

\begin{corollary}\label{cor3.3}
    \begin{enumerate}[label={\upshape \arabic*.}]
        \item Let $G_1$ and $G_2$ be two $A_{\alpha}$-cospectral regular graphs for $\alpha \in [0,1]$, and let $H$ be an arbitrary graph. Then the graphs $G_1\langle \emph{v} \rangle H$ and  $G_2 \langle \emph{v} \rangle H$ are $A_{\alpha}$-cospectral.
        \item Let $H_1$ and $H_2$ be two $A_{\alpha}$-cospectral graphs with $\Gamma_{A_{\alpha}(H_1)}(x) = \Gamma_{A_{\alpha}(H_2)}(x)$ for $\alpha \in [0,1]$. If $G$ is a regular graph, then the graphs $G\langle \emph{v} \rangle H_1$ and $G \langle \emph{v} \rangle H_2$ are $A_{\alpha}$-cospectral.
    \end{enumerate}
\end{corollary}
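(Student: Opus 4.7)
The plan is to mimic the proofs of Corollaries \ref{cor1.3} and \ref{cor2.3}, invoking Theorem \ref{th3} as the main tool. The key observation is that the expression for $\psi_{A_{\alpha}(G_1 \langle \textrm{v} \rangle G_2)}(x)$ given in Theorem \ref{th3} depends on $G_1$ only through the scalars $r_1$, $n_1$, $m_1$ and the $A_{\alpha}$-spectrum $\{\lambda_i(A_{\alpha}(G_1))\}_{i=1}^{n_1}$, and it depends on $G_2$ only through $n_2$, $\psi_{A_{\alpha}(G_2)}(x-\alpha n_1)$ and $\Gamma_{A_{\alpha}(G_2)}(x-\alpha n_1)$. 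Once we verify that both pairs under consideration share all of the relevant invariants, the corresponding $A_{\alpha}$-characteristic polynomials must coincide.

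For part (1), I would first note that any two $A_{\alpha}$-cospectral regular graphs automatically agree on the number of vertices (the degree of the characteristic polynomial) and on the common regularity $r$: for an $r$-regular graph $A_{\alpha}(G) = \alpha r I + (1-\alpha)A(G)$, so the largest $A_{\alpha}$-eigenvalue equals $\alpha r + (1-\alpha)r = r$, which therefore is pinned down by the spectrum. Equal $n$ and $r$ then force $m = nr/2$ to match as well. Since by hypothesis $G_1$ and $G_2$ additionally share the full $A_{\alpha}$-spectrum, every ingredient feeding Theorem \ref{th3} agrees when we substitute the pair $(G_1, H)$ versus $(G_2, H)$, yielding the desired equality $\psi_{A_{\alpha}(G_1 \langle \textrm{v} \rangle H)}(x) = \psi_{A_{\alpha}(G_2 \langle \textrm{v} \rangle H)}(x)$.

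For part (2), the $A_{\alpha}$-cospectrality of $H_1$ and $H_2$ gives $n_{H_1} = n_{H_2} =: n_2$ and $\psi_{A_{\alpha}(H_1)}(x) = \psi_{A_{\alpha}(H_2)}(x)$, while the extra hypothesis $\Gamma_{A_{\alpha}(H_1)}(x) = \Gamma_{A_{\alpha}(H_2)}(x)$ supplies the remaining $H$-dependent piece appearing in Theorem \ref{th3}. With the regular graph $G$ held fixed, substituting $H_1$ and $H_2$ into the theorem produces identical characteristic polynomials, so $G \langle \textrm{v} \rangle H_1$ and $G \langle \textrm{v} \rangle H_2$ are $A_{\alpha}$-cospectral.

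The main obstacle is purely a bookkeeping one: confirming that regularity together with $A_{\alpha}$-cospectrality genuinely forces equal $r$ and $m$, and then reading Theorem \ref{th3} carefully to confirm that every symbol depending on $G_1$, respectively $G_2$, is one of the invariants listed above. No fresh computation beyond quoting Theorem \ref{th3} is required.
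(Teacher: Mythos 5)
Your proposal is correct and matches the paper's intended argument: the paper omits an explicit proof of this corollary precisely because it is the same one-line argument as for Corollary \ref{cor1.3} (cospectral regular graphs share $n$, $r$, and hence $m$; then substitute into Theorem \ref{th3} and compare characteristic polynomials). Your added justification that the common regularity is recoverable as the largest $A_{\alpha}$-eigenvalue is a correct filling-in of a detail the paper leaves implicit.
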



\section{$A_{\alpha}$-spectrum of $G_1 \langle \textrm{e} \rangle G_2$} \label{sec7}
In this section, we study the $A_{\alpha}$-spectrum of $G_1 \langle \textrm{e} \rangle G_2$, the $R$-edge join of the graphs $G_1$ and $G_2$. We start with obtaining the expression of the $A_{\alpha}$-characteristic polynomial of $G_1 \langle \textrm{e} \rangle G_2$, for an $r_1$-regular graph $G_1$ and an arbitrary graph $G_2$, in the following theorem.

\begin{theorem}\label{th4}
    Let $G_1$ be an $r_1$-regular graph on $n_1$ vertices and $m_1$ edges, and $G_2$ be an arbitrary graph on $n_2$ vertices. Let $\Gamma_{A_{\alpha}(G_2)}(x)$ be the $A_{\alpha}(G_2)$-coronal of $G_2$. Then for each $\alpha \in [0,1]$, the $A_{\alpha}$-characteristic polynomial of $G_1 \langle \emph{e} \rangle G_2$ is given by,
    \begin{multline*}
        \psi_{A_{\alpha}(G_1 \langle \emph{e} \rangle G_2)}(x) =\big(x-2\alpha -\alpha n_2\big)^{m_1 -n_1} \cdot \psi_{A_{\alpha}(G_2)}(x- \alpha m_1)\\
        \cdot \prod_{i=2}^{n_1}\Big(x^2 -\big(2\alpha +\alpha r_1+\alpha n_2 + \lambda_i\big(A_{\alpha}(G_1)\big)\big)x + {\alpha}^2 r_1 n_2 +3\alpha r_1 -r_1 - (1- 3\alpha-\alpha n_2)\lambda_i\big(A_{\alpha}(G_1)\big)\Big)\\
        \cdot \Big( x^2 -(2\alpha +\alpha r_1 +\alpha n_2 +r_1)x + {\alpha}^2 r_1 n_2 +6\alpha r_1 -2r_1 + \alpha r_1 n_2 \\-m_1(1-\alpha)^2(x-\alpha r_1 -r_1) \Gamma_{A_{\alpha}(G_2)}(x-\alpha m_1) \Big).
    \end{multline*}
\end{theorem}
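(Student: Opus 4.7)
The proof follows the same pattern as Theorems \ref{th1}, \ref{th2}, and \ref{th3}. With respect to the partition $V(G_1)\cup I(G_1)\cup V(G_2)$ of Remark \ref{labeling}, the graph $G_1\langle \textrm{e}\rangle G_2$ has adjacency blocks $A(G_1),R,0$ in the row indexed by $V(G_1)$ (the $R$-graph keeps the $G_1$-edges, unlike the subdivision case), $R^T,0,J_{m_1\times n_2}$ in the row indexed by $I(G_1)$, and $0,J_{n_2\times m_1},A(G_2)$ in the row indexed by $V(G_2)$. The degrees are $2r_1$ on $V(G_1)$, $2+n_2$ on $I(G_1)$, and $d_{G_2}(\cdot)+m_1$ on $V(G_2)$, so that
\[
A_{\alpha}(G_1\langle \textrm{e}\rangle G_2)=
\begin{bmatrix}
A_{\alpha}(G_1)+\alpha r_1 I_{n_1} & (1-\alpha)R & 0\\
(1-\alpha)R^T & (2\alpha+\alpha n_2)I_{m_1} & (1-\alpha)J_{m_1\times n_2}\\
0 & (1-\alpha)J_{n_2\times m_1} & A_{\alpha}(G_2)+\alpha m_1 I_{n_2}
\end{bmatrix}.
\]

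I would then apply Lemma \ref{lem1} to $xI-A_{\alpha}(G_1\langle \textrm{e}\rangle G_2)$ over its $(3,3)$-block $(x-\alpha m_1)I_{n_2}-A_{\alpha}(G_2)$. The coronal identity \eqref{eq5} converts the bilinear correction into $(1-\alpha)^2\Gamma_{A_{\alpha}(G_2)}(x-\alpha m_1)J_{m_1\times m_1}$, producing a factor $\psi_{A_{\alpha}(G_2)}(x-\alpha m_1)$ and reducing the problem to computing $\det S$, where $S$ is a $2\times 2$ block matrix whose $(2,2)$-entry takes the form $cI_{m_1}-dJ_{m_1\times m_1}$ with $c=x-2\alpha-\alpha n_2$ and $d=(1-\alpha)^2\Gamma_{A_{\alpha}(G_2)}(x-\alpha m_1)$. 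A second Schur complement on $S$ over this $(2,2)$-block, inverted explicitly by Lemma \ref{lem5}, together with $RR^T=A(G_1)+r_1 I_{n_1}$ from \eqref{eq4} and $R\boldsymbol{1_{m_1}}=r_1\boldsymbol{1_{n_1}}$ (so that $RJ_{m_1\times m_1}R^T=r_1^{2}J_{n_1\times n_1}$), reduces everything to computing $\det T$, where
\[
T=(x-\alpha r_1)I_{n_1}-A_{\alpha}(G_1)-\tfrac{(1-\alpha)^2}{c}\big(A(G_1)+r_1 I_{n_1}\big)-\tfrac{(1-\alpha)^2 d\,r_1^{2}}{c(c-m_1 d)}J_{n_1\times n_1}.
\]

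To finish, I would use $A(G_1)=\tfrac{1}{1-\alpha}(A_{\alpha}(G_1)-\alpha r_1 I_{n_1})$ to rewrite the part of $T$ without the $J_{n_1\times n_1}$ term as a scalar polynomial in $A_{\alpha}(G_1)$, and then diagonalize in the orthonormal eigenbasis of $A_{\alpha}(G_1)$. Since $G_1$ is $r_1$-regular, $\boldsymbol{1_{n_1}}/\sqrt{n_1}$ is the eigenvector for $\lambda_1(A_{\alpha}(G_1))=r_1$, and $J_{n_1\times n_1}$ annihilates its orthogonal complement. On each orthogonal direction $i=2,\dots,n_1$, the corresponding eigenvalue of $T$ clears, after multiplication by $c$, to exactly the quadratic $Q_i(x)$ in the stated product. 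The $\boldsymbol{1_{n_1}}$-direction is treated by Lemma \ref{lem3} (equivalently, the matrix-determinant lemma), which folds in the $J_{n_1\times n_1}$-correction; combining with $\det(cI_{m_1}-dJ_{m_1\times m_1})=c^{m_1-1}(c-m_1 d)$ and the $\psi_{A_{\alpha}(G_2)}(x-\alpha m_1)$ factor then gives the claimed expression.

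The main obstacle is the bookkeeping at the end. The $\boldsymbol{1_{n_1}}$-direction contribution is of the form $Q_1(x)(c-m_1 d)-(1-\alpha)^{2}d\,r_1^{2}n_1$, and to collapse this into $(c-m_1 d)\cdot F(x)$ (so that the singular $(c-m_1d)^{-1}$ from the Schur complement cancels) one needs the non-obvious identity
\[
m_1\,Q_1(x)+(1-\alpha)^{2}r_1^{2}n_1 \;=\; m_1\,c\,(x-\alpha r_1-r_1),
\]
which relies on the handshake relation $n_1 r_1=2m_1$ together with the factorization $Q_1(x)+2r_1(1-\alpha)^{2}=c(x-\alpha r_1-r_1)$. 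This same identity extracts one extra factor of $c$, so that the final cumulative power $c^{m_1-1}\cdot c\cdot c^{-(n_1-1)}\cdot c^{-1}$ simplifies to $c^{m_1-n_1}=(x-2\alpha-\alpha n_2)^{m_1-n_1}$, as claimed.
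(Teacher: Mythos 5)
Your proposal is correct and follows essentially the same route as the paper: a first Schur complement over the $G_2$-block, a second over the $(2,2)$-block $cI_{m_1}-dJ_{m_1\times m_1}$ inverted via Lemma \ref{lem5}, the identity $RJ_{m_1\times m_1}R^T=r_1^2J_{n_1\times n_1}$, separation of the rank-one $J_{n_1\times n_1}$ correction (the paper uses Lemma \ref{lem3} with the coronal of $A_{\alpha}(G_1)+\frac{(1-\alpha)^2}{c}RR^T$, which is your eigenbasis argument in coronal language), and the handshake relation $n_1r_1=2m_1$ to extract the extra factor of $c$ at the end. The key simplification you flag, $Q_1(x)+2r_1(1-\alpha)^2=c\,(x-\alpha r_1-r_1)$, is exactly the identity the paper uses to reduce its $F_1(x)$ to $c$ times the stated final factor.
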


\begin{proof}
    The adjacency matrix of $G_1 \langle \textrm{e} \rangle G_2$ is
\begin{equation}\label{eq83}
    A(G_1 \langle \textrm{e} \rangle G_2) =
    \begin{bmatrix}
        A(G_1) & R & 0_{n_1 \times n_2}\\
        R^T & 0_{m_1 \times m_1} & J_{m_1 \times n_2}\\
        0_{n_2 \times n_1} & J_{n_2 \times m_1} & A(G_2)
    \end{bmatrix},
\end{equation}
where $R$ is the $0-1$ incidence matrix of $G_1$.\\
The degrees of the vertices of the graph $G_1 \langle \textrm{e} \rangle G_2$ are:
\begin{equation*}\label{eq84}
    \begin{split}
        d_{G_1 \langle \textrm{e} \rangle G_2}(v_i) & = 2r_1, \enskip \textrm{for} \enskip i=1,2, \ldots , n_1;\\
        d_{G_1 \langle \textrm{e} \rangle G_2}(v^\prime_j) & = 2 + n_2, \enskip \textrm{for} \enskip j=1,2, \ldots , m_1;\\
        d_{G_1 \langle \textrm{e} \rangle G_2}(u_k) & = d_{G_2}(u_k) + m_1, \enskip \textrm{for} \enskip k=1,2, \dots , n_2.
    \end{split}
\end{equation*}
Therefore, we have
\begin{equation}\label{eq85}
    D(G_1 \langle \textrm{e} \rangle G_2) =
    \begin{bmatrix}
        2r_1 I_{n_1} & 0_{n_1 \times m_1} & 0_{n_1 \times n_2}\\
        0_{m_1 \times n_1} & (2+n_2)I_{m_1} & 0_{m_1 \times n_2}\\
        0_{n_2 \times n_1} & 0_{n_2 \times m_1} & D(G_2) + m_1 I_{n_2}
    \end{bmatrix}.
\end{equation}
Using \eqref{eq83} and \eqref{eq85}, we get the $A_{\alpha}$-matrix of $G_1 \langle \textrm{e} \rangle G_2$ as
\begin{equation*}\label{eq86}
    A_{\alpha}(G_1 \langle \textrm{e} \rangle G_2)=
    \begin{bmatrix}
        A_{\alpha}(G_1) +\alpha r_1 I_{n_1} & (1-\alpha)R & 0_{n_1 \times n_2}\\
        (1-\alpha)R^T & \alpha(2+n_2) I_{m_1} & (1-\alpha)J_{m_1 \times n_2}\\
        0_{n_2 \times n_1} & (1-\alpha)J_{n_2 \times m_1} & A_{\alpha}(G_2) + \alpha m_1 I_{n_2}
    \end{bmatrix}.
\end{equation*}
Therefore, the characteristic polynomial of $A_{\alpha}(G_1 \langle \textrm{e} \rangle G_2)$ is
\begin{align}\label{eq87}
    \nonumber &\psi_{A_{\alpha}(G_1 \langle \textrm{e} \rangle G_2)}(x) = \det \Big(xI_{n_1+m_1+n_2}-A_{\alpha}(G_1 \langle \textrm{e} \rangle G_2) \Big)\\
    \nonumber &= \det
    \begin{bmatrix}
        (x-\alpha r_1)I_{n_1}-A_{\alpha}(G_1) & -(1-\alpha)R & 0_{n_1 \times n_2}\\
        -(1-\alpha)R^T & (x-2\alpha -\alpha n_2) I_{m_1} & -(1-\alpha)J_{m_1 \times n_2}\\
        0_{n_2 \times n_1} & -(1-\alpha)J_{n_2 \times m_1} & (x-\alpha m_1) I_{n_2} - A_{\alpha}(G_2)
    \end{bmatrix}\\
    &=  \det \Big( (x-\alpha m_1) I_{n_2} - A_{\alpha}(G_2) \Big) \cdot \det S \quad (\text{by Lemma } \ref{lem1}),
\end{align}
where
\begin{multline*}
    S=
    \begin{bmatrix}
        (x-\alpha r_1) I_{n_1} - A_{\alpha}(G_1) & -(1-\alpha)R\\
        -(1-\alpha)R^T & (x-2\alpha -\alpha n_2) I_{m_1}
    \end{bmatrix}\\
    -
    \begin{bmatrix}
        0_{n_1 \times n_2}\\
        -(1-\alpha)J_{m_1 \times n_2}
    \end{bmatrix}
    \Big( (x-\alpha m_1)I_{n_2} - A_{\alpha}(G_2) \Big)^{-1}
    \begin{bmatrix}
        0_{n_2 \times n_1} & -(1-\alpha)J_{n_2 \times m_1}
    \end{bmatrix}
\end{multline*}

\begin{flalign*}
    =\begin{bmatrix}
        (x-\alpha r_1)I_{n_1} -A_{\alpha}(G_1) & -(1-\alpha)R\\
        -(1-\alpha)R^T & (x-2\alpha -\alpha n_2)I_{m_1} -(1-\alpha)^2\Gamma_{A_{\alpha}(G_2)}(x-\alpha m_1)J_{m_1 \times m_1}
    \end{bmatrix}.   &&
\end{flalign*}
By Lemma \ref{lem1}, we get
\begin{multline}\label{eq90}
    \det S = \det \Big( (x-2\alpha -\alpha n_2) I_{m_1} - (1-\alpha)^2\Gamma_{A_{\alpha}(G_2)}(x-\alpha m_1)J_{m_1 \times m_1} \Big) \\
    \cdot \det \bigg((x-\alpha r_1)I_{n_1} - A_{\alpha}(G_1)\\
    -(1-\alpha)^2R\Big((x-2\alpha -\alpha n_2)I_{m_1} -(1-\alpha)^2 \Gamma_{A_{\alpha}(G_2)}(x-\alpha m_1)J_{m_1 \times m_1} \Big)^{-1}R^T \bigg).
\end{multline}
By Lemma \ref{lem5}, we can write \eqref{eq90} as
\begin{multline*}
    \det S = \big(x-2\alpha -\alpha n_2 \big)^{m_1 -1} \Big(x-2\alpha - \alpha n_2 -m_1 (1-\alpha)^2\Gamma_{A_{\alpha}(G_2)}(x-\alpha m_1)\Big)\\
    \cdot \det \bigg( (x-\alpha r_1)I_{n_1} - A_{\alpha}(G_1) -(1-\alpha)^2 R\Big( \frac{1}{(x-2\alpha -\alpha n_2)}I_{m_1}\\
    +\frac{(1-\alpha)^2\Gamma_{A_{\alpha}(G_2)}(x-\alpha m_1)}{(x-2\alpha -\alpha n_2)(x-2\alpha -\alpha n_2 -m_1(1-\alpha)^2 \Gamma_{A_{\alpha}(G_2)}(x-\alpha m_1))}J_{m_1 \times m_1} \Big)R^T \bigg)
\end{multline*}
\begin{multline*}
    = \big(x-2\alpha -\alpha n_2 \big)^{m_1 -1} \Big(x-2\alpha - \alpha n_2 -m_1 (1-\alpha)^2\Gamma_{A_{\alpha}(G_2)}(x-\alpha m_1)\Big)\\
    \cdot \det \bigg( (x-\alpha r_1)I_{n_1} - A_{\alpha}(G_1)   -\frac{(1-\alpha)^2}{(x-2\alpha -\alpha n_2)}RR^T\\
    -\frac{(1-\alpha)^4\Gamma_{A_{\alpha}(G_2)}(x-\alpha m_1)}{(x-2\alpha -\alpha n_2)\big(x-2\alpha -\alpha n_2 -m_1(1-\alpha)^2 \Gamma_{A_{\alpha}(G_2)}(x-\alpha m_1)\big)} RJ_{m_1 \times m_1}R^T \bigg)
\end{multline*}
\begin{multline*}
    = \big(x-2\alpha -\alpha n_2 \big)^{m_1 -1} \Big(x-2\alpha - \alpha n_2 -m_1 (1-\alpha)^2\Gamma_{A_{\alpha}(G_2)}(x-\alpha m_1)\Big)\\
    \cdot \det \bigg( (x-\alpha r_1)I_{n_1} - A_{\alpha}(G_1) - \frac{(1-\alpha)^2}{(x-2\alpha -\alpha n_2)}RR^T\\
    -\frac{{r_1}^2(1-\alpha)^4\Gamma_{A_{\alpha}(G_2)}(x-\alpha m_1)}{(x-2\alpha -\alpha n_2)\big(x-2\alpha -\alpha n_2 -m_1(1-\alpha)^2 \Gamma_{A_{\alpha}(G_2)}(x-\alpha m_1)\big)} J_{n_1 \times n_1} \bigg)
\end{multline*}
\begin{multline}\label{eq94}
    = \big(x-2\alpha -\alpha n_2 \big)^{m_1 -1} \Big(x-2\alpha - \alpha n_2 -m_1 (1-\alpha)^2\Gamma_{A_{\alpha}(G_2)}(x-\alpha m_1)\Big)\\
    \cdot \det \bigg( (x-\alpha r_1)I_{n_1} - A_{\alpha}(G_1) - \frac{(1-\alpha)^2}{(x-2\alpha -\alpha n_2)}RR^T\bigg)\\
    \cdot \bigg(1 - \frac{{r_1}^2(1-\alpha)^4\Gamma_{A_{\alpha}(G_2)}(x-\alpha m_1)}{(x-2\alpha -\alpha n_2)\big(x-2\alpha -\alpha n_2 -m_1(1-\alpha)^2 \Gamma_{A_{\alpha}(G_2)}(x-\alpha m_1)\big)}\\
    \cdot \Gamma_{A_{\alpha}(G_1) + \frac{(1-\alpha)^2}{(x-2\alpha -\alpha n_2)}RR^T}(x-\alpha r_1) \bigg),
\end{multline}
by Lemma \ref{lem3}.

Again, each row sum of the matrix $A_{\alpha}(G_1) + \frac{(1-\alpha)^2}{(x-2\alpha -\alpha n_2)} RR^T$ is  $r_1 + \frac{(1-\alpha)^2}{(x-2\alpha -\alpha n_2)}2r_1$. Therefore from \eqref{eq6}, we have
\begin{equation*}\label{eq95}
    \Gamma_{A_{\alpha}(G_1) + \frac{(1-\alpha)^2}{(x-2\alpha -\alpha n_2)}RR^T}(x-\alpha r_1) = \frac{n_1}{x-\alpha r_1 -\Big(r_1 + \frac{(1-\alpha)^2}{(x-2\alpha -\alpha n_2)}2r_1\Big)}
\end{equation*}
\begin{equation*}\label{eq96}
    = \frac{n_1(x-2\alpha -\alpha n_2)}{(x-\alpha r_1)(x-2\alpha -\alpha n_2) -r_1(x-2\alpha -\alpha n_2) -2r_1(1-\alpha)^2}
\end{equation*}
and using this, the expression
\begin{multline*}
    \bigg(1 - \frac{{r_1}^2(1-\alpha)^4\Gamma_{A_{\alpha}(G_2)}(x-\alpha m_1)}{(x-2\alpha -\alpha n_2)\big(x-2\alpha -\alpha n_2 -m_1(1-\alpha)^2 \Gamma_{A_{\alpha}(G_2)}(x-\alpha m_1)\big)}\\
    \cdot \Gamma_{A_{\alpha}(G_1) + \frac{(1-\alpha)^2}{(x-2\alpha -\alpha n_2)}RR^T}(x-\alpha r_1) \bigg)
\end{multline*}
becomes
\begin{equation}\label{eq98}
    \frac{f(x)-{r_1}^2n_1(1-\alpha)^4 (x-2\alpha -\alpha n_2)\Gamma_{A_{\alpha}(G_2)}(x-\alpha m_1)}{f(x)},
\end{equation}
where
\begin{multline*}
    f(x)=(x-2\alpha -\alpha n_2)\Big(x-2\alpha -\alpha n_2 -m_1(1-\alpha)^2\Gamma_{A_{\alpha}(G_2)}(x-\alpha m_1)\Big)\\
    \cdot \Big((x-\alpha r_1)(x-2\alpha-\alpha n_2) -r_1(x-6\alpha +2{\alpha}^2-\alpha n_2 +2)\Big).
\end{multline*}
Again,
\begin{flalign*}
    (x-\alpha r_1)I_{n_1} - A_{\alpha}(G_1) - \frac{(1-\alpha)^2}{(x-2\alpha -\alpha n_2)}RR^T &&
\end{flalign*}
\begin{flalign*}
    =(x-\alpha r_1)I_{n_1} - A_{\alpha}(G_1) - \frac{(1-\alpha)^2}{(x-2\alpha -\alpha n_2)}\big( A(G_1) + D(G_1)\big) &&
\end{flalign*}
\begin{flalign*}
    =(x-\alpha r_1)I_{n_1} - A_{\alpha}(G_1) - \frac{(1-\alpha)^2}{(x-2\alpha -\alpha n_2)}\bigg( \frac{A_{\alpha}(G_1) - \alpha D(G_1)}{(1-\alpha)} + D(G_1)\bigg) &&
\end{flalign*}
\begin{flalign*}
    =\bigg(x-\alpha r_1 - \frac{r_1(1-3\alpha +2{\alpha}^2)}{(x-2\alpha -\alpha n_2)}\bigg)I_{n_1} - \frac{(x+ 1-3\alpha -\alpha n_2)}{(x-2\alpha -\alpha n_2)} A_{\alpha}(G_1), \text{ as } D(G_1) = r_1 I_{n_1}.&&
\end{flalign*}
Therefore,
\begin{multline}\label{eq103}
    \det \bigg( (x-\alpha r_1)I_{n_1} - A_{\alpha}(G_1) - \frac{(1-\alpha)^2}{(x-2\alpha -\alpha n_2)}RR^T\bigg)\\
    = \prod_{i=1}^{n_1}\bigg(x-\alpha r_1 - \frac{r_1(1-3\alpha +2{\alpha}^2)}{(x-2\alpha -\alpha n_2)} -\frac{(x+ 1-3\alpha -\alpha n_2)}{(x-2\alpha -\alpha n_2)} \lambda_i\big(A_{\alpha}(G_1)\big)\bigg)\\
    = \frac{1}{(x-2\alpha -\alpha n_2)^{n_1}} \prod_{i=1}^{n_1}\bigg((x-\alpha r_1)(x-2\alpha -\alpha n_2)- r_1(1-3\alpha +2{\alpha}^2) \\
    -(x+ 1-3\alpha -\alpha n_2) \lambda_i\big(A_{\alpha}(G_1)\big)\bigg)\\
    = \frac{1}{(x-2\alpha -\alpha n_2)^{n_1}} \bigg((x-\alpha r_1)(x-2\alpha -\alpha n_2)- r_1(x-6\alpha +2{\alpha}^2 -\alpha n_2 +2)\bigg)\\
    \cdot \prod_{i=2}^{n_1}\bigg((x-\alpha r_1)(x-2\alpha -\alpha n_2)- r_1(1-3\alpha +2{\alpha}^2) -(x+ 1-3\alpha -\alpha n_2) \lambda_i\big(A_{\alpha}(G_1)\big)\bigg).
\end{multline}
Using \eqref{eq98} and \eqref{eq103} in \eqref{eq94},
we get
\begin{multline}\label{eq104}
    \det S = (x-2\alpha -\alpha n_2)^{m_1 -n_1 -1} \cdot F_1(x)\\
    \cdot \prod_{i=2}^{n_1}\bigg((x-\alpha r_1)(x-2\alpha -\alpha n_2)- r_1(1-3\alpha +2{\alpha}^2) -(x+ 1-3\alpha -\alpha n_2) \lambda_i\big(A_{\alpha}(G_1)\big)\bigg),
\end{multline}
where
\begin{multline*}
    F_1(x)
    =\Big(x-2\alpha -\alpha n_2 - m_1(1-\alpha)^2\Gamma_{A_{\alpha}(G_2)}(x-\alpha m_1) \Big) \cdot \Big( (x-\alpha r_1)(x-2\alpha -\alpha n_2) \\
    -r_1(x-6\alpha +2{\alpha}^2 -\alpha n_2 +2) \Big) - {r_1}^2n_1(1-\alpha)^4 \Gamma_{A_{\alpha}(G_2)}(x-\alpha m_1).
\end{multline*}
 Since $G_1$ is an $r_1$-regular graph with $n_1$ vertices and $m_1$ edges, $r_1 n_1 =2m_1$. Using this in the expression of $F_1(x)$ and then simplifying it, we get
\begin{multline*}
    F_1(x) = (x-2\alpha -\alpha n_2) \Big(x^2 -(2\alpha +\alpha n_2 +\alpha r_1 +r_1)x\\
    +{\alpha}^2r_1 n_2 + 6\alpha r_1 +\alpha r_1 n_2 -2r_1 -m_1(1-\alpha)^2(x-\alpha r_1 -r_1)\Gamma_{A_{\alpha}(G_2)}(x-\alpha m_1)\Big).
\end{multline*}
We use this expression in \eqref{eq104}. Using \eqref{eq87}, we get the required result.
\end{proof}

Now, in the following corollary, we obtain the $A_{\alpha}$-eigenvalues of $G_1 \langle \text{e} \rangle G_2$ taking $G_2$ as an $r_2$-regular graph.

\begin{corollary}\label{cor4.1}
    Let $G_1$ be an $r_1$-regular graph with $n_1$ vertices and $m_1$ edges, and $G_2$ be an $r_2$-regular graph on $n_2$ vertices.
    \begin{enumerate}[label={\upshape \arabic*.}]
        \item If $r_1 =1$, then for each $\alpha \in [0,1]$, the $A_{\alpha}$-spectrum of $G_1 \langle \emph{e} \rangle G_2$ consists precisely of:
        \begin{enumerate}[label= {\upshape (\roman*)}]
            \item $3\alpha -1$;
            \item $\alpha + \lambda_i\big( A_{\alpha}(G_2) \big)$, $i=2,3, \ldots, n_2$ and
            \item three roots of the equation $F(x)=0$, where
            \begin{multline*}
                F(x)=(x-\alpha -r_2)\big( x^2 -(3\alpha+\alpha n_2 +1)x + {\alpha}^2 n_2 +6\alpha + \alpha n_2 -2 \big) \\
                - n_2(1-\alpha)^2(x-\alpha -1) .
            \end{multline*}
        \end{enumerate}

        \item If $r_1 \geq 2$, then for each $\alpha \in [0,1]$, the $A_{\alpha}$-spectrum of $G_1 \langle \emph{e} \rangle G_2$ consists precisely of:
        \begin{enumerate}[label= {\upshape (\roman*)}]
            \item $\alpha (2+n_2)$, repeated $m_1 -n_1$ times;
            \item $\alpha m_1 + \lambda_i\big(A_{\alpha}(G_2)\big)$, $i=2,3, \ldots, n_2$;
            \item two roots of the equation $G_i(x) =0$ for each $i= 2,3,\ldots,n_1$, where
            \begin{multline*}
                G_i(x) = x^2 -\Big(2\alpha + \alpha r_1 +\alpha n_2 +\lambda_i\big(A_{\alpha}(G_1)\big)\Big)x\\
                + {\alpha}^2 r_1 n_2 +3\alpha r_1 -r_1 + (3\alpha + \alpha n_2 -1)\lambda_i\big(A_{\alpha}(G_1)\big)
            \end{multline*}
            and
            \item three roots of the equation $F(x)=0$, where
            \begin{multline*}
                F(x)=(x-\alpha m_1 -r_2)\\
                \cdot \Big( x^2 -(2\alpha +\alpha r_1 + \alpha n_2 + r_1)x + ({\alpha}^2 r_1 n_2 + 6\alpha r_1 + \alpha r_1 n_2 - 2r_1)\Big)\\
                -m_1n_2(1-\alpha)^2(x-\alpha r_1 -r_1) .
            \end{multline*}
        \end{enumerate}
    \end{enumerate}
\end{corollary}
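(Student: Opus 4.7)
The plan is to derive Corollary \ref{cor4.1} as a direct consequence of Theorem \ref{th4}, by exploiting the regularity of $G_2$ to evaluate the coronal explicitly and to peel off one factor from $\psi_{A_{\alpha}(G_2)}$. Since $G_2$ is $r_2$-regular, every row sum of $A_{\alpha}(G_2)$ equals $r_2$, so \eqref{eq6} yields
$$\Gamma_{A_{\alpha}(G_2)}(x-\alpha m_1) = \frac{n_2}{x-\alpha m_1 - r_2}.$$
Moreover, $r_2$ is an eigenvalue of $A_{\alpha}(G_2)$ (the all-ones vector is a common eigenvector of $D(G_2)$ and $A(G_2)$), hence
$$\psi_{A_{\alpha}(G_2)}(x-\alpha m_1) = \big(x-\alpha m_1 - r_2\big) \prod_{i=2}^{n_2}\big(x-\alpha m_1 - \lambda_i(A_{\alpha}(G_2))\big).$$

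Substituting these two expressions into the formula of Theorem \ref{th4} has the pleasant effect that the $(x-\alpha m_1 - r_2)$ factor coming from $\psi_{A_{\alpha}(G_2)}$ clears the lone denominator contributed by $\Gamma_{A_{\alpha}(G_2)}(x-\alpha m_1)$. Thus the trailing ``quadratic-plus-coronal'' factor of Theorem \ref{th4}, once multiplied by $(x-\alpha m_1-r_2)$, becomes the cubic
$$(x-\alpha m_1 - r_2)\Big(x^2 -(2\alpha + \alpha r_1 + \alpha n_2 + r_1)x + \alpha^2 r_1 n_2 + 6\alpha r_1 + \alpha r_1 n_2 - 2r_1\Big) - m_1 n_2 (1-\alpha)^2 (x-\alpha r_1 - r_1),$$
which is precisely the polynomial $F(x)$ appearing in item (iv) for $r_1\geq 2$ (and item (iii) for $r_1=1$). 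Simultaneously, the residual product $\prod_{i=2}^{n_2}\big(x-\alpha m_1 - \lambda_i(A_{\alpha}(G_2))\big)$ contributes the eigenvalues $\alpha m_1 + \lambda_i(A_{\alpha}(G_2))$, and the factor $\prod_{i=2}^{n_1}$ appearing in Theorem \ref{th4} gives exactly the quadratics $G_i(x)$.

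It remains to split by the value of $r_1$. For $r_1\geq 2$, the graph $G_1$ has no pendant vertices, so $m_1\geq n_1$; then $(x-2\alpha-\alpha n_2)^{m_1-n_1}$ genuinely contributes $m_1-n_1$ copies of the eigenvalue $\alpha(2+n_2)$, and a tally confirms $(m_1-n_1) + (n_2-1) + 2(n_1-1) + 3 = n_1+m_1+n_2$, matching the order of $A_{\alpha}(G_1\langle\mathrm{e}\rangle G_2)$. For $r_1=1$, connectedness forces $G_1=P_2$, so $n_1=2$, $m_1=1$, and $\lambda_2(A_{\alpha}(P_2))=2\alpha-1$; here $(x-2\alpha-\alpha n_2)^{m_1-n_1} = (x-2\alpha-\alpha n_2)^{-1}$, so to make sense of the expression one must exhibit a matching zero. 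The main technical obstacle, and really the only one, is to verify that the single quadratic $G_2(x)$ (with $r_1=1$, $\lambda_2(A_\alpha(G_1))=2\alpha-1$) has $x=\alpha(2+n_2)$ as a root; once this is confirmed by direct substitution, Vieta's formulas force the other root to be $(5\alpha+\alpha n_2-1) - (2\alpha+\alpha n_2) = 3\alpha-1$, which after cancellation becomes the isolated eigenvalue listed in item (i). Everything else is pure bookkeeping.
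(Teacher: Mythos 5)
Your proposal is correct and follows essentially the same route the paper takes for the analogous Corollary \ref{cor1.1} (the paper leaves Corollary \ref{cor4.1} itself without a written proof): substitute $\Gamma_{A_{\alpha}(G_2)}(x-\alpha m_1)=n_2/(x-\alpha m_1-r_2)$ via \eqref{eq6}, peel the factor $x-\alpha m_1-r_2$ off $\psi_{A_{\alpha}(G_2)}(x-\alpha m_1)$ to form the cubic $F(x)$, and split into the cases $G_1=P_2$ and $m_1\geq n_1$. Your explicit verification that $G_2(x)$ vanishes at $x=\alpha(2+n_2)$ when $r_1=1$, so that the negative power $(x-2\alpha-\alpha n_2)^{m_1-n_1}$ cancels and leaves the eigenvalue $3\alpha-1$, is a welcome extra care the paper glosses over.
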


Taking $G_2$ as $K_{p,q}$, we obtain the $A_{\alpha}$-eigenvalues of $G_1 \langle \text{e} \rangle G_2$ in the next corollary.

\begin{corollary}\label{cor4.2}
    Let $G_1$ be an $r_1$-regular graph on $n_1$ vertices and $m_1$ edges. Let $p,q \geq 1$ be integers and $G_2 = K_{p,q}$.
   \begin{enumerate}[label={\upshape \arabic*.}]
        \item If $r_1 =1$, then for each $\alpha \in [0,1]$, the $A_{\alpha}$-spectrum of $G_1 \langle \emph{e} \rangle G_2$ consists precisely of:
        \begin{enumerate}[label= {\upshape (\roman*)}]
            \item $3\alpha -1$;
            \item $\alpha (1+p)$, repeated $q-1$ times;
            \item $\alpha (1+q)$, repeated $p-1$ times and
            \item four roots of the equation $F(x)=0$, where
            \begin{multline*}
                F(x) = \Big(x^2 -(2\alpha + \alpha p + \alpha q +\alpha +1)x + {\alpha}^2p + {\alpha}^2q +6\alpha + \alpha p +\alpha q -2\Big) \\
                \cdot \Big( x^2 - (2\alpha +\alpha p +\alpha q)x + {\alpha}^2 + {\alpha}^2p + {\alpha}^2q +2\alpha pq -pq \Big)\\
                -(1-\alpha)^2(x-\alpha -1) \big( px +qx -\alpha p -\alpha q -\alpha p^2 -\alpha q^2 -2\alpha pq + 2pq \big).
            \end{multline*}
        \end{enumerate}

        \item If $r_1 \geq 2$, then for each $\alpha \in [0,1]$, the $A_{\alpha}$-spectrum of $G_1 \langle \emph{e} \rangle G_2$ consists precisely of:
        \begin{enumerate}[label= {\upshape (\roman*)}]
            \item $\alpha(2+p+q)$, repeated $m_1 -n_1$ times;
            \item $\alpha (m_1 +p)$, repeated $q-1$ times;
            \item $\alpha (m_1 +q)$, repeated $p-1$ times;
            \item two roots of the equation $G_i(x) = 0$ for each $i=2,3,\ldots,n_1$, where
            \begin{multline*}
                G_i(x) = x^2 - \Big(2\alpha +\alpha r_1 + \alpha p + \alpha q +\lambda_i\big(A_{\alpha}(G_1)\big)\Big)x \\
                +{\alpha}^2r_1p +{\alpha}^2r_1q +3\alpha r_1 - r_1 + (3\alpha +\alpha p +\alpha q -1)\lambda_i\big(A_{\alpha}(G_1)\big)
            \end{multline*}
            and
            \item four roots of the equation $F(x)=0$, where
            \begin{multline*}
                F(x)= \\
                \Big( x^2 -(2\alpha + \alpha r_1 + \alpha p+ \alpha q +r_1)x + {\alpha}^2r_1p + {\alpha}^2r_1q +6\alpha r_1 +\alpha r_1p + \alpha r_1q - 2r_1 \Big) \\
                \cdot \Big( x^2 -(2\alpha m_1 + \alpha p +\alpha q)x + {\alpha}^2m_1^2 + {\alpha}^2m_1p + {\alpha}^2m_1q +2\alpha pq -pq\Big)\\
                -m_1(1-\alpha)^2(x-\alpha r_1 -r_1)\big(px +qx -\alpha m_1p -\alpha m_1q -{\alpha}p^2 -{\alpha}q^2 -2\alpha pq +2pq \big).
            \end{multline*}
        \end{enumerate}
    \end{enumerate}
\end{corollary}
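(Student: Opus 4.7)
The plan is to specialize Theorem \ref{th4} to the case $G_2 = K_{p,q}$ by plugging in the $A_\alpha$-spectrum of $K_{p,q}$ (Lemma \ref{lem4}) and the $A_\alpha$-coronal of $K_{p,q}$ (Lemma \ref{lem6}), and then to separate the two natural subcases $r_1 = 1$ and $r_1 \geq 2$ exactly as in the proof of Corollary \ref{cor1.2}. With $n_2 = p+q$, Lemma \ref{lem4} gives
\[
\psi_{A_\alpha(K_{p,q})}(y) \;=\; (y - \alpha p)^{q-1}(y - \alpha q)^{p-1}\, Q(y),
\]
where $Q(y) = y^2 - \alpha(p+q)y + (2\alpha -1)pq$ is precisely the denominator appearing in $\Gamma_{A_\alpha(K_{p,q})}(y)$ by Lemma \ref{lem6}. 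Substituting $y = x - \alpha m_1$ into the characteristic polynomial produced by Theorem \ref{th4} immediately yields the eigenvalues $\alpha(m_1+p)$ with multiplicity $q-1$ and $\alpha(m_1+q)$ with multiplicity $p-1$.

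Next, I would combine the factor $\psi_{A_\alpha(G_2)}(x-\alpha m_1)$ with the final bracketed factor $L(x)$ of Theorem \ref{th4}. The point is that $L(x)$ contains $\Gamma_{A_\alpha(G_2)}(x-\alpha m_1)$, whose denominator is $Q(x - \alpha m_1)$; clearing this denominator and multiplying through by $Q(x-\alpha m_1)$ converts the product $Q(x-\alpha m_1)\cdot L(x)$ into a degree-$4$ polynomial in $x$, namely the quartic $F(x)$ listed in the statement. A short verification shows that the first quadratic factor of $F(x)$ is the polynomial part of $L(x)$ (with $n_2 = p+q$), the second quadratic factor of $F(x)$ is $Q(x-\alpha m_1)$ expanded out, and the subtracted term is exactly $m_1(1-\alpha)^2(x-\alpha r_1 - r_1)$ times the numerator $(p+q)(x-\alpha m_1) - \alpha(p+q)^2 + 2pq$ of the coronal. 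This accounts for the four roots in item (v), and the remaining product $\prod_{i=2}^{n_1} G_i(x)$ is already in the form stated in (iv) after setting $n_2 = p+q$.

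For the case $r_1 \geq 2$ no further work is needed: Theorem \ref{th4} also contributes the factor $(x - 2\alpha - \alpha n_2)^{m_1 - n_1} = (x - \alpha(2+p+q))^{m_1 - n_1}$, which is already a polynomial since $m_1 \geq n_1$. The delicate bookkeeping will be in the subcase $r_1 = 1$, where $G_1 = P_2$, so $n_1 = 2$, $m_1 = 1$, and the exponent $m_1 - n_1 = -1$ forces a cancellation. Here $A_\alpha(P_2)$ has eigenvalues $1$ and $2\alpha - 1$, and substituting $\lambda_2(A_\alpha(G_1)) = 2\alpha-1$, $r_1 = 1$, $n_2 = p+q$ into $G_i(x)$ should factor it as $(x-(3\alpha-1))(x - \alpha(2+p+q))$; the second root then cancels the $(x - \alpha(2+p+q))^{-1}$, leaving the single eigenvalue $3\alpha - 1$ together with the quartic $F(x)$ obtained as above with $m_1 = 1$.

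The main obstacle I anticipate is purely algebraic: tracking the cancellation between $Q(x-\alpha m_1)$ in the denominator of $\Gamma_{A_\alpha(K_{p,q})}(x-\alpha m_1)$ and the corresponding factor of $\psi_{A_\alpha(K_{p,q})}(x-\alpha m_1)$, while simultaneously expanding $L(x)\cdot Q(x-\alpha m_1)$ and verifying that the result agrees coefficient-by-coefficient with the stated quartic $F(x)$. Once this identity is checked (and the auxiliary identity $r_1 n_1 = 2 m_1$ is used, exactly as in the last step of the proof of Theorem \ref{th4}), both items of the corollary follow directly from Theorem \ref{th4}.
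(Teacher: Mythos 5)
Your proposal is correct and follows essentially the same route the paper intends: the paper proves Corollary \ref{cor4.2} by specializing Theorem \ref{th4} to $G_2=K_{p,q}$ via Lemmas \ref{lem4} and \ref{lem6}, cancelling the denominator $Q(x-\alpha m_1)$ of the coronal against the matching quadratic factor of $\psi_{A_\alpha(K_{p,q})}(x-\alpha m_1)$, and then splitting into the cases $r_1=1$ (where $G_1=P_2$ and the factor $(x-\alpha(2+p+q))$ coming from $\lambda_2(A_\alpha(P_2))=2\alpha-1$ cancels the exponent $m_1-n_1=-1$) and $r_1\geq 2$ — exactly the argument given for Corollary \ref{cor1.2} and invoked by analogy here. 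Your identification of the two quadratic factors of $F(x)$ and of the cancellation $G_2(x)=(x-(3\alpha-1))(x-\alpha(2+p+q))$ checks out.
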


Finally, to conclude this section, we provide a construction of new pair of $A_{\alpha}$-cospectral graphs from a given pair of $A_{\alpha}$-cospectral graphs in the following corollary.

\begin{corollary}\label{cor4.3}
    \begin{enumerate}[label={\upshape \arabic*.}]
        \item Let $G_1$ and $G_2$ be two $A_{\alpha}$-cospectral regular graphs for $\alpha \in [0,1]$, and let $H$ be an arbitrary graph. Then the graphs $G_1\langle \emph{e} \rangle H$ and $G_2 \langle \emph{e} \rangle H$ are $A_{\alpha}$-cospectral.
        \item Let $H_1$ and $H_2$ be two $A_{\alpha}$-cospectral graphs with $\Gamma_{A_{\alpha}(H_1)}(x) = \Gamma_{A_{\alpha}(H_2)}(x)$ for $\alpha \in [0,1]$. If $G$ is a regular graph, then the graphs $G\langle \emph{e} \rangle H_1$ and $G \langle \emph{e} \rangle H_2$ are $A_{\alpha}$-cospectral.
    \end{enumerate}
\end{corollary}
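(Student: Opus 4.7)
The plan is to prove both parts of Corollary \ref{cor4.3} by direct appeal to Theorem \ref{th4}, exactly parallel to the argument sketched in Corollary \ref{cor1.3}. The strategy is to show that every quantity appearing in the product formula for $\psi_{A_\alpha(G_1 \langle \text{e} \rangle G_2)}(x)$ is determined by the $A_\alpha$-spectra (and in one case the $A_\alpha$-coronal) of the two factor graphs, so that equal inputs produce equal characteristic polynomials, hence equal $A_\alpha$-spectra.

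For part (1), suppose $G_1$ and $G_2$ are $A_\alpha$-cospectral regular graphs and let $H$ be arbitrary. I first need to observe that two $A_\alpha$-cospectral regular graphs have matching parameters in the sense used by Theorem \ref{th4}: equality of the spectra forces equality of the number of vertices (dimension of the matrix) and of the trace $\alpha \sum d_{G_i}(v)$, from which the common regularity $r$ is recovered, and therefore equality of the edge count via $m = rn/2$. With $(n_1, m_1, r_1)$ identical for both graphs and with all eigenvalues $\lambda_i(A_\alpha(\cdot))$ identical, every factor in the expression of Theorem \ref{th4} that involves $G_1$ is the same when we replace $G_1$ by $G_2$. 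Since the factors involving $H$ are untouched, the characteristic polynomials of $A_\alpha(G_1 \langle \text{e} \rangle H)$ and $A_\alpha(G_2 \langle \text{e} \rangle H)$ coincide.

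For part (2), suppose $H_1, H_2$ are $A_\alpha$-cospectral with $\Gamma_{A_\alpha(H_1)}(x) = \Gamma_{A_\alpha(H_2)}(x)$, and let $G$ be regular. Here the dependence of the formula in Theorem \ref{th4} on the second factor enters only through three items: the number of vertices $n_2$ of the second factor (equal since cospectrality forces the same order), the polynomial $\psi_{A_\alpha(H_i)}(x - \alpha m_1)$, and the coronal $\Gamma_{A_\alpha(H_i)}(x - \alpha m_1)$. All three are the same for $H_1$ and $H_2$ by hypothesis, so the characteristic polynomials of $A_\alpha(G \langle \text{e} \rangle H_1)$ and $A_\alpha(G \langle \text{e} \rangle H_2)$ agree.

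There is no real obstacle: the whole argument is a matter of recording that the data appearing on the right-hand side of Theorem \ref{th4} are spectral invariants (plus the coronal, in part (2)). The only mildly subtle point is the first observation in part (1) --- that $A_\alpha$-cospectrality of two \emph{regular} graphs pins down their common regularity and edge count --- but this is immediate from the spectrum of $A_\alpha$ of a regular graph, which equals $\alpha r I_n + (1-\alpha) A$, so that $r$ is read off as the largest eigenvalue (or from the trace), and $n$ is the size of the spectrum. After that, the result follows by inspection of Theorem \ref{th4}, so no further computation is needed.
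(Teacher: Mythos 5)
Your proposal is correct and follows the same route the paper takes for the analogous Corollary \ref{cor1.3} (and implicitly for Corollary \ref{cor4.3}): note that $A_{\alpha}$-cospectral regular graphs share the same order, regularity and edge count, then observe that every quantity in the formula of Theorem \ref{th4} is determined by these parameters together with the $A_{\alpha}$-spectra and, in part (2), the coronal. Your extra remark that the regularity should be read off from the largest eigenvalue rather than the trace (which degenerates at $\alpha=0$) is a sensible refinement of the paper's one-line justification.
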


\textbf{Acknowledgement:} Iswar Mahato and  M. Rajesh Kannan would like to thank  Department of Science and Technology, India, for financial support through the Early Carrier Research Award (ECR/2017/000643).

\bibliographystyle{amsplain}
\bibliography{bibliojoin}
\end{document}